% Author: __(AUTHORNAME)__
% organization: __(ORGANIZATIONNAME)__
% date: Mon Sep 10 2007
\documentclass[a4paper,twoside,bibtotoc,idxtotoc]{article}%{scrartcl} %{scrbook} %scrartcl    % Comments after  % are ignored
\usepackage{amsmath,amssymb,amsfonts,amsthm} % Typical maths resource packages
\usepackage{graphics,graphicx}                 % Packages to allow inclusion of graphics
\usepackage{color}                    % For creating coloured text and background
\usepackage{hyperref,fancyhdr}                 % For creating hyperlinks in cross references
\usepackage[all]{xy}
\usepackage[applemac]{inputenc}
\usepackage[ngerman,english]{babel}
\usepackage{url}
\usepackage{abstract}

\textwidth 125 mm
\textheight 195 mm
\parskip 1mm 

\oddsidemargin 1,5cm
\evensidemargin 1,5cm

\newtheorem{theorem}{Theorem}[section]
\newtheorem{proposition}[theorem]{Proposition}
\newtheorem{corollary}[theorem]{Corollary}
\newtheorem{lemma}[theorem]{Lemma}

\newtheorem{theorem-example}[theorem]{Theorem $\backslash$ Example}
\renewenvironment{proof}{\begin{sloppypar}\noindent{\bf Proof}}{\hfill$\blacksquare$\end{sloppypar}}
\theoremstyle{definition}
\newtheorem{definition}[theorem]{Definition}
\newtheorem{example}[theorem]{Example}

\newtheorem{remark}[theorem]{Remark}

\newtheorem{open problem}[theorem]{Open Problem}
\newtheorem{construction}[theorem]{Construction}
\newtheorem{notation}[theorem]{Notation}
\DeclareMathOperator{\Aut}{Aut}
\DeclareMathOperator{\Inn}{Inn}

\DeclareMathOperator{\im}{im}
\DeclareMathOperator{\id}{id}

\DeclareMathOperator{\M}{M}

\DeclareMathOperator{\Hom}{Hom}

\DeclareMathOperator{\Diff}{Diff}

\DeclareMathOperator{\U}{U}

\DeclareMathOperator{\pr}{pr}

\DeclareMathOperator{\Ext}{Ext}

\DeclareMathOperator{\Det}{Det}

\begin{document}

\title{\bf{On Noncommutative Principal Bundles with Finite Abelian Structure Group}}

\author{Stefan Wagner\\Universit\"at Hamburg,Fachbereich Mathematik\\Bundesstra\ss e 55 (Geomatikum), Germany\\ \url{stefan.wagner@uni-hamburg.de}}

\maketitle

\begin{abstract}
\noindent
Let $\Lambda$ be a finite abelian group. A dynamical system with transformation group $\Lambda$ is a triple $(A,\Lambda,\alpha)$, consisting of a unital locally convex algebra $A$, the finite abelian group $\Lambda$ and a group homomorphism $\alpha:\Lambda\rightarrow\Aut(A)$, which induces an action of $\Lambda$ on $A$. In this paper we present a new, geometrically oriented approach to the noncommutative geometry of principal bundles with finite abelian structure group based on such dynamical systems. 

\noindent
\emph{Keywords}: Noncommutative differential geometry, dynamical systems, (trivial) principal bundles with finite abelian structure group, (trivial) noncommutative principal bundles with finite abelian structure group, graded algebras, crossed-product algebras, factor systems.

\noindent
\emph{MSC2010}: {46L87, 55R10 (primary), 37B05, 17A60 (secondary)}
\end{abstract}

\pagenumbering{arabic}

\section{Introduction}

The correspondence between geometric spaces and commutative algebras is a familiar and basic idea of algebraic geometry. Noncommutative topology started with the famous Gelfand--Naimark Theorems: Every commutative C*-algebra is the algebra of continuous functions vanishing at infinity on a locally compact space and vice versa. In particular, a noncommutative C*-algebra may be viewed as ``the algebra of continuous functions vanishing at infinity'' on a ``quantum space''. The aim of noncommutative geometry is to develop the basic concepts of topology, measure theory and differential geometry in algebraic terms and then to generalize the corresponding classical results to the setting of noncommutative algebras. The question whether there is a way to translate the geometric concept of a fibre bundle to noncommutative geometry is quite interesting in this context. In the case of vector bundles a refined version of the Theorem of Serre and Swan \cite{Swa62} gives the essential clue: The category of vector bundles over a manifold $M$ is equivalent to the category of finitely generated projective modules over $C^{\infty}(M)$. This observation leads to a notion of noncommutative vector bundles and is the connection between the topological K-theory based on vector bundles and algebraic K-theory. For principal bundles, free and proper actions offer a good candidate for a notion of noncommutative principal bundles (see e.\,g.  \cite{BHS07, BDH13, Ellwood00, Ph09, SchWa15a, SchWa15b}). In a purely algebraic setting, the well-established theory of Hopf--Galois extensions provides a wider framework comprising coactions of Hopf algebras (e.\,g. \cite{Haj04,LaSu05,Sch04}). We also would like to mention the related notion of noncommutative principal torus bundles proposed by Echterhoff, Nest, and Oyono-Oyono \cite{ENOO09} (see also \cite{HaMa10}), which relies on a noncommutative version of Green's Theorem. A similar geometric approach based on transformation groups was developed by the author in \cite{Wa11b, Wa11d}.

As is well-known from classical differential geometry, the relation between locally and globally defined objects is important for many constructions and applications. For example, a principal bundle $(P,M,G,q,\sigma)$ can be considered as a geometric object that is glued together from local pieces which are trivial, i.e., which are of the form $U\times G$ for some open subset $U$ of $M$. Thus, a natural step towards a geometrically oriented theory of ``noncommutative principal bundles" is to describe the \emph{trivial} objects first, i.e., to determine and to classify the ``trivial" noncommutative principal bundles. The case of the $n$-torus, i.e., $G=\mathbb{T}^n$, was treated in \cite{Wa11a}:

\begin{definition}\label{Trivial NCP T^n-Bundles}
(Trivial noncommutative principal torus bundles). A (smooth) dynamical system $(A,\mathbb{T}^n,\alpha)$ is called a (\emph{smooth}) \emph{trivial noncommutative principal $\mathbb{T}^n$-bundle}, if each isotypic component $A_{\bf k}$, ${\bf k}\in\mathbb{Z}^n$, contains an invertible element.
\end{definition}

\noindent
A hint for the quality of this definition for trivial noncommutative principal torus bundles is the observation that they have a natural counterpart in the theory of Hopf--Galois extensions: In fact, up to a suitable completion, they correspond to the so-called cleft $\mathbb{C}[\mathbb{Z}^n]$-comodule algebras; a notion that is close, although in general not equivalent, to the triviality of a principal bundle:

\begin{remark}
(Relation to cleft Hopf--Galois extensions). Let $G$ be a group. An algebra $A$ is a $\mathbb{C}[G]$-comodule algebra if and only if $A$ is a $G$-graded algebra (cf. \cite[Lemma 4.8]{BlMo}). Moreover, we conclude from \cite[Example 2.1.4]{Sch04} that a $G$-graded algebra $A=\bigoplus_{g\in G}A_g$ is a Hopf--Galois extension (of $A_{1_G}$) if and only if $A$ is strongly graded, i.e., if $A_{g}A_{g'}=A_{gg'}$ for all $g,g'\in G$. Now, a short calculation shows that a $\mathbb{C}[G]$-comodule algebra $A$ is cleft if and only if each grading space $A_g$ contains an invertible element. For more background on Hopf--Galois extensions, in particular for the definition of cleft extensions, we refer to \cite[Section 2.2]{Sch04}.% or \cite[Appendix A]{Wa11b}.
\end{remark}

\noindent
Unfortunately, if $\Lambda$ is a finite abelian group, then the class of cleft $\mathbb{C}[\Lambda]$-comodule algebras do not extend the classical geometry of trivial principal $\Lambda$-bundles. 

Our approach is inspired by the following observation: Without loss of generality we may assume that $\Lambda=C_{n_1}\times\cdots\times C_{n_k}$, where $k\in\mathbb{N}$ and each $C_{n_i}$ denotes the cyclic group of order $n_i$. A principal $\Lambda$-bundle $(P,M,\Lambda,q,\sigma)$ is trivial if and only if it admits a trivialization map. Such a trivialization map consists basically of $k$ (smooth) functions $f_i:P\rightarrow C_{n_i}$ satisfying $f_i^{n_i}=1$ and $f_i(\sigma(p,\lambda))=f_i(p)\cdot\lambda_i$ for all $p\in P$ and $\lambda=(\lambda_1,\ldots,\lambda_k)\in\Lambda$. From an algebraical point of view this condition means that each isotypic component of the (naturally) induced dynamical system $(C^{\infty}(P),\Lambda,\alpha)$ contains an invertible element of (some prescribed) finite order.

In Section 2 we present a geometrically oriented approach to the noncommutative geometry of trivial principal $C_n$-bundles based on dynamical systems of the form $(A,C_n,\alpha)$. We will in particular see that this approach extends the classical geometry of trivial principal $C_n$-bundles: If $A=C^{\infty}(P)$ for some manifold $P$, then we recover a trivial principal $C_n$-bundle and, conversely, each trivial principal bundle $(P,M,C_n,q,\sigma)$ gives rise to a trivial noncommutative principal $C_n$-bundle of the form $(C^{\infty}(P),C_n,\alpha)$. 

While in classical (commutative) differential geometry there exists up to isomorphy only one trivial principal $C_n$-bundle over a given manifold $M$, %namely $(M\times\mathbb{T}^n,M,\mathbb{T}^n,q_M,\sigma_{\mathbb{T}^n})$, 
the situation completely changes in the noncommutative world. Indeed, Fourier decomposition shows that the underlying algebraic structure of a trivial noncommutative principal $C_n$-bundle $(A,C_n,\alpha)$ is the one of a so-called $(C_n,A^{C_n})$-crossed product algebra, which are described and classified in Appendix \ref{classcroprodalg}. Thus, given a unital algebra $B$ (which serves as a `` base"), the main goal of the third section is to provide a complete classification of $(C_n,B)$-crossed product algebras for which the associtated sequence (\ref{split}) of groups is split, i.e., to classify all ``algebraically" trivial noncommutative principal $C_n$-bundles with fixed point algebra $B$.

The goal of Section 4 is to extend the results of Section 2 to finite abelian groups (which are, up to an isomorphism, products of cyclic groups), i.e., to present a geometrically oriented approach to the noncommutative geometry of trivial principal bundles with finite abelian structure group based on dynamical systems of the form $(A,\Lambda,\alpha)$, where $\Lambda$ denotes a finite abelian group. Again, we will see that this approach extends the classical geometry of trivial principal $\Lambda$-bundles: If $A=C^{\infty}(P)$ for some manifold $P$, then we recover a trivial principal $\Lambda$-bundle and, conversely, each trivial principal bundle $(P,M,\Lambda,q,\sigma)$ gives rise to a trivial noncommutative principal $\Lambda$-bundle of the form $(C^{\infty}(P),\Lambda,\alpha)$. Moreover, we present a bunch of examples of trivial noncommutative principal $\Lambda$-bundles. In particular, given $n\in\mathbb{N}$, we will see that the matrix algebra $\M_n(\mathbb{C})$ carries the structure of a trivial noncommutative principal $C_n\times C_n$-bundle. Furthermore, the last part of Section 4 is dedicated to some ideas and problems concerning a classification theory of  `` algebraically" trivial noncommutative principal $\Lambda$-bundles (with a prescribed fixed point algebra $B$).

Again, let $\Lambda$ be a finite abelian group. Section 5 is finally devoted to a geometrically oriented approach to the noncommutative geometry of principal $\Lambda$-bundles. Since the freeness property of a group action is a local condition (cf. \cite[Remark 8.10]{Wa11d}), our main idea is inspired by the classical setting: Loosely speaking, a dynamical system $(A,\Lambda,\alpha)$ is called a noncommutative principal $\Lambda$-bundle, if it is ``locally" a trivial noncommutative principal $\Lambda$-bundle in the sense of Section \ref{TriPrinBundFinAbStrGr}. At this point a localization method for non-commutative algebras or, more generally, for dynamical systems enters the game (cf. \cite{Wa11d}). We prove that this approach extends the classical theory of principal $\Lambda$-bundles and present some noncommutative examples. In fact, we first show that each trivial noncommutative principal $\Lambda$-bundle carries the structure of a noncommutative principal $\Lambda$-bundle in its own right. We further show that examples of noncommutative principal $\Lambda$-bundles are provided by sections of algebra bundles with trivial noncommutative principal $\Lambda$-bundle as fibre, sections of algebra bundles which are pull-backs of principal $\Lambda$-bundles and sections of trivial equivariant algebra bundles.

Let $G$ be a group and $B$ be a unital algebra. A $(G,B)$-crossed product algebra is a $G$-graded unital algebra with $A_{1_G}=B$ and the additional property that each grading space contains an invertible element. For example, if $G$ is abelian and $B=\mathbb{C}$, then a $(G,B)$-crossed product algebra is the same as a $G$-quantum torus in the terminology of \cite{Ne07b}. In Appendix A we introduce a ``cohomology theory" for crossed product algebras, which is inspired by the classical cohomology theory of groups. The corresponding cohomology spaces are crucial for the classification of trivial noncommutative principal bundles with compact abelian structure group. A detailed discussion for the case $G=\mathbb{Z}^n$ can be found in \cite{Wa11a}. Finally, given a compact abelian group $G$, the last part of the appendix is devoted to a Landstad duality theorem for $C^*$-dynamical systems $(A,G,\alpha)$ with the property that each isotypic component contains an invertible element. We did not find such a result in the literature.

The present paper is part of a larger program with the intend of finding a geometric approach to noncommutative principal bundles (cf. \cite{Wa11a,Wa11b,Wa11d}). 

\section*{Preliminaries and Notations} All manifolds appearing in this paper are assumed to be finite-dimensional, paracompact, second countable and smooth. For the necessary background on (principal) bundles and vector bundles we refer to \cite{KoNo63}. All algebras are assumed to be associative and complex if not mentioned otherwise. Given an algebra $A$, we write $\Gamma_A:=\Hom_{\text{alg}}(A,\mathbb{C})\backslash\{0\}$ (with the topology of pointwise convergence on $A$) for the spectrum of $A$ and $\Aut(A)$ for the corresponding group of automorphisms in the category of $A$. Moreover, a dynamical system is a triple $(A,G,\alpha)$, consisting of a unital locally convex algebra $A$, a topological group $G$ and a group homomorphism $\alpha:G\rightarrow\Aut(A)$, which induces a continuous action of $G$ on $A$. We will also make use of concepts coming from classical group cohomology: If $G,H$ are groups and $p\in\mathbb{N}_0$, we say that a map $f:G^p\rightarrow H$ is normalized if
\[(\exists j)\,g_j={1_G}\,\,\,\Rightarrow\,\,\,f(g_1,\ldots,g_p)=1_H
\]and write $C^p(G,H)$ for the space of all normalized maps $G^p\rightarrow H$, the so called $p$-cochains. For a detailed background on group cohomology we refer to \cite[Chapter IV]{Ma95}. Finally, for $n\in\mathbb{N}$ we write 
\[C_n:=\{z\in\mathbb{C}^{\times}:\,z^n=1\}=\{\zeta^k:\,\zeta:=\exp(\frac{2\pi i}{n}),\,k=0,1,\ldots,n-1\}
\]for the cyclic subgroup of $\mathbb{T}$ of $n$-th roots of unity. 

\section{Trivial noncommutative principal $C_n$-bundles}

In this section we present a geometrically oriented approach to the noncommutative geometry of trivial principal $C_n$-bundles based on dynamical systems of the form $(A,C_n,\alpha)$. We will in particular see that this approach extends the classical geometry of trivial principal $C_n$-bundles: If $A=C^{\infty}(P)$ for some manifold $P$, then we recover a trivial principal $C_n$-bundle and, conversely, each trivial principal bundle $(P,M,C_n,q,\sigma)$ gives rise to a trivial noncommutative principal $C_n$-bundle of the form $(C^{\infty}(P),C_n,\alpha)$. 

\begin{notation} 
We recall that the map
\[\Psi:C_n\rightarrow\Hom_{\text{gr}}(C_n,\mathbb{T}),\,\,\,\Psi(\zeta^k)(\zeta):=\zeta^k
\]is an isomorphism of abelian groups. In the following we will identify the character group of $C_n$ with $C_n$ via the isomorphism $\Psi$. In particular, if $A$ is a unital locally convex algebra and $(A,C_n,\alpha)$ a dynamical system, then we write
\[A_k:=A_{\Psi(\zeta^k)}=\{a\in A:\alpha(\zeta).a=\Psi(\zeta^k)(\zeta)\cdot a=\zeta^k\cdot a\}
\]for the isotypic component corresponding to $\zeta^k$, $k=0,1,\ldots,n-1$.
\end{notation}

\begin{definition}\label{TNCPC_nB1}(Trivial noncommutative principal $C_n$-bundles).
A dynamical system $(A,C_n,\alpha)$ is called a \emph{trivial noncommutative principal $C_n$-bundle} if each isotypic component $A_k$ contains an invertible element $a_k$ satisfying $a_k^n=1_A$.
\end{definition}

\begin{remark}\label{TNCPC_nB2}
Let $(A,C_n,\alpha)$ be a dynamical system such that each isotypic component $A_k$, $k=0,1,\ldots,n-1$, contains an invertible element. Then the set
\[A^{\times}_h:=\bigcup_{0\leq k\leq n-1} A^{\times}_k
\]of homogeneous units is a subgroup of $A^{\times}$ containing $A_0^{\times}$ and we thus obtain the following short exact sequence
\begin{align}
1\longrightarrow A_0^{\times}\longrightarrow A^{\times}_h\stackrel{q}\longrightarrow C_n\longrightarrow 1\label{split}
\end{align}
of groups, where $q(a_k):=\zeta^k$. 
\end{remark}

\begin{proposition}\label{TNCPC_nB2,5}
A dynamical system $(A,C_n,\alpha)$ is a trivial noncommutative principal $C_n$-bundle if and only if the associtated sequence \emph{(}\ref{split}\emph{)} is split, i.e., if there is a group homomorphism $\sigma:C_n\rightarrow A^{\times}_h$ satisfying $q\circ\sigma=\id_{C_n}$. 
\end{proposition}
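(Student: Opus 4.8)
The plan is to prove both implications by explicitly translating between a choice of invertible elements $a_k \in A_k^\times$ with $a_k^n = 1_A$ and a group-homomorphic section $\sigma : C_n \to A_h^\times$. The key point throughout is that because $C_n$ is cyclic of order $n$, generated by $\zeta = \exp(2\pi i/n)$, a group homomorphism $\sigma : C_n \to A_h^\times$ satisfying $q \circ \sigma = \id_{C_n}$ is determined by a single element $u := \sigma(\zeta)$, subject to exactly two constraints: $u \in A_h^\times$ must satisfy $q(u) = \zeta$ (equivalently $u \in A_1^\times$), and $u$ must have order dividing $n$, i.e. $u^n = 1_A$.

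First I would prove the easy direction ($\Leftarrow$): given a splitting homomorphism $\sigma$, set $u := \sigma(\zeta) \in A_1^\times$. Then $u^n = \sigma(\zeta)^n = \sigma(\zeta^n) = \sigma(1_{C_n}) = 1_A$. For each $k \in \{0,1,\dots,n-1\}$ put $a_k := u^k = \sigma(\zeta^k)$; this lies in $A_k$ since $\alpha(\zeta).u^k = (\alpha(\zeta).u)^k = (\zeta u)^k = \zeta^k u^k$ (using that each $\alpha(\zeta)$ is an algebra homomorphism), it is invertible with inverse $u^{n-k}$, and $a_k^n = (u^n)^k = 1_A$. Hence $(A,C_n,\alpha)$ is a trivial noncommutative principal $C_n$-bundle in the sense of Definition \ref{TNCPC_nB1}. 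Note this direction does not even require the hypothesis of Remark \ref{TNCPC_nB2} as an extra assumption — the existence of $\sigma$ already forces each $A_k$ to contain an invertible element.

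Next I would prove the harder direction ($\Rightarrow$): assume each $A_k$ contains an invertible element $a_k$ with $a_k^n = 1_A$. In particular the hypothesis of Remark \ref{TNCPC_nB2} holds, so the sequence (\ref{split}) makes sense. Take $u := a_1 \in A_1^\times$; by assumption $u^n = 1_A$. Define $\sigma : C_n \to A_h^\times$ by $\sigma(\zeta^k) := u^k$ for $k = 0,1,\dots,n-1$. One must check this is well defined (it is, because $u^n = 1_A$ means $u^k$ depends only on $k \bmod n$), that it lands in $A_h^\times$ (indeed $u^k \in A_k^\times$ by the same computation $\alpha(\zeta).u^k = \zeta^k u^k$ as above), that it is a group homomorphism ($u^k u^\ell = u^{k+\ell}$, again reduced mod $n$ using $u^n = 1_A$), and finally that $q \circ \sigma = \id_{C_n}$, which is immediate from $q(u^k) = q(u)^k = \zeta^k$ since $q$ is a homomorphism and $q(u) = q(a_1) = \zeta$. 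This completes the proof.

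The only subtlety — and the step I expect to be the mildest obstacle rather than a genuine difficulty — is making sure that all the required properties of $\sigma$ in the ($\Rightarrow$) direction follow from the single relation $u^n = 1_A$ together with $u \in A_1^\times$; in particular that well-definedness and the homomorphism property are not circular. This is genuinely routine because $C_n$ is cyclic: the universal property of $\mathbb{Z}/n\mathbb{Z}$ says a homomorphism out of it into any group $H$ corresponds bijectively to an element of $H$ whose order divides $n$, and here $H = A_h^\times$ with the element $u$. One should also remark that $A_h^\times$ is a group by Remark \ref{TNCPC_nB2}, so that the codomain of $\sigma$ is legitimate. No other genuinely hard point arises.
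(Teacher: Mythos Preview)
Your proof is correct and follows essentially the same approach as the paper's: both directions hinge on the single element $a_1\in A_1^\times$ (resp.\ $u=\sigma(\zeta)$) and its powers, exploiting that $C_n$ is cyclic so a splitting homomorphism is determined by an element of $A_1^\times$ of order dividing $n$. Your write-up is in fact more detailed than the paper's (you explicitly verify well-definedness, that $u^k\in A_k$, and invoke the universal property of $\mathbb{Z}/n\mathbb{Z}$), but the underlying argument is identical.
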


\begin{proof}
\,\,\,($``\Rightarrow"$) For this direction we first choose an invertible element $a_1\in A_1$ satisfying $a_1^n=1_A$. Then a short calculation shows that the map
\[\sigma:C_n\rightarrow A^{\times}_h,\,\,\,\sigma(\zeta^k):=(a_1)^k
\]defines a group homomorphism splitting the associated sequence (\ref{split}) of groups.

($``\Leftarrow"$) Let $\sigma:C_n\rightarrow A^{\times}_h$ be a group homomorphism splitting the associated sequence (\ref{split}) of groups and put $a_k:=\sigma(\zeta)^k$. Then $a_k\in A_k$ is invertible by definition and satisfies $a_k^n=\sigma(\zeta)^{kn}=\sigma(\zeta^{kn})=\sigma(1)=1_A$.
\end{proof}

\begin{remark}
Let $(A,C_n,\alpha)$ be a trivial noncommutative principal $C_n$-bundle and $\sigma:C_n\rightarrow A^{\times}_h$ a group homomorphism which splits (\ref{split}). Then the map 
\[A_0^{\times}\rtimes_S C_n\rightarrow A^{\times}_h,\,\,\,(a_0,\zeta^k)\mapsto a_0\sigma(\zeta)^k
\]is an isomorphism of groups, where the semidirect product is defined by the homomorphism
\[S:=C_{A_0^{\times}}\circ\sigma:C_n\rightarrow \Aut(A_0^{\times})
\]and $C_{A_0^{\times}}:A^{\times}_h\rightarrow \Aut(A^{\times}_0)$ denotes the conjugation action of $A^{\times}_h$ on $A^{\times}_0$. 
\end{remark}

\begin{lemma}\label{TNCPC_nB3}
Let $(A,C_n,\alpha)$ be a dynamical system and $B:=A^{C_n}$. Then the following statements are equivalent:
\begin{itemize}
\item[\emph{(a)}]
There exist invertible elements $a_1\in A_1$ and $b_1\in B$ such that $a_1^n=b_1^n$.
\item[\emph{(b)}]
For each $k=0,1,\ldots,n-1$ there exist invertible element $a_k\in A_k$ and $b_k\in B$ such that $a_k^n=b_k^n$.
\end{itemize}
\end{lemma}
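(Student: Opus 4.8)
The implication (b)$\Rightarrow$(a) is immediate: simply specialize the statement in (b) to $k=1$. So the whole content lies in (a)$\Rightarrow$(b), and here I would argue by a direct construction using powers of the given elements. The key structural fact I will invoke is that the isotypic decomposition $A=\bigoplus_{k=0}^{n-1}A_k$ is a $C_n$-grading in the sense that $A_jA_\ell\subseteq A_{j+\ell}$, with indices read modulo $n$; this follows at once from $\alpha$ being a homomorphism into $\Aut(A)$. Consequently, if $a_1\in A_1$ is invertible, then for each $k\in\{0,1,\ldots,n-1\}$ the element $a_k:=a_1^k$ lies in $A_k$ (for $k=0$ this reads $a_0=1_A\in A_0=B$) and is invertible, being a finite product of invertible elements.

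For the base algebra side I would set $b_k:=b_1^k$. Since $B=A^{C_n}$ is a unital subalgebra of $A$ and $b_1\in B^\times$, each $b_k$ lies in $B$ and is invertible. It remains to verify $a_k^n=b_k^n$. Because powers of a single element commute, one has $a_k^n=(a_1^k)^n=a_1^{kn}=(a_1^n)^k$ and likewise $b_k^n=(b_1^n)^k$; the hypothesis $a_1^n=b_1^n$ then gives $a_k^n=(a_1^n)^k=(b_1^n)^k=b_k^n$, as required. (Note this is consistent with the grading, since $a_1^n\in A_{n}=A_0=B$.) This establishes (b) for all $k$ and completes the argument.

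There is essentially no serious obstacle here; the proof is a short computation once one observes that the hypothesis in (a) is ``stable under taking $k$-th powers''. The only points demanding a word of care are the bookkeeping of indices modulo $n$ (ensuring $a_1^k$ really sits in $A_k$ rather than in some shifted component) and the degenerate case $k=0$, where both constructed elements are just $1_A$. I do not anticipate needing any of the cohomological machinery of the appendix or the splitting result of Proposition \ref{TNCPC_nB2,5} for this lemma.
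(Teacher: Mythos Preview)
Your proof is correct and follows essentially the same approach as the paper: both directions are handled identically, with (a)$\Rightarrow$(b) proved by setting $a_k:=a_1^k$, $b_k:=b_1^k$ and checking $a_k^n=(a_1^n)^k=(b_1^n)^k=b_k^n$. Your additional remarks on the grading and the case $k=0$ are sound but not needed beyond what the paper records.
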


\begin{proof}
\,\,\,(a) $\Rightarrow$ (b): For $k=0,1,\ldots,n-1$ we simply put $a_k:=a_1^k$ and $b_k:=b_1^k$. Then $a_k\in A_k$ and $b_k\in B$ are both invertible and we have
\[a_k^n=a_1^{kn}=(a_1^n)^k=(b_1^n)^k=(b_1^k)^n=b_k^n.
\]

(b) $\Rightarrow$ (a): This direction is obvious.
\end{proof}

\begin{proposition}\label{TNCPC_nB4}
Let $(A,C_n,\alpha)$ be a dynamical system such that $B:=A^{C_n}$ is a central subalgebra of $A$. Then $(A,C_n,\alpha)$ is a trivial noncommutative principal $C_n$-bundle if and only if $(A,C_n,\alpha)$ satisfies one of the equivalent conditions of Lemma \ref{TNCPC_nB3}.
\end{proposition}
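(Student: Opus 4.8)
The plan is to reduce the statement, via Lemma \ref{TNCPC_nB3}, to condition (a): it suffices to prove that $(A,C_n,\alpha)$ is a trivial noncommutative principal $C_n$-bundle if and only if there exist invertible elements $a_1\in A_1$ and $b_1\in B$ with $a_1^n=b_1^n$. For the implication ``$\Rightarrow$'' there is essentially nothing to do: if $(A,C_n,\alpha)$ is a trivial noncommutative principal $C_n$-bundle, then Definition \ref{TNCPC_nB1} provides an invertible $a_1\in A_1$ with $a_1^n=1_A$, and choosing $b_1:=1_A\in B$ gives $a_1^n=1_A=b_1^n$. (Note that this direction uses neither the centrality of $B$ nor Lemma \ref{TNCPC_nB3}.)

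For the implication ``$\Leftarrow$'' I would start from invertible elements $a_1\in A_1$ and $b_1\in B$ with $a_1^n=b_1^n$ and manufacture an invertible element of $A_1$ whose $n$-th power is $1_A$. First I would observe that $b_1^{-1}\in B$: for every $\zeta\in C_n$ we have $\alpha(\zeta).b_1^{-1}=(\alpha(\zeta).b_1)^{-1}=b_1^{-1}$ because $\alpha(\zeta)$ is an algebra automorphism, so $b_1^{-1}\in A^{C_n}=B$; in particular $b_1^{-1}$ is central in $A$ by hypothesis. Next I would put $\widetilde a_1:=a_1b_1^{-1}\in A^{\times}$ and check that $\widetilde a_1\in A_1$, using that $b_1^{-1}$ is $\alpha$-invariant: $\alpha(\zeta).\widetilde a_1=(\alpha(\zeta).a_1)(\alpha(\zeta).b_1^{-1})=(\zeta\cdot a_1)b_1^{-1}=\zeta\cdot\widetilde a_1$. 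Finally, since $b_1^{-1}$ is central, $\widetilde a_1^{\,n}=(a_1b_1^{-1})^n=a_1^n b_1^{-n}=a_1^n(b_1^n)^{-1}=1_A$. Thus $A_1$ contains an invertible element whose $n$-th power equals $1_A$, and setting $a_k:=\widetilde a_1^{\,k}\in A_k$ (equivalently, invoking Proposition \ref{TNCPC_nB2,5} with the splitting homomorphism $\sigma(\zeta^k):=\widetilde a_1^{\,k}$) shows that $(A,C_n,\alpha)$ is a trivial noncommutative principal $C_n$-bundle.

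The only point where the standing hypothesis on $B$ genuinely enters is the computation $(a_1b_1^{-1})^n=a_1^nb_1^{-n}$, which needs $b_1^{-1}$ to commute with $a_1$; so the centrality of $B$ is the crux of the argument, whereas the verification that $b_1^{-1}$ lies in $B$ and that $\widetilde a_1$ is homogeneous of degree $1$ is routine. I therefore do not anticipate a serious obstacle beyond bookkeeping.
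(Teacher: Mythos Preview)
Your proof is correct and follows essentially the same route as the paper: for the nontrivial direction you form $\widetilde a_1=a_1b_1^{-1}$, use centrality of $B$ to get $\widetilde a_1^{\,n}=1_A$, and then split the sequence~(\ref{split}) via $\sigma(\zeta^k):=\widetilde a_1^{\,k}$, which is exactly the paper's argument. Your additional bookkeeping (verifying $b_1^{-1}\in B$, homogeneity of $\widetilde a_1$, and isolating precisely where centrality enters) only makes explicit what the paper leaves as a ``short calculation.''
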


\begin{proof}
\,\,\,($``\Rightarrow"$) This direction immediately follows from Definition \ref{TNCPC_nB1}.

($``\Leftarrow"$) For the other direction we first choose invertible elements $a_1\in A_1$ and $b_1\in B$ such that $a_1^n=b_1^n$. Then a short calculation shows that the map
\[\sigma:C_n\rightarrow A^{\times}_h,\,\,\,\sigma(\zeta^k):=(a_1b_1^{-1})^k
\]is a group homomorphism which splits the associated sequence (\ref{split}) of groups.
\end{proof}

%\begin{remark}\label{TNCPC_nB4,5}(An alternative definition).
%In view of Proposition \ref{TNCPC_nB4}, one could also define a \emph{trivial NCP $C_n$-bundle} to be a dynamical system $(A,C_n,\alpha)$ satisfying one of the equivalent conditions of Lemma \ref{TNCPC_nB3}. An important handicap of this definition is the fact that, in general, it is pretty hard to verify the existence of roots in the corresponding fixed point algebra $A^{C_n}$, whereas one can easily use tools from the extension theory of groups to verify if a given short exact sequence of groups is split. Moreover, we will see later that Definition \ref{TNCPC_nB1} also leads to a natural classification theory. 
%\end{remark}

\begin{lemma}\label{TNCPC_nB5}
If $A$ is a unital locally convex algebra and $(A,C_n,\alpha)$ a dynamical system such that each isotypic component $A_k$ contains an invertible element, then the induced map
\[\sigma:\Gamma_A\times C_n\rightarrow\Gamma_A,\,\,\,\chi.\zeta^k:=\sigma(\chi,\zeta^k):=\chi\circ\alpha(\zeta^k)
\]defines a free action of $C_n$ on the spectrum $\Gamma_A$ of $A$. 
\end{lemma}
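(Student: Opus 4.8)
The plan is to verify the two assertions of the lemma separately: first that $\sigma$ is indeed a group action, and second that this action is free. For the first part, note that $\chi\circ\alpha(\zeta^k)$ is again a nonzero algebra homomorphism $A\to\mathbb{C}$, since $\alpha(\zeta^k)\in\Aut(A)$; so $\sigma$ maps $\Gamma_A\times C_n$ into $\Gamma_A$. The action axioms $\chi.1=\chi$ and $(\chi.\zeta^k).\zeta^l=\chi.\zeta^{k+l}$ follow immediately from $\alpha$ being a group homomorphism into $\Aut(A)$. Continuity is with respect to the topology of pointwise convergence on $A$ and the discrete topology on $C_n$, hence is automatic once we know that each $\alpha(\zeta^k)$ is continuous on $A$ (which holds since $(A,C_n,\alpha)$ is a dynamical system, recalling that for a finite group continuity of the action is equivalent to continuity of each $\alpha(\zeta^k)$).

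The substance of the lemma is freeness, and here is where the hypothesis that each isotypic component $A_k$ contains an invertible element $a_k$ is used. Suppose $\chi\in\Gamma_A$ is fixed by $\zeta^k$, i.e.\ $\chi\circ\alpha(\zeta^k)=\chi$. I want to conclude $\zeta^k=1$, equivalently that $n\mid k$. Pick an invertible $a_1\in A_1$. Since $a_1$ is invertible in $A$ and $\chi$ is a nonzero algebra homomorphism, $\chi(a_1)\in\mathbb{C}^{\times}$ (a homomorphism sends units to units in $\mathbb{C}$, i.e.\ to nonzero scalars). Now compute: on the one hand $\chi(\alpha(\zeta^k).a_1)=\zeta^k\cdot\chi(a_1)$ because $a_1$ lies in the isotypic component $A_1$ and $\alpha(\zeta^k).a_1=(\zeta^k)^1 a_1=\zeta^k a_1$; on the other hand $\chi(\alpha(\zeta^k).a_1)=(\chi\circ\alpha(\zeta^k))(a_1)=\chi(a_1)$ by the fixed-point assumption. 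Hence $\zeta^k\cdot\chi(a_1)=\chi(a_1)$, and dividing by the nonzero scalar $\chi(a_1)$ gives $\zeta^k=1$, i.e.\ $k\equiv 0\pmod n$. This shows the stabilizer of every point of $\Gamma_A$ is trivial, so the action is free.

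The main (indeed only) obstacle worth flagging is making sure the invertibility hypothesis is genuinely invoked — without an invertible element in $A_1$ one could only test $\chi$ against elements of $A_1$ that might all lie in $\ker\chi$, in which case the argument collapses. The invertible element $a_1$ is precisely what guarantees a "witness" in $A_1$ on which $\chi$ does not vanish, forcing the eigenvalue relation $\zeta^k=1$. Everything else is a routine check of the action axioms and of the elementary fact that algebra homomorphisms to $\mathbb{C}$ send invertibles to nonzero scalars. It is also worth remarking that the same argument applied to $a_k\in A_k^{\times}$ would only yield $\zeta^{k^2}=1$ and so on, which is why one works with a generator $a_1$ of the isotypic "grading"; but since $A_1$ is assumed to contain an invertible element, this is available.
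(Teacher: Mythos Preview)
Your proof is correct and follows essentially the same approach as the paper: both verify the action axioms briefly and then establish freeness by evaluating a fixed character $\chi$ on an invertible element $a_1\in A_1$, using $\chi(a_1)\neq 0$ to cancel and conclude $\zeta^k=1$. Your write-up simply adds more detail (e.g., explicitly justifying $\chi(a_1)\in\mathbb{C}^{\times}$) and some closing commentary.
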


\begin{proof}
\,\,\,An easy observation shows that $\sigma$ defines an action of $C_n$ on the spectrum $\Gamma_A$. The crucial part is to verify the freeness of the map $\sigma$, i.e., to show that the stabilizer of each element of $\Gamma_A$ is trivial: For this, we first choose an invertible element $a_1\in A_1$. Now, let $\chi\in\Gamma_A$ and $0\leq l\leq n-1$ such that $\chi.\zeta^l=\chi\circ\alpha(\zeta^l)=\chi$. Then 
\[(\chi\circ\alpha(\zeta^l))(a_1)=\chi(\alpha(\zeta^l).a_1)=\zeta^{l}\cdot\chi(a_1)=\chi(a_1)
\]implies that $\zeta^{l}=1$ and thus in turn that $l=0$, which proves the freeness of the map $\sigma$.
\end{proof}

\begin{remark}\label{TNCPC_nB6}
If $P$ is a manifold, $p\in P$ and $\delta_p$ the corresponding point evaluation map on $C^{\infty}(P)$, then there is a unique smooth structure on the spectrum $\Gamma_{C^{\infty}(P)}$ of $C^{\infty}(P)$ for which the map
\[\Phi:P\rightarrow \Gamma_{C^{\infty}(P)},\,\,\,p\mapsto\delta_p
\]becomes a diffeomorphism. A proof of this statement can be found in \cite[Lemma 2.5]{Wa11c}.
\end{remark}

\begin{proposition}\label{TNCPC_nB7}
Let $P$ be a manifold and $(C^{\infty}(P),C_n,\alpha)$ a dynamical system such that each isotypic component $C^{\infty}(P)_k$ contains an invertible element. Then the map 
\[\sigma:P\times C_n\rightarrow P,\,\,\,(\delta_p,\zeta^k)\mapsto\delta_p\circ\alpha(\zeta^k),
\]where we have identified $P$ with the set of characters via the map $\Phi$ from Remark \ref{TNCPC_nB6}, is smooth and defines a free and proper action of $C_n$ on the manifold $P$. In particular, we obtain a principal bundle $(P,P/C_n,C_n,\pr,\sigma)$, where $\pr:P\rightarrow P/C_n$ denotes the canonical orbit map.
\end{proposition}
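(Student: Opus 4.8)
The statement has two parts: smoothness of the action together with its freeness, and its properness; once both are established, the final claim that we obtain a principal bundle $(P,P/C_n,C_n,\pr,\sigma)$ is the standard theorem that a free and proper smooth action of a (here finite, hence compact) Lie group yields a principal bundle, which we may simply cite from \cite{KoNo63}. So the real work is in the first sentence. The plan is to transport everything through the diffeomorphism $\Phi$ from Remark \ref{TNCPC_nB6}, under which $P$ is identified with $\Gamma_{C^{\infty}(P)}$ and the action becomes $\chi.\zeta^k=\chi\circ\alpha(\zeta^k)$, the map already considered in Lemma \ref{TNCPC_nB5}.

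First I would dispose of freeness: this is immediate from Lemma \ref{TNCPC_nB5}, which shows that the stabilizer of every point of $\Gamma_A$ is trivial, applied to $A=C^{\infty}(P)$; no extra argument is needed. Next, smoothness. Since $C_n$ is finite, it suffices to check that for each fixed $k$ the map $p\mapsto\delta_p\circ\alpha(\zeta^k)$ is a smooth self-map of $P$. Under the identification $\Phi$ this is the map $\sigma_k:\Gamma_{C^{\infty}(P)}\to\Gamma_{C^{\infty}(P)}$ given by precomposition with the algebra automorphism $\alpha(\zeta^k)$ of $C^{\infty}(P)$. Here I would invoke the standard fact (again available via \cite[Lemma 2.5]{Wa11c}, in whose circle of ideas Remark \ref{TNCPC_nB6} sits) that every unital algebra automorphism $\varphi$ of $C^{\infty}(P)$ is of the form $f\mapsto f\circ\psi$ for a unique diffeomorphism $\psi$ of $P$, and that the induced map on the (smoothly structured) spectrum is then exactly $\psi$ itself; consequently $\sigma_k$ corresponds to a diffeomorphism of $P$ and is in particular smooth. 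Thus $\sigma:P\times C_n\to P$ is smooth, being on each slice $P\times\{\zeta^k\}$ a diffeomorphism of $P$.

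For properness, I would use the characterization that an action of a topological group $G$ on a Hausdorff space $X$ is proper iff the map $X\times G\to X\times X$, $(x,g)\mapsto(x,x.g)$, is proper (a closed map with compact fibres when $G$ is compact). Since $C_n$ is finite, $P\times C_n\to P\times P$ is a finite disjoint union of graphs of diffeomorphisms of $P$; each such graph is a closed embedded submanifold of $P\times P$, the map onto it is a homeomorphism, and finite unions of closed sets are closed, so the map is closed, and its fibres are clearly finite. Hence the action is proper. (Alternatively: any continuous action of a compact group on a Hausdorff space is automatically proper — this is the slickest route and I would state it as such.) With freeness and properness in hand, the principal bundle structure $(P,P/C_n,C_n,\pr,\sigma)$ follows from the classical result, and $\pr$ is the orbit map by construction.

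\textbf{Main obstacle.} The only genuinely substantive point is the identification of algebra automorphisms of $C^{\infty}(P)$ with diffeomorphisms of $P$ and the compatibility of this identification with the spectrum picture of Remark \ref{TNCPC_nB6}; everything about freeness is already packaged in Lemma \ref{TNCPC_nB5}, and properness is essentially free because $C_n$ is finite. I therefore expect the write-up to be short, with the smoothness step the one requiring a careful (but routine) appeal to the cited automorphism–diffeomorphism correspondence.
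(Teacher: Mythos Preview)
Your proposal is correct and follows the same route as the paper: freeness is pulled from Lemma \ref{TNCPC_nB5}, smoothness and properness are disposed of using the finiteness of $C_n$, and the principal bundle statement is deduced from the standard quotient theorem. The only difference is one of detail: the paper compresses smoothness and properness into the single phrase ``since the group $C_n$ is finite, the map $\sigma$ is obviously smooth and proper,'' citing Tom Dieck's \emph{Topologie} for the quotient theorem, whereas you unpack the smoothness step via the automorphism--diffeomorphism correspondence for $C^{\infty}(P)$ and spell out why finite (hence compact) group actions are proper. Your elaboration is a legitimate justification of what the paper leaves implicit, not a different argument.
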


\begin{proof}
\,\,\,Since the group $C_n$ is finite, the map $\sigma$ is obviously smooth and proper. Its freeness follows from Lemma \ref{TNCPC_nB5}. Therefore, the Quotient Theorem implies that we obtain a principal bundle $(P,P/C_n,C_n,\pr,\sigma)$ (cf. \cite[Kapitel VIII, Satz 21.6]{To00}).
\end{proof}

%We now come to the main theorem of this section:

\begin{theorem}\label{TNCPC_nB8}\emph{(}Trivial principal $C_n$-bundles\emph{)}.
Let $P$ be a manifold. Then the following assertions hold:
\begin{itemize}
\item[\emph{(a)}]
If $(C^{\infty}(P),C_n,\alpha)$ is a trivial noncommutative principal $C_n$-bundle, then the corresponding principal bundle $(P,P/C_n,C_n,\pr,\sigma)$ of Proposition \ref{TNCPC_nB7} is trivial.
\item[\emph{(b)}]
Conversely, if $(P,M,C_n,q,\sigma)$ is a trivial principal $C_n$-bundle, then the corresponding dynamical system $(C^{\infty}(P),C_n,\alpha)$ defined by
\[\alpha: C_n\times C^{\infty}(P)\rightarrow C^{\infty}(P),\,\,\,\alpha(\zeta^k,f)(p):=(\zeta^k.f)(p):=f(\sigma(p,\zeta^k)),
\]is a trivial noncommutative principal $C_n$-bundle.
\end{itemize}
\end{theorem}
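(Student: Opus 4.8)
The plan is to prove each implication by translating between the geometric notion of a trivialization of a principal $C_n$-bundle and the algebraic notion of an invertible element of finite order in the isotypic component $A_1$. Throughout I would identify $P$ with its spectrum $\Gamma_{C^\infty(P)}$ via the diffeomorphism $\Phi$ of Remark \ref{TNCPC_nB6}, so that the action $\sigma$ of Proposition \ref{TNCPC_nB7} is literally the dual action $\delta_p.\zeta^k=\delta_p\circ\alpha(\zeta^k)$.

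For part (a), I would first note that by Proposition \ref{TNCPC_nB7} we already have a genuine principal bundle $(P,P/C_n,C_n,\pr,\sigma)$, so it remains only to produce a smooth section of $\pr$, or equivalently a smooth $C_n$-equivariant map $f\colon P\to C_n$ (equivariant in the sense $f(\sigma(p,\zeta^k))=f(p)\cdot\zeta^k$). Since $(C^\infty(P),C_n,\alpha)$ is a trivial noncommutative principal $C_n$-bundle, pick an invertible $a_1\in C^\infty(P)_1$ with $a_1^n=1$. For each $p$, $a_1(p)$ is a nonzero complex number with $a_1(p)^n=1$, hence $a_1(p)\in C_n$; thus $a_1$ \emph{is} a smooth function $f:=a_1:P\to C_n$. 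The condition $a_1\in C^\infty(P)_1$ means $\alpha(\zeta^k).a_1=\zeta^k\cdot a_1$, i.e.\ $a_1(\sigma(p,\zeta^k))=\zeta^k\cdot a_1(p)$, which is exactly the equivariance of $f$ (recall that under our identification of the character group of $C_n$ with $C_n$, the right translation on $C_n$ corresponds to multiplication by $\zeta^k$). Then $p\mapsto(\pr(p),f(p))$ is a bundle isomorphism $P\to (P/C_n)\times C_n$, so the bundle is trivial.

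For part (b), assume $(P,M,C_n,q,\sigma)$ is a trivial principal $C_n$-bundle with trivialization, which I would encode as a smooth equivariant map $f\colon P\to C_n$, $f(\sigma(p,\zeta^k))=f(p)\cdot\zeta^k$. First one checks routinely that the prescribed $\alpha$ is a well-defined continuous action of $C_n$ on $C^\infty(P)$ by algebra automorphisms, so $(C^\infty(P),C_n,\alpha)$ is a dynamical system. Now set $a_1:=f\in C^\infty(P)$, viewing $C_n\subset\mathbb{C}^\times$. Since $|f(p)|=1$ for all $p$, $a_1$ is invertible in $C^\infty(P)$ (with inverse $\bar a_1$), and $a_1^n=f^n\equiv 1$ because $f$ takes values in $C_n$. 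Finally, $\bigl(\alpha(\zeta^k).a_1\bigr)(p)=a_1(\sigma(p,\zeta^k))=f(\sigma(p,\zeta^k))=f(p)\cdot\zeta^k=\zeta^k\cdot a_1(p)$, so $a_1\in C^\infty(P)_1$. By Lemma \ref{TNCPC_nB3} (or directly, setting $a_k:=a_1^k$) every isotypic component $C^\infty(P)_k$ contains an invertible element of order dividing $n$; hence $(C^\infty(P),C_n,\alpha)$ is a trivial noncommutative principal $C_n$-bundle.

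The only subtle point, and the one I would be most careful about, is the precise identification matching the equivariance $f(\sigma(p,\zeta^k))=f(p)\cdot\zeta^k$ of a classical trivialization with membership of $a_1$ in the isotypic component $A_1$; this rests on the convention (the isomorphism $\Psi$) that identifies the character $\Psi(\zeta)$ of $C_n$ with $\zeta$ itself, under which the ``weight $1$'' condition $\alpha(\zeta^k).a=\zeta^k\cdot a$ corresponds exactly to equivariance with respect to right translation on the fibre $C_n$. Everything else — well-definedness of $\alpha$, invertibility of unimodular smooth functions, and the passage from $a_1$ to the higher components via powers — is a direct computation already prepared by Lemma \ref{TNCPC_nB3} and Proposition \ref{TNCPC_nB7}.
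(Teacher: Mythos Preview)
Your proposal is correct and follows essentially the same argument as the paper: in both directions one passes between an invertible $f\in C^\infty(P)_1$ with $f^n=1$ and a $C_n$-valued equivariant map furnishing the bundle trivialization $p\mapsto(\pr(p),f(p))$. Your write-up is in fact a bit more explicit than the paper's (you spell out why $f$ lands in $C_n$ and why equivariance matches the isotypic condition), but the route is the same; note only that the reference to Lemma~\ref{TNCPC_nB3} is not quite the right citation for producing the higher $a_k$---your parenthetical ``directly, setting $a_k:=a_1^k$'' is exactly what is needed.
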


\begin{proof}
\,\,\,(a) If $(C^{\infty}(P),C_n,\alpha)$ is a trivial noncommutative principal $C_n$-bundle, then we may choose an invertible element \mbox{$f\in C^{\infty}(P)_1$} satisfying $f^n=1$ from which we conclude that $\im(f)=C_n$. In particular, the map
\[\varphi:P\rightarrow P/C_n\times C_n,\,\,\,p\mapsto(\pr(p),f(p))
\]defines an equivalence of principal $C_n$-bundles over $P/C_n$ implying that the principal bundle $(P,P/C_n,C_n,\pr,\sigma)$ of Proposition \ref{TNCPC_nB7} is trivial.

(b) Conversely, let $(P,M,C_n,q,\sigma)$ be a trivial principal $C_n$-bundle and
\[\varphi:P\rightarrow M\times C_n,\,\,\,p\mapsto(q(p),f(p))
\]be an equivalence of principal $C_n$-bundles over $M$. We first note that the function $f\in C^{\infty}(P)$ is invertible. Furthermore, the $C_n$-equivariance of $\varphi$ implies that $f\in C^{\infty}(P)_1$. Hence, $f\in C^{\infty}(P)_1$ is invertible and satisfies $f^n=1$. We thus conclude that $(C^{\infty}(P),C_n,\alpha)$ is a trivial noncommutative principal $C_n$-bundle.
\end{proof}

\section{Classification of trivial noncommutative\\principal $C_n$-bundles}\label{classoftrivnonC_n}

While in classical (commutative) differential geometry there exists up to isomorphy only one trivial principal $C_n$-bundle over a given manifold $M$, %namely $(M\times\mathbb{T}^n,M,\mathbb{T}^n,q_M,\sigma_{\mathbb{T}^n})$, 
the situation completely changes in the noncommutative world. Indeed, Fourier decomposition shows that the underlying algebraic structure of a trivial noncommutative principal $C_n$-bundle $(A,C_n,\alpha)$ is the one of a so-called $(C_n,A^{C_n})$-crossed product algebra, which are described and classified in Appendix\ref{classcroprodalg}. Thus, given a unital algebra $B$ (which serves as a `` base"), the main goal of this section is to provide a complete classification of $(C_n,B)$-crossed product algebras for which the associtated sequence (\ref{split}) of groups is split, i.e., to classify all ``algebraically" trivial noncommutative principal $C_n$-bundles with fixed point algebra $B$.

\begin{definition}\label{equivalence relation}
Let $B$ be a unital algebra. We say that two group homomorphisms $S,S':C_n\rightarrow\Aut(B)$ are equivalent and write in that case $S\sim S'$ if there exists an element $h\in C^1(C_n,B^{\times})$ satisfying the following two conditions:
\begin{itemize}
\item[(i)]
We have $S'=(C_B\circ h)\cdot S$, where $C_B:B^{\times}\rightarrow\Aut(B)$ denotes the conjugation action of $B^{\times}$ on $B$.
\item[(ii)]
The class $[d_Sh]\in H^2(C_n,Z(B)^{\times})_S$ (cf. \cite[Chapter IV, Section 4]{Ma95} for the corresponding definition) vanishes, where
\[d_Sh(\zeta^k,\zeta^l):=h(\zeta^k)S(\zeta^k)(h(\zeta^l))h(\zeta^{k+l})^{-1}.
\] 
\end{itemize}
\end{definition}

\begin{remark}\label{ClassTriNonCommPB1}
To see that the 2-cochain $d_Sh$ in Definition \ref{equivalence relation} (ii) is actually a 2-cocycle, we just have to note that condition (i) is equivalent to $\im(d_Sh)\subseteq Z(B)^{\times}$, which in turn implies that $d_Sh\in Z^2(C_n,Z(B)^{\times})_S$ (cf. \cite[Remark 2.20 (b)]{Ne07a}).
\end{remark}

\begin{lemma}\label{ClassTriNonCommPB2}
Let $B$ be a unital algebra. Then $\sim$ defines an equivalence relation on the set $\Hom_{\emph{\text{gr}}}(C_n,\Aut(B))$.
\end{lemma}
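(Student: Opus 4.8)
The plan is to verify the three defining properties of an equivalence relation directly. For reflexivity, I would take $h$ to be the constant $1$-cochain, i.e.\ $h(\zeta^k)=1_B$ for all $k$. Then $C_B\circ h$ is trivial, so condition (i) gives $S=(C_B\circ h)\cdot S$; and $d_Sh$ is the constant cocycle $1_B$, whose class in $H^2(C_n,Z(B)^{\times})_S$ is obviously trivial, so condition (ii) holds. Hence $S\sim S$.

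For symmetry, suppose $S\sim S'$ witnessed by $h\in C^1(C_n,B^{\times})$. The natural candidate to witness $S'\sim S$ is the ``inverse'' cochain $h'$ defined by $h'(\zeta^k):=S(\zeta^k)(h(\zeta^k))^{-1}$, or equivalently by rewriting condition (i). One computes that $C_B\circ h' = (C_B\circ h)^{-1}$ and that $S=(C_B\circ h')\cdot S'$, since $S'=(C_B\circ h)\cdot S$ implies $S=(C_B\circ h)^{-1}\cdot S' = (C_B\circ h')\cdot S'$ as maps into $\Aut(B)$. For condition (ii) I would check that $d_{S'}h'$ differs from the image of $d_Sh$ under the inversion map on $Z(B)^{\times}$ only by a coboundary; since $[d_Sh]=0$ in $H^2(C_n,Z(B)^{\times})_S$, and the module structures on $Z(B)^{\times}$ induced by $S$ and $S'$ agree (because $S$ and $S'$ differ by inner automorphisms, which act trivially on the center), it follows that $[d_{S'}h']=0$ in $H^2(C_n,Z(B)^{\times})_{S'}$. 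Thus $S'\sim S$.

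For transitivity, suppose $S\sim S'$ via $h$ and $S'\sim S''$ via $g$. The candidate witnessing $S\sim S''$ is the pointwise product $k(\zeta^j):=g(\zeta^j)h(\zeta^j)$. Condition (i) follows by composing the two relations: $S''=(C_B\circ g)\cdot S'=(C_B\circ g)\cdot(C_B\circ h)\cdot S=(C_B\circ k)\cdot S$, using that $C_B$ is a homomorphism $B^{\times}\to\Aut(B)$. For condition (ii) the key identity is a ``cocycle product rule'': $d_Sk$ should decompose, up to a coboundary and the identification of center-modules, as a product $d_{S'}g\cdot d_Sh$ (suitably interpreted with the correct $S'$- versus $S$-twist on $Z(B)^{\times}$, which again coincide because inner automorphisms act trivially on $Z(B)$). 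Since both $[d_{S'}g]$ and $[d_Sh]$ vanish in the respective $H^2$, so does $[d_Sk]$, giving $S\sim S''$.

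The main obstacle I expect is bookkeeping around condition (ii): one must be careful that the various $S$-twisted module structures on $Z(B)^{\times}$ are genuinely the same, which rests on the observation (essentially Remark \ref{ClassTriNonCommPB1}, via condition (i) forcing $\im(d_Sh)\subseteq Z(B)^{\times}$) that $S$, $S'$ and $S''$ induce identical actions on the center. Once that is pinned down, the cocycle identities $d_{S'}g\cdot d_Sh \equiv d_S(gh)$ modulo coboundaries, and the analogous inversion formula, are routine computations of the same flavor as \cite[Remark 2.20]{Ne07a}; I would state them as a short sublemma rather than grind through the indices in line.
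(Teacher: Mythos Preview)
Your overall plan coincides with the paper's: check reflexivity, symmetry, and transitivity directly, taking $h\equiv 1_B$ for reflexivity and the pointwise product $g\cdot h$ for transitivity. The transitivity identity you anticipate is precisely what the paper records, namely $d_S(g\cdot h)=d_{S'}g\cdot d_Sh$ (the paper writes it additively), and your remark that $S$, $S'$, $S''$ induce the same action on $Z(B)$ is the right way to identify the cohomology groups involved.

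There is, however, a slip in your symmetry step. Your candidate $h'(\zeta^k):=S(\zeta^k)(h(\zeta^k))^{-1}$ does \emph{not} satisfy $C_B\circ h'=(C_B\circ h)^{-1}$ in general: one has
\[
C_B\big(S(\zeta^k)(h(\zeta^k)^{-1})\big)=S(\zeta^k)\,C_B(h(\zeta^k)^{-1})\,S(\zeta^k)^{-1},
\]
which equals $C_B(h(\zeta^k))^{-1}$ only when $S(\zeta^k)$ commutes with $C_B(h(\zeta^k))$; hence your check of condition (i) breaks down. The paper uses the simpler pointwise inverse $h':=h^{-1}$, for which $C_B\circ h'=(C_B\circ h)^{-1}$ and thus $S=(C_B\circ h')\cdot S'$ are immediate. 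With this choice a direct computation (using $S'(\zeta^k)=C_B(h(\zeta^k))S(\zeta^k)$ and centrality of $d_Sh$) yields $d_{S'}(h^{-1})=(d_Sh)^{-1}$ on the nose, so $[d_{S'}h']$ vanishes whenever $[d_Sh]$ does; no coboundary correction is needed.
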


\begin{proof}
\,\,\,For the proof we have to check that $\sim$ is reflexive, symmetric and transitive:
\begin{itemize}
\item ``Reflexivity": We just take $h\equiv 1_B$.
\item ``Symmetrie": We take $h':=h^{-1}$. Then a short calculation shows that $d_{S'}h'=d_Sh$.
\item ``Transitivity": If $S\sim S'$ with $h\in C^1(C_n,B^{\times})$ and $S'\sim S''$ with \mbox{$h'\in C^1(C_n,B^{\times})$}, then we easily conclude $S\sim S''$ with $h'\cdot h\in C^1(C_n,B^{\times})$. In fact, another short calculation yields to $d_S(h'\cdot h)=d_{S'}h'+d_Sh$.
\end{itemize}
\end{proof}

\begin{definition}(Set of equivalence classes).\label{ClassTriNonCommPB3}
Let $B$ be a unital algebra. We write $\Ext(C_n,B)_{\text{split}}$ for the set of all equivalence classes of $(C_n,B)$-crossed product algebras for which the associtated sequence (\ref{split}) of groups is split.
\end{definition}

\begin{proposition}\label{ClassTriNonCommPB9}
Let $B$ be a unital algebra and $S:C_n\rightarrow\Aut(B)$ a group homomorphism. Then the $(C_n,B)$-crossed product algebra $A_S:=A_{(S,{\bf 1})}$ of Construction \ref{realization of TNCT^B from factor systems I} defines an element of $\Ext(C_n,B)_{\emph{\text{split}}}$.
\end{proposition}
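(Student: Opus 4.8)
The plan is to produce a single explicit invertible homogeneous element of degree one whose $n$-th power equals $1_{A_S}$, and then to invoke Proposition~\ref{TNCPC_nB2,5}.

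First I would recall from Construction~\ref{realization of TNCT^B from factor systems I} the concrete shape of the crossed product algebra attached to a factor system: $A_{(S,f)}$ is the $C_n$-graded algebra $\bigoplus_{k=0}^{n-1}Bu_k$ carrying distinguished homogeneous units $u_k\in(A_{(S,f)})_k$ with $u_0=1$, whose multiplication is governed by $u_kb=S(\zeta^k)(b)\,u_k$ for $b\in B$ together with $u_ku_l=f(\zeta^k,\zeta^l)\,u_{k+l}$ (indices taken mod $n$). For the factor system $(S,\mathbf{1})$ the second relation degenerates to $u_ku_l=u_{k+l}$; in particular each $u_k$ is a unit (with $u_k^{-1}=u_{n-k}$ for $1\leq k\leq n-1$ and $u_0$ self-inverse), so $A_S$ is a $(C_n,B)$-crossed product algebra with $(A_S)^{C_n}=B$, as already asserted in the statement.

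Next I would set $a_1:=u_1\in(A_S)_1$. It is invertible by the previous observation, and the relation $u_ku_l=u_{k+l}$ yields $a_1^k=u_k$ for every $k$, whence $a_1^n=u_n=u_0=1_{A_S}$. Thus the dynamical system $(A_S,C_n,\alpha)$ induced by the grading meets the requirement of Definition~\ref{TNCPC_nB1}, so Proposition~\ref{TNCPC_nB2,5} shows that the associated sequence~(\ref{split}) is split; an explicit splitting is $\sigma:C_n\to(A_S)_h^{\times}$, $\sigma(\zeta^k):=a_1^k=u_k$. Therefore the equivalence class of $A_S$ lies in $\Ext(C_n,B)_{\text{split}}$.

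I do not anticipate a real obstacle here: everything reduces to the collapse of the twisting cocycle when $f=\mathbf{1}$. The only point worth a remark is that $\Ext(C_n,B)_{\text{split}}$ consists of \emph{equivalence classes} of crossed product algebras, so one should check that splitness of~(\ref{split}) is an invariant of the graded-isomorphism type and hence a well-defined property of classes rather than of representatives — but this is immediate once one notes that the splitting exhibited above is built purely from the grading and is transported by any graded isomorphism.
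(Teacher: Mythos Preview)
Your argument is correct and coincides with the paper's own proof: the paper simply notes that $\sigma(\zeta^k):=v_{\zeta^k}$ is a group homomorphism splitting the sequence~(\ref{split}), which is exactly your map $\sigma(\zeta^k)=u_k=a_1^k$. Your detour through Definition~\ref{TNCPC_nB1} and Proposition~\ref{TNCPC_nB2,5} is harmless extra packaging of the same observation, and your closing remark on invariance under graded isomorphism is a reasonable sanity check though not strictly required by the statement.
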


\begin{proof}
\,\,\,We just have to note that the map $\sigma:C_n\rightarrow A^{\times}_h$, $\sigma(\zeta^k):=v_{\zeta^k}$ defines a group homomorphism splitting the associated sequence (\ref{split}) of groups.
\end{proof}

\begin{theorem}\label{ClassTriNonCommPB4}
Let $B$ be a unital algebra. Then the map
\[\Phi:\Hom_{\emph{\text{gr}}}(C_n,\Aut(B))/\sim\,\rightarrow \Ext(C_n,B)_{\emph{\text{split}}},\,\,\,[S]\mapsto[A_S]
\]is a well-defined bijection.
\end{theorem}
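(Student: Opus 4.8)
The plan is to establish that $\Phi$ is well-defined, surjective, and injective in that order, using the classification of $(C_n,B)$-crossed product algebras from Appendix \ref{classcroprodalg} as the main technical input. Recall from Construction \ref{realization of TNCT^B from factor systems I} that every $(C_n,B)$-crossed product algebra $A$ carries, after a choice of homogeneous units $v_{\zeta^k}\in A_k^{\times}$ with $v_{1}=1_A$, an associated \emph{factor system}: the outer action $S:C_n\to\Aut(B)$ defined by $S(\zeta^k)(b):=v_{\zeta^k}bv_{\zeta^k}^{-1}$ and the $2$-cochain $f(\zeta^k,\zeta^l):=v_{\zeta^k}v_{\zeta^l}v_{\zeta^{k+l}}^{-1}\in B^{\times}$, which together satisfy the usual compatibility and cocycle-type identities; conversely $A$ is recovered up to isomorphism from $(S,f)$, and two factor systems give isomorphic crossed products precisely when they differ by a change of the $v_{\zeta^k}$, i.e.\ by an $h\in C^1(C_n,B^{\times})$ acting by $S\mapsto (C_B\circ h)\cdot S$ and $f\mapsto (d_S h)\cdot f$. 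The key point linking this to the statement is: the sequence (\ref{split}) for $A$ splits if and only if, after some such change of homogeneous units, one can arrange $f\equiv\mathbf 1$ — because a splitting $\sigma$ is exactly a system of homogeneous units that is multiplicative, and conversely multiplicative units force $f\equiv\mathbf 1$. (This is essentially Proposition \ref{TNCPC_nB2,5} together with the observation that $\sigma(\zeta)^k$ provides units with trivial factor system.)

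\textbf{Well-definedness.} Given $S\sim S'$ via $h\in C^1(C_n,B^{\times})$, I must show $[A_S]=[A_{S'}]$ in $\Ext(C_n,B)_{\text{split}}$. By Proposition \ref{ClassTriNonCommPB9} both $A_S$ and $A_{S'}$ lie in $\Ext(C_n,B)_{\text{split}}$. Applying the change of homogeneous units $h$ to $A_S=A_{(S,\mathbf 1)}$ produces the crossed product with factor system $(S', d_S h)$. Condition (ii) in Definition \ref{equivalence relation} says $[d_S h]=0$ in $H^2(C_n,Z(B)^{\times})_S$, i.e.\ $d_S h = d_{S'} h'$ for some $h'\in C^1(C_n,Z(B)^{\times})$ (here one uses that a coboundary in this cohomology is exactly $d_{S'}$ of a central cochain, and that $S$ and $S'$ act identically on $Z(B)$); hence a further change of units by $(h')^{-1}$ carries $(S',d_Sh)$ to $(S',\mathbf 1)$, so $A_S\cong A_{S'}=A_{(S',\mathbf 1)}=A_{S'}$ as $(C_n,B)$-crossed product algebras. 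Thus $\Phi$ descends to the quotient by $\sim$.

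\textbf{Surjectivity.} Let $[A]\in\Ext(C_n,B)_{\text{split}}$ with splitting $\sigma:C_n\to A_h^{\times}$. Put $v_{\zeta^k}:=\sigma(\zeta^k)=\sigma(\zeta)^k$; these are homogeneous units with $v_{\zeta^k}v_{\zeta^l}=v_{\zeta^{k+l}}$, so the associated factor system is $(S,\mathbf 1)$ for $S(\zeta^k)(b):=v_{\zeta^k}bv_{\zeta^k}^{-1}$, and $S$ is a genuine group homomorphism $C_n\to\Aut(B)$ because $\sigma$ is. By the reconstruction part of Appendix \ref{classcroprodalg}, $A\cong A_{(S,\mathbf 1)}=A_S$, so $[A]=\Phi([S])$.

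\textbf{Injectivity.} Suppose $\Phi([S])=\Phi([S'])$, i.e.\ $A_S$ and $A_{S'}$ are equivalent as $(C_n,B)$-crossed product algebras. By the equivalence criterion for factor systems there is $h\in C^1(C_n,B^{\times})$ with $S'=(C_B\circ h)\cdot S$ and $d_S h = \mathbf 1 \cdot (\text{change from }f=\mathbf 1\text{ to }f'=\mathbf 1)$, which forces $d_S h\in Z^2(C_n,Z(B)^{\times})_S$ to be a coboundary — more precisely, the factor system of $A_{S'}$ read off from the transported units is $(S', d_S h)$, and since the factor system of $A_{S'}$ in its own presentation is $(S',\mathbf 1)$, the two differ by a central cochain, giving $[d_S h]=0$ in $H^2(C_n,Z(B)^{\times})_S$. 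Hence conditions (i) and (ii) of Definition \ref{equivalence relation} hold and $S\sim S'$, so $[S]=[S']$. \emph{The main obstacle} I anticipate is the bookkeeping in injectivity and well-definedness around the cohomological condition (ii): one must be careful that an isomorphism of crossed product algebras is realized by a genuine change of homogeneous units (not merely an abstract algebra map), that the residual discrepancy after forcing $S=S'$ is captured exactly by a $Z(B)^{\times}$-valued coboundary with respect to the correct ($S$-twisted) module structure, and that $S$ and $S'$ induce the same action on $Z(B)$ so that $H^2(C_n,Z(B)^{\times})_S$ is unambiguous — all of which should follow cleanly from the results of Appendix \ref{classcroprodalg} but require citing them in the right combination.
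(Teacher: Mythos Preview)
Your proposal is correct and follows essentially the same three-step scheme (well-definedness, surjectivity, injectivity) as the paper's proof, invoking the same factor-system classification from Appendix~\ref{classcroprodalg}. The paper packages the cohomological bookkeeping slightly differently---it cites the free and transitive action of $H^2(C_n,Z(B)^{\times})_{[S]}$ on $\Ext(C_n,B)_{[S]}$ (Theorem~\ref{class of Z-kernels II}) to deduce $[d_Sh]=0$ directly from $[d_Sh].[A_{S'}]=[A_{S'}]$, whereas you unwind this by an explicit second change of units via a central cochain $h'$---but these are the same argument.
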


\begin{proof}
\,\,\,We divide the proof of this theorem into three parts:

(i) That the map $\Phi$ is well-defined is a consequence of Proposition \ref{ClassTriNonCommPB9} and \cite[Remark 2.20 (b)]{Ne07a}: In fact, if $S\sim S'$ with $h\in C^1(C_n,B^{\times})$, then we get
\[[A_S]=[A_{(S',d_Sh)}]=[d_Sh].[A_{S'}]=[A_{S'}]
\](cf. Theorem \ref{class of Z-kernels II}).

(ii) Next, we show that $\Phi$ is surjective: For this, let $[A]\in\Ext(C_n,B)_{\text{split}}$ and choose a group homomorphism $\sigma:C_n\rightarrow A^{\times}_h$ splitting the associated sequence (\ref{split}) of groups. Then $S:=C_B\circ\sigma:C_n\rightarrow\Aut(B)$ defines a group homomorphism satisfying $\Phi([S])=[A_S]=[A]$.

(iii) To see that $\Phi$ is injective, we choose $S,S'\in\Hom_{\text{gr}}(C_n,\Aut(B))$ with $[A_S]=[A_{S'}]$. Then there exists $h\in C^1(C_n,B^{\times})$ with $S'=(C_B\circ h)\cdot S$ what is equivalent to $\im(d_Sh)\subseteq Z(B)^{\times}$, which in turn implies that $d_Sh\in Z^2(C_n,Z(B)^{\times})_S$ (cf. \cite[Remark 2.20 (b)]{Ne07a}). Now, we conclude from
\[[A_S]=[A_{(S',d_Sh)}]=[d_Sh].[A_{S'}]=[A_{S'}]
\]that $[d_Sh]\in H^2(C_n,Z(B)^{\times})_S$ vanishes. Therefore, we obtain $S\sim S'$ (with $h\in C^1(C_n,B^{\times})$).
\end{proof}

\begin{corollary}\label{ClassTriNonCommPB5}
If $B$ is commutative, then the map 
\[\Phi:\Hom_{\emph{\text{gr}}}(C_n,\Aut(B))\rightarrow \Ext(C_n,B)_{\emph{\text{split}}},\,\,\,S\mapsto [A_S]
\]is a well-defined bijection.
\end{corollary}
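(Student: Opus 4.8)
The plan is to deduce Corollary \ref{ClassTriNonCommPB5} directly from Theorem \ref{ClassTriNonCommPB4} by showing that, when $B$ is commutative, the equivalence relation $\sim$ on $\Hom_{\text{gr}}(C_n,\Aut(B))$ collapses to equality, so that the quotient set $\Hom_{\text{gr}}(C_n,\Aut(B))/\!\sim$ can be identified with $\Hom_{\text{gr}}(C_n,\Aut(B))$ itself and the map $\Phi$ of the theorem descends to the map $S\mapsto[A_S]$ claimed here.

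First I would unwind the definition of $\sim$ (Definition \ref{equivalence relation}) in the commutative case. Since $B$ is commutative, we have $Z(B)=B$, so $B^{\times}=Z(B)^{\times}$, and the conjugation action $C_B:B^{\times}\to\Aut(B)$ is trivial, i.e.\ $C_B(b)=\id_B$ for all $b\in B^{\times}$. Consequently condition (i) in Definition \ref{equivalence relation}, namely $S'=(C_B\circ h)\cdot S$, forces $S'=S$ for any choice of $h\in C^1(C_n,B^{\times})$. Hence $S\sim S'$ already implies $S=S'$, and conversely $S\sim S$ holds by reflexivity (Lemma \ref{ClassTriNonCommPB2}); thus $\sim$ is the trivial (discrete) equivalence relation and the canonical projection $\Hom_{\text{gr}}(C_n,\Aut(B))\to\Hom_{\text{gr}}(C_n,\Aut(B))/\!\sim$ is a bijection.

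Next I would compose this identification with the bijection $\Phi$ from Theorem \ref{ClassTriNonCommPB4}. Under the identification $[S]\leftrightarrow S$, the map $[S]\mapsto[A_S]$ becomes precisely $S\mapsto[A_S]$, which is exactly the map asserted in the corollary; well-definedness and bijectivity are inherited immediately from Theorem \ref{ClassTriNonCommPB4}, with well-definedness being essentially vacuous here since there is nothing to check on equivalence classes. I would also remark, for completeness, that in the commutative case condition (ii) of Definition \ref{equivalence relation} is automatically satisfied whenever (i) is (for $S'=S$ one may take $h\equiv 1_B$, giving $d_Sh\equiv 1$), so no residual cohomological obstruction survives — this is consistent with the collapse of $\sim$ to equality.

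I do not anticipate a genuine obstacle in this argument; the only point requiring a little care is making explicit that commutativity of $B$ kills the conjugation action $C_B$, which is what trivializes condition (i) of the equivalence relation. Everything else is a formal transport of structure along the bijection of Theorem \ref{ClassTriNonCommPB4}. If one wanted the corollary to be self-contained rather than a formal corollary, one could instead re-run the three-part proof of Theorem \ref{ClassTriNonCommPB4} verbatim with $\sim$ replaced by $=$, but deriving it as a corollary is cleaner and is presumably the intended route.
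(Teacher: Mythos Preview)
Your proposal is correct and follows exactly the paper's own route: the paper's proof is the single sentence that the assertion follows from Theorem~\ref{ClassTriNonCommPB4} together with the fact that $S\sim S'$ iff $S=S'$ when $B$ is commutative. Your write-up simply spells out why commutativity kills $C_B$ and hence collapses condition~(i) to $S'=S$, which is precisely the content the paper leaves implicit.
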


\begin{proof}
\,\,\,The assertion immediately follows from Theorem \ref{ClassTriNonCommPB4} and the fact that in this case we have $S\sim S'$ if and only if $S=S'$.
\end{proof}

\begin{remark}\label{ClassTriNonCommPB10}
Let $\Lambda$ be an abelian group and $S:C_n\rightarrow\Aut(\Lambda)$ be a group homomorphism. Further, let $N\Lambda$ be the subgroup of $\Lambda$ generated by the elements of the form $\lambda+S(\zeta).\lambda+\ldots+S(\zeta^{n-1}).\lambda$ for $\lambda\in\Lambda$, i.e.,
\[N\Lambda:=\left\langle\lambda+S(\zeta).\lambda+\ldots+S(\zeta^{n-1}).\lambda:\,\lambda\in\Lambda\right\rangle.
\]Then we have
\[H^2(C_n,\Lambda)_S\cong \Lambda^{C_n}/N\Lambda.
\]In particular, if $S$ is trivial, then $H^2(C_n,\Lambda)\cong\Lambda/n\Lambda$ and thus $H^2(C_n,\Lambda)$ is trivial for divisible $\Lambda$. A nice reference for the previous discussion is \cite[Chapter IV, Section 7]{Ma95}.
\end{remark}

\begin{example}\label{ClassTriNonCommPB6}
If $B=\mathbb{C}$, then $\Aut(B)=\{\id_{\mathbb{C}}\}$. We thus conclude from Corollary \ref{ClassTriNonCommPB5} and Remark \ref{ClassTriNonCommPB10} that $\Ext(C_n,\mathbb{C})_{\text{split}}$ consists of a single element which is geometrically realized as the trivial principal $C_n$-bundle over a single point $\{\ast\}$, which algebraically corresponds to the group algebra $\mathbb{C}[C_n]$.
\end{example}

\begin{example}\label{ClassTriNonCommPB7}
If $B=C^{\infty}(M)$ for some manifold $M$, then $\Aut(B)\cong\Diff(M)$. We thus conclude from Corollary \ref{ClassTriNonCommPB5} that $\Ext(C_n,C^{\infty}(M))_{\text{split}}$ is classified by diffeomorphisms of the manifold $M$ of finite order $n$. In particular, the trivial principal $C_n$-bundle over $M$ corresponds to the trivial diffeomorphism, i.e., to $S={\bf 1}$.
\end{example}

\begin{example}\label{ClassTriNonCommPB8}
Let $B=\M_m(\mathbb{C})$ for some $m\in\mathbb{N}$. Then $Z(\M_m(\mathbb{C}))^{\times}\cong\mathbb{C}^{\times}$ and according to the well-known Skolem--Noether Theorem each automorphism of $\M_m(\mathbb{C})$ is inner. In particular, each group homomorphism $S':C_n\rightarrow\Inn(\M_m(\mathbb{C}))$ is equivalent to $S={\bf 1}$, since $H^2(C_n,\mathbb{C}^{\times})$ is trivial (cf. Remark \ref{ClassTriNonCommPB10}). From this we conclude that $\Ext(C_n,\M_m(\mathbb{C}))_{\text{split}}$ consists of a single element, which is realized by the group algebra $\M_m(\mathbb{C})[C_n]$ (with multiplication given by the usual convolution product).
\end{example}

\section{Trivial noncommutative principal bundles with finite abelian structure group}\label{TriPrinBundFinAbStrGr}

In the following let $\Lambda$ be a finite abelian group. The goal of this section is to present a geometrically oriented approach to the noncommutative geometry of trivial principal $\Lambda$-bundles based on dynamical systems of the form $(A,\Lambda,\alpha)$. Again, we will see that this approach extends the classical geometry of trivial principal $\Lambda$-bundles: If $A=C^{\infty}(P)$ for some manifold $P$, then we recover a trivial principal $\Lambda$-bundle and, conversely, each trivial principal bundle $(P,M,\Lambda,q,\sigma)$ gives rise to a trivial noncommutative principal $\Lambda$-bundle of the form $(C^{\infty}(P),\Lambda,\alpha)$.

\begin{notation} 
Let $\Lambda$ be a finite abelian group and let $\{\lambda_1,\ldots,\lambda_k\}$ be a set of generators of $\Lambda$ (having order $n_1,\ldots, n_k\in \mathbb{N}$) of minimal cardinality. To each such generator $\lambda_i$ %$\lambda_i$, $i=1,\ldots,k$, 
we associate an element $\widehat{\lambda}_i$ of its dual group $\widehat{\Lambda}:=\Hom_{\text{gr}}(\Lambda,\mathbb{T})$ by defining
\[\widehat{\lambda}_i(\lambda_j):=\begin{cases}
\zeta_i &\text{for}\,\,\,i=j\\
1 &\text{otherwise}.
\end{cases}
\]Here, the complex numbers $\zeta_i$ are defined for each $i=1,\ldots,k$ through
\[C_{n_i}:=\{z\in\mathbb{C}^{\times}:\,z^{n_i}=1\}=\left\{\zeta_i^l:\,\zeta_i:=\exp(\frac{2\pi i}{n_i}),\,l=0,1,\ldots,n_i-1\right\}.
\]%for $i=1,\ldots,k$. 
\end{notation}

\begin{definition}\label{C_nC_mI}(Trivial noncommutative principal $\Lambda$-bundles).
A dynamical system $(A,\Lambda,\alpha)$ is called a \emph{trivial noncommutative principal $\Lambda$-bundle} if there exists a set of generators $\{\lambda_1,\ldots,\lambda_k\}$ of $\Lambda$ (having order $n_1,\ldots, n_k\in \mathbb{N}$) of minimal cardinality such that each associated isotypic component
\[A_{\widehat{\lambda}_i}:=\{a\in A:\,(\forall\lambda\in\Lambda): \alpha(\lambda,a)=\widehat{\lambda}_i(\lambda)\cdot a\}
\]contains an invertible element $a_i$ satisfying $a_i^{n_i}=1_A$.
\end{definition}

\begin{remark}
Note that if $\Lambda$ is cyclic, i.e., if $\Lambda=C_n$ for some $n\in\mathbb{N}$, then Definition \ref{C_nC_mI} coincides with Definition \ref{TNCPC_nB1}.
\end{remark}

%\begin{remark}\label{new rem NCG}
%We shall call two dynamical systems $(A,\Lambda,\alpha)$ and $(A,\Lambda,\alpha')$ equivalent if there is a group isomorphism $\varphi:\Lambda\rightarrow\Lambda$ satisfying
%\[\alpha(\lambda,a)=\alpha'(\varphi(\lambda),a)
%\]for all $\lambda\in\Lambda$ and $a\in A$. Note that if $(A,\Lambda,\alpha)$ and $(A,\Lambda,\alpha')$ are equivalent dynamical systems, then the $\alpha$-isotypic component $A_{\widehat{\lambda}}$ coincides with the $\alpha'$-isotypic component $A_{\widehat{\lambda}\circ\varphi^{-1}}$, i.e., $$A^{\alpha}_{\widehat{\lambda}}=A^{\alpha'}_{\widehat{\lambda}\circ\varphi^{-1}}$$ holds for each $\widehat{\lambda}\in \widehat{\Lambda}$. In that case, $(A,\Lambda,\alpha)$ is a trivial noncommutative principal $\Lambda$-bundle if and only if $(A,\Lambda,\alpha')$ is a trivial noncommutative principal $\Lambda$-bundle.
%\end{remark}

\begin{remark}\label{C_nC_mII}
Let $(A,\Lambda,\alpha)$ be a dynamical system such that each isotypic component contains an invertible element. Then the set $A^{\times}_h$ of homogeneous units is a subgroup of $A^{\times}$ containing $A_0^{\times}$ and we thus obtain the following short exact sequence
\begin{align}
1\longrightarrow A_0^{\times}\longrightarrow A^{\times}_h\stackrel{q}\longrightarrow\widehat{\Lambda}\longrightarrow 1\label{split Lambda}
\end{align}
of groups, where $q(a_{\widehat{\lambda}}):=\widehat{\lambda}$ (whenever $\widehat{\lambda}\in\widehat{\Lambda}$ and $a_{\widehat{\lambda}}\in A_{\widehat{\lambda}}$). If now $\{\lambda_1,\ldots,\lambda_k\}$ is a set of generators of $\Lambda$ (having order $n_1,\ldots, n_k\in \mathbb{N}$) of minimal cardinality, then each of the natural inclusion maps $\iota_i:\widehat{\Lambda}_i\rightarrow\widehat{\Lambda}$ gives rise to a pull back extension 
\begin{align}
E_i:1\longrightarrow A_0^{\times}\longrightarrow A^{\times}_{h,i}\longrightarrow \widehat{\Lambda}_i\longrightarrow 1\label{pull back}
\end{align}
of (\ref{split Lambda}). Here, $\widehat{\Lambda}_i$ denotes the subgroup of $\widehat{\Lambda}$ generated by $\widehat{\lambda}_i$, i.e., $\widehat{\Lambda}_i:=\langle\widehat{\lambda}_i\rangle_{\text{gr}}$ (note that $\widehat{\lambda}_i$ has by definition order $n_i\in\mathbb{N}$) and
\[A^{\times}_{h,i}:=\bigcup_{0\leq l\leq n_i-1} A^{\times}_{\widehat{\lambda}_i^l}.
\]
\end{remark}

\begin{proposition}\label{C_nC_mIII}
A dynamical system $(A,\Lambda,\alpha)$ is a trivial noncommutative principal $\Lambda$-bundle if and only if each pull back extension $E_i$ \emph{(}\ref{pull back}\emph{)} of the associtated sequence \emph{(}\ref{split Lambda}\emph{)} of groups is split, i.e., if for each $i=1,\ldots,k$ there is a group homomorphism $\sigma_i:\widehat{\Lambda}_i\rightarrow A^{\times}_{h,i}$ satisfying $q\circ\sigma_i=\id_{\widehat{\Lambda}_i}$. 
\end{proposition}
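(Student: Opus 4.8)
The plan is to reduce this statement about $\Lambda$ to the cyclic case already handled in Proposition \ref{TNCPC_nB2,5}, by observing that the subgroup $\widehat{\Lambda}_i=\langle\widehat{\lambda}_i\rangle_{\text{gr}}$ is cyclic of order $n_i$ and that the restriction of the $\Lambda$-action to the relevant isotypic components behaves exactly like a $C_{n_i}$-action. First I would fix a set of generators $\{\lambda_1,\ldots,\lambda_k\}$ of minimal cardinality, so that $\widehat{\Lambda}_i$ is canonically identified with $C_{n_i}$ via $\widehat{\lambda}_i^l\mapsto\zeta_i^l$, and note that under this identification $A^{\times}_{h,i}=\bigcup_{0\le l\le n_i-1}A^{\times}_{\widehat{\lambda}_i^l}$ is precisely the group of homogeneous units of the $C_{n_i}$-graded structure on $A$ sitting inside $A^{\times}_h$; the extension $E_i$ in (\ref{pull back}) is then literally the sequence (\ref{split}) for this $C_{n_i}$-grading. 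Since $A_0=A^\Lambda\subseteq A^{\widehat{\lambda}_i}$-fixed points and $A_0^\times$ is the common kernel, the only thing to check is that $E_i$ really is a short exact sequence, which follows from Remark \ref{C_nC_mII} together with the argument of Remark \ref{TNCPC_nB2} applied to the $C_{n_i}$-grading.

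Granting that, the proof splits into the two implications. For ($\Rightarrow$): if $(A,\Lambda,\alpha)$ is a trivial noncommutative principal $\Lambda$-bundle, pick, for the distinguished generating set, invertible elements $a_i\in A_{\widehat{\lambda}_i}$ with $a_i^{n_i}=1_A$; then exactly as in the ($\Rightarrow$)-direction of Proposition \ref{TNCPC_nB2,5} the map $\sigma_i:\widehat{\Lambda}_i\to A^{\times}_{h,i}$, $\sigma_i(\widehat{\lambda}_i^l):=a_i^l$, is a well-defined group homomorphism (well-definedness using $a_i^{n_i}=1_A$ and $a_i$ homogeneous of degree $\widehat{\lambda}_i$) splitting $E_i$. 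For ($\Leftarrow$): given splittings $\sigma_i$ of each $E_i$, set $a_i:=\sigma_i(\widehat{\lambda}_i)$; then $a_i\in A^{\times}_{\widehat{\lambda}_i}$ is invertible by construction and $a_i^{n_i}=\sigma_i(\widehat{\lambda}_i)^{n_i}=\sigma_i(\widehat{\lambda}_i^{n_i})=\sigma_i(1_{\widehat{\Lambda}_i})=1_A$, so Definition \ref{C_nC_mI} is satisfied with the same generating set. This is word-for-word the cyclic argument, run once for each $i$.

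The one point that needs a little care — and what I expect to be the only real obstacle — is the quantifier over generating sets hidden in Definition \ref{C_nC_mI}: the definition asserts existence of \emph{some} minimal generating set with the invertibility property, whereas the statement of the proposition quantifies the $E_i$ over \emph{the} chosen generating set fixed in Remark \ref{C_nC_mII}. I would therefore be explicit that throughout we have fixed one minimal generating set $\{\lambda_1,\ldots,\lambda_k\}$ once and for all (as the Notation block does), so that both ``trivial noncommutative principal $\Lambda$-bundle'' and ``each $E_i$ splits'' are read relative to that same choice; the equivalence is then between these two statements for the fixed set, and no comparison across different generating sets is required. (If one instead wanted the genuinely basis-independent statement, one would additionally need that the existence of such $a_i$ for one minimal generating set forces it for every other, which is true but is a separate remark and not what this proposition claims.) Modulo this bookkeeping, the proof is a routine transcription of Proposition \ref{TNCPC_nB2,5} applied coordinatewise, so I would keep it short and simply refer back to that proof for the computations.
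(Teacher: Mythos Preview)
Your proposal is correct and follows essentially the same approach as the paper: for ($\Rightarrow$) define $\sigma_i(\widehat{\lambda}_i^l):=a_i^l$ using the given invertibles $a_i$ with $a_i^{n_i}=1_A$, and for ($\Leftarrow$) set $a_i:=\sigma_i(\widehat{\lambda}_i)$ and compute $a_i^{n_i}=\sigma_i(\widehat{\lambda}_i^{n_i})=1_A$. Your explicit remark about the quantifier over generating sets (that both sides of the equivalence are to be read relative to one fixed minimal generating set) is a point the paper leaves implicit, and is worth keeping.
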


\begin{proof}
\,\,\,($``\Rightarrow"$) For this direction we first choose for each $i=1,\ldots, k$ an invertible element $a_i\in A_{\widehat{\lambda}_i}$ satisfying $a_i^{n_i}=1_A$. Then a short calculation shows that the map
\[\sigma_i:\widehat{\Lambda}_i\rightarrow A^{\times}_{h,i},\,\,\,\sigma(\widehat{\lambda}_i^l):=(a_i)^l,\,\,\,l=0,\ldots,n_i-1
\]defines a group homomorphism splitting the pull back extension $E_i$.

($``\Leftarrow"$) For each $i=1,\ldots, k$, let $\sigma_i:\widehat{\Lambda}_i\rightarrow A^{\times}_{h,i}$ be a group homomorphism splitting the the pull back extension $E_i$ and put $a_i:=\sigma(\widehat{\lambda}_i)$. Then $a_i\in A_{\widehat{\lambda}_i}$ is invertible by definition and satisfies $$a_i^{n_i}=\sigma(\widehat{\lambda}_i)^{n_1}=\sigma(\widehat{\lambda}_i^{n_i})=
\sigma(1)=1_A.$$
\end{proof}

\begin{lemma}\label{C_nC_mIV}
If $A$ is a unital locally convex algebra and $(A,\Lambda,\alpha)$ a dynamical system such that each isotypic component contains an invertible element, then the induced map
\[\sigma:\Gamma_A\times\Lambda\rightarrow\Gamma_A,\,\,\,\chi.\lambda:=\sigma(\chi,\lambda):=\chi\circ\alpha(\lambda)
\]defines a free action of $\Lambda$ on the spectrum $\Gamma_A$ of $A$. 
\end{lemma}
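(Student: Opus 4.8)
\textbf{Proof plan for Lemma \ref{C_nC_mIV}.}

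The plan is to imitate the proof of Lemma \ref{TNCPC_nB5}, which handled the cyclic case, and to reduce the general finite abelian case to it via the chosen generators. First I would check that $\sigma$ is indeed an action: this is the routine computation $\chi.(\lambda\mu) = \chi\circ\alpha(\lambda\mu) = \chi\circ\alpha(\lambda)\circ\alpha(\mu) = (\chi.\lambda).\mu$, using that $\alpha$ is a group homomorphism into $\Aut(A)$, together with $\chi.1_\Lambda = \chi$; one also notes that $\chi\circ\alpha(\lambda)$ is again a nonzero algebra homomorphism, hence lies in $\Gamma_A$. The substantial point is freeness, i.e.\ that the stabilizer of every $\chi\in\Gamma_A$ is trivial.

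For freeness, fix a set of generators $\{\lambda_1,\ldots,\lambda_k\}$ of $\Lambda$ of minimal cardinality as in the Notation, so that every $\lambda\in\Lambda$ can be written $\lambda = \lambda_1^{m_1}\cdots\lambda_k^{m_k}$ with $0\le m_i\le n_i-1$, and for each $i$ choose an invertible element $a_i\in A_{\widehat{\lambda}_i}$, which exists by hypothesis. Now suppose $\chi\in\Gamma_A$ satisfies $\chi.\lambda = \chi$ for such a $\lambda$. Applying the identity $\chi\circ\alpha(\lambda) = \chi$ to $a_i$ and using $\alpha(\lambda).a_i = \widehat{\lambda}_i(\lambda)\cdot a_i = \zeta_i^{m_i}\cdot a_i$ (since $\widehat{\lambda}_i(\lambda_j)=\zeta_i$ for $j=i$ and $1$ otherwise, and $\widehat{\lambda}_i$ is a homomorphism) gives
\[
\zeta_i^{m_i}\cdot\chi(a_i) = \chi(\alpha(\lambda).a_i) = (\chi\circ\alpha(\lambda))(a_i) = \chi(a_i).
\]
Since $a_i$ is invertible and $\chi$ is unital, $\chi(a_i)\ne 0$, so $\zeta_i^{m_i}=1$, whence $n_i\mid m_i$ and therefore $m_i=0$ by the range constraint on $m_i$. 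As this holds for every $i=1,\ldots,k$, we get $\lambda = 1_\Lambda$, proving that the stabilizer of $\chi$ is trivial.

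I do not anticipate a genuine obstacle here: the argument is a direct generalization of Lemma \ref{TNCPC_nB5}, and the only mild subtlety is bookkeeping with the generators — one must be slightly careful that the isotypic-component condition for the \emph{generators} $\lambda_i$ (rather than for all of $\widehat{\Lambda}$) is exactly what is needed, since the characters $\widehat{\lambda}_i$ separate the coordinates $m_i$ of a general group element. One could alternatively phrase this more slickly by noting that the $\widehat{\lambda}_i$ generate $\widehat{\Lambda}$ and that $\chi.\lambda=\chi$ forces $\widehat{\mu}(\lambda)=1$ for all $\widehat{\mu}$ lying in isotypic components containing a unit; but since those $\widehat{\mu}$ include a generating set of $\widehat{\Lambda}$, and characters on a finite abelian group separate points, $\lambda=1_\Lambda$. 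Either route completes the proof.
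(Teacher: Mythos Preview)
Your proposal is correct and follows exactly the route the paper indicates: the paper's proof simply says the argument is similar to that of Lemma \ref{TNCPC_nB5} (the cyclic case) and points to \cite[Proposition 5.6]{Wa11c}, and you have written out precisely that generalization. Your alternative phrasing at the end --- using that the hypothesis gives invertibles in \emph{every} isotypic component, so $\widehat{\mu}(\lambda)=1$ for all $\widehat{\mu}\in\widehat{\Lambda}$ and hence $\lambda=1_\Lambda$ by Pontryagin duality --- is if anything cleaner than the coordinate argument with generators, but both are in the spirit of the paper's intended proof.
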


\begin{proof}
\,\,\,The proof of this assertion is similar to the proof of Lemma \ref{TNCPC_nB5}. Alternatively, we refer to \cite[Proposition 5.6]{Wa11c}.
\end{proof}

\begin{proposition}\label{C_nC_mV}
Let $P$ be a manifold and $(C^{\infty}(P),\Lambda,\alpha)$ a dynamical system such that each isotypic component contains an invertible element. Then the map 
\[\sigma:P\times\Lambda\rightarrow P,\,\,\,(\delta_p,\lambda)\mapsto\delta_p\circ\alpha(\lambda),
\]where we have identified $P$ with the set of characters via the map $\Phi$ from Remark \ref{TNCPC_nB6}, is smooth and defines a free and proper action of $\Lambda$ on the manifold $P$. In particular, we obtain a principal bundle $(P,P/\Lambda,\Lambda,\pr,\sigma)$, where $\pr:P\rightarrow P/\Lambda$ denotes the canonical orbit map.
\end{proposition}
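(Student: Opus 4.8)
The plan is to mimic the proof of Proposition \ref{TNCPC_nB7} almost verbatim, replacing $C_n$ by $\Lambda$ throughout, since the only facts about $C_n$ used there were that it is finite and that the induced action on the spectrum is free. First I would invoke Remark \ref{TNCPC_nB6} to identify $P$ diffeomorphically with $\Gamma_{C^\infty(P)}$ via $\Phi$, so that the formula $\sigma(\delta_p,\lambda):=\delta_p\circ\alpha(\lambda)$ indeed lands in $P$ and makes sense. Next I would observe that $\sigma$ is an action of $\Lambda$ on $P$: this is the statement that $\sigma$ restricted to $\Gamma_A$ is the action from Lemma \ref{C_nC_mIV}, together with the fact that $\Phi$ intertwines it, and it is a routine check that the composition $\chi\mapsto\chi\circ\alpha(\lambda)$ respects the group law $\alpha(\lambda)\alpha(\mu)=\alpha(\lambda\mu)$.

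The smoothness of $\sigma$ is immediate because $\Lambda$ is finite: for each fixed $\lambda\in\Lambda$ the map $p\mapsto\delta_p\circ\alpha(\lambda)$ is smooth (it is the pullback of the diffeomorphism $\Gamma_{\alpha(\lambda)}$ of $\Gamma_{C^\infty(P)}$ under $\Phi$, which is smooth since $\alpha(\lambda)$ is a continuous algebra automorphism and $\Phi$ is a diffeomorphism), and a map on $P\times\Lambda$ with $\Lambda$ discrete is smooth iff each of its restrictions $P\times\{\lambda\}$ is. Properness is equally automatic: any action of a finite (hence compact) group on a Hausdorff space is proper, so no argument beyond citing this is needed. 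The freeness of $\sigma$ is exactly the content of Lemma \ref{C_nC_mIV} transported along $\Phi$: if $\delta_p\circ\alpha(\lambda)=\delta_p$ then $\chi:=\delta_p\in\Gamma_A$ is fixed by $\lambda$, hence $\lambda=1_\Lambda$ by that lemma.

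Having established that $\sigma$ is a smooth, free and proper action of the Lie group $\Lambda$ on the manifold $P$, I would conclude by the Quotient Theorem (as in Proposition \ref{TNCPC_nB7}, citing \cite[Kapitel VIII, Satz 21.6]{To00}) that the quotient $P/\Lambda$ carries a unique manifold structure for which $\pr:P\to P/\Lambda$ is a submersion, and that $(P,P/\Lambda,\Lambda,\pr,\sigma)$ is a principal $\Lambda$-bundle. I do not anticipate any genuine obstacle here: the proposition is a direct generalization of the already-proved $C_n$-case, and every ingredient (smoothness from discreteness, properness from compactness, freeness from Lemma \ref{C_nC_mIV}) is either trivial or available. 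The only point requiring a modicum of care is making sure the identification $\Phi$ from Remark \ref{TNCPC_nB6} is used consistently so that $\sigma$ is literally a map $P\times\Lambda\to P$ rather than an action on the abstract spectrum; once that bookkeeping is in place the proof is essentially two sentences.
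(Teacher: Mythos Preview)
Your proposal is correct and follows essentially the same approach as the paper: smoothness and properness from finiteness of $\Lambda$, freeness from Lemma \ref{C_nC_mIV}, and then the Quotient Theorem (with the same citation to \cite[Kapitel VIII, Satz 21.6]{To00}). The paper's proof is simply a terse two-sentence version of what you wrote out in detail.
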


\begin{proof}
\,\,\,Since the group $\Lambda$ is finite, the map $\sigma$ is obviously smooth and proper. The freeness of $\sigma$ is a consequence of Lemma \ref{C_nC_mIV}. Therefore, the Quotient Theorem implies that we obtain a principal bundle $(P,P/\Lambda,\Lambda,\pr,\sigma)$ (cf. \cite[Kapitel VIII, Satz 21.6]{To00}).
\end{proof}

\begin{theorem}\label{C_nC_mVI}\emph{(}Trivial principal $\Lambda$-bundles\emph{)}.
If $P$ is a manifold, then the following assertions hold:
\begin{itemize}
\item[\emph{(a)}]
If $(C^{\infty}(P),\Lambda,\alpha)$ is a smooth trivial NCP $\Lambda$-bundle, then the corresponding principal bundle $(P,P/\Lambda,\Lambda,\pr,\sigma)$ of Proposition \ref{C_nC_mV} is trivial.
\item[\emph{(b)}]
Conversely, if $(P,M,\Lambda,q,\sigma)$ is a trivial principal $\Lambda$-bundle, then the corresponding smooth dynamical system $(C^{\infty}(P),\Lambda,\alpha)$ 
defined by
\[\alpha:\Lambda\times C^{\infty}(P)\rightarrow C^{\infty}(P),\,\,\,\alpha(\lambda,f)(p):=(\lambda.f)(p):=f(\sigma(p,\lambda)),
\]is a trivial noncommutative principal $\Lambda$-bundle.
\end{itemize}
\end{theorem}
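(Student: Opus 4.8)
The plan is to reduce Theorem \ref{C_nC_mVI} to the cyclic case already handled in Theorem \ref{TNCPC_nB8}, exploiting the fact that $\Lambda$ decomposes via the fixed generating set $\{\lambda_1,\ldots,\lambda_k\}$ and that the isotypic components $A_{\widehat{\lambda}_i}$ are exactly the cyclic building blocks. For part (a), suppose $(C^{\infty}(P),\Lambda,\alpha)$ is a trivial NCP $\Lambda$-bundle. By Definition \ref{C_nC_mI} we may pick invertible $f_i\in C^{\infty}(P)_{\widehat{\lambda}_i}$ with $f_i^{n_i}=1$, so each $f_i$ takes values in $C_{n_i}$ and is equivariant in the sense $f_i(\sigma(p,\lambda))=\widehat{\lambda}_i(\lambda)\cdot f_i(p)$. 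Bundling these together, I would form
\[
\varphi:P\longrightarrow (P/\Lambda)\times\bigl(C_{n_1}\times\cdots\times C_{n_k}\bigr),\qquad p\longmapsto\bigl(\pr(p),f_1(p),\ldots,f_k(p)\bigr),
\]
and identify $C_{n_1}\times\cdots\times C_{n_k}$ with $\Lambda$ via the chosen generators. The equivariance of the $f_i$ together with the fact that $\widehat{\lambda}_i(\lambda_j)=\delta_{ij}\zeta_i$ shows that $(f_1,\ldots,f_k)$ intertwines the $\Lambda$-action on $P$ with right translation on $\Lambda$, hence $\varphi$ is a morphism of principal $\Lambda$-bundles over $P/\Lambda$ and therefore an isomorphism; this trivializes the bundle from Proposition \ref{C_nC_mV}.

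For part (b), let $(P,M,\Lambda,q,\sigma)$ be a trivial principal $\Lambda$-bundle with equivalence $\psi:P\to M\times\Lambda$, $\psi(p)=(q(p),g(p))$, where $g:P\to\Lambda$ satisfies $g(\sigma(p,\lambda))=g(p)\lambda$. Composing $g$ with the coordinate projections $\Lambda\to C_{n_i}$ (dual to the decomposition of $\Lambda$ along the minimal generating set) yields smooth maps $g_i:P\to C_{n_i}\subseteq\mathbb{C}^{\times}$; each $g_i$ is then an invertible element of $C^{\infty}(P)$, satisfies $g_i^{n_i}=1$, and the equivariance of $g$ translates into $g_i(\sigma(p,\lambda))=\widehat{\lambda}_i(\lambda)\cdot g_i(p)$, i.e. $g_i\in C^{\infty}(P)_{\widehat{\lambda}_i}$. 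Thus the hypotheses of Definition \ref{C_nC_mI} are met and $(C^{\infty}(P),\Lambda,\alpha)$ is a trivial noncommutative principal $\Lambda$-bundle.

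The bookkeeping to watch carefully is the identification of $\widehat{\Lambda}$ with $C_{n_1}\times\cdots\times C_{n_k}$ and the compatibility of the two ``coordinatizations'' of $\Lambda$: the one coming from the generators $\lambda_i$ and the dual one coming from the characters $\widehat{\lambda}_i$. Since $\Lambda$ is a finite abelian group and $\{\lambda_1,\ldots,\lambda_k\}$ is a minimal generating set, $\Lambda\cong C_{n_1}\times\cdots\times C_{n_k}$, and the $\widehat{\lambda}_i$ form the corresponding dual basis; the product map $(\widehat{\lambda}_1,\ldots,\widehat{\lambda}_k):\Lambda\to C_{n_1}\times\cdots\times C_{n_k}$ is then a group isomorphism, and this is precisely the identification under which equivariance of the tuple $(f_1,\ldots,f_k)$ becomes equivariance of $\varphi$. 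I expect the only real subtlety — and it is minor — to be verifying that the tuple of coordinate functions really does give a principal bundle morphism (equivalently, that the two $\Lambda$-actions match on the nose under the dual-basis identification); once that is in place, both directions follow by the same argument as in the cyclic case of Theorem \ref{TNCPC_nB8}, applied simultaneously to each cyclic factor.
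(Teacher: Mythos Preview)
Your proof is correct and, for part (a), essentially identical to the paper's: both pick invertible $f_i\in C^{\infty}(P)_{\widehat{\lambda}_i}$ with $f_i^{n_i}=1$, bundle them into a map $P\to (P/\Lambda)\times(C_{n_1}\times\cdots\times C_{n_k})$, and identify the product of cyclic groups with $\Lambda$ via the generators to obtain a bundle equivalence. For part (b) the paper takes a slightly more abstract route---it transports along the $\Lambda$-equivariant algebra isomorphism $C^{\infty}(P)\cong C^{\infty}(M\times\Lambda)$ and reads off the required elements from the Fourier decomposition $C^{\infty}(M\times\Lambda)=\bigoplus_{\widehat{\lambda}}\widehat{\lambda}\cdot C^{\infty}(M)$---whereas you construct the $g_i$ directly by composing the trivialization with the coordinate projections; since those projections coincide with the characters $\widehat{\lambda}_i$, the two arguments are really the same computation in different packaging.
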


\begin{proof}
\,\,\,(a) If $(C^{\infty}(P),\Lambda,\alpha)$ is a trivial noncommutative principal $\Lambda$-bundle, then there exist a set of generators $\{\lambda_1,\ldots,\lambda_k\}$ of $\Lambda$ (having order $n_1,\ldots, n_k\in \mathbb{N}$) of minimal cardinality and invertible elements $f_i\in C^{\infty}(P)_{\widehat{\lambda}_i}$ satisfying $f_i^{n_i}=1$. From this we immediately conclude that $\im(f_i)=C_{n_i}$. Moreover, we note that the map
\[\mu:C_{n_1}\times\cdots\times C_{n_k}\rightarrow\Lambda,\,\,\,(\zeta_1^{l_1},\ldots\zeta_k^{l_k})\mapsto \lambda_1^{l_1}\cdots\lambda_1^{l_k},
\]is an isomorphism of finite abelian groups. Now, a short observation shows that
\[\varphi:=\pr\times(\mu\circ(f_1,\ldots,f_k)):P\rightarrow P/\Lambda\times\Lambda
\]defines an equivalence of principal $\Lambda$-bundles over $P/\Lambda$ implying that the principal bundle $(P,P/\Lambda,\Lambda,\pr,\sigma)$ of Proposition \ref{C_nC_mV} is trivial.

(b) Conversely, let $(P,M,\Lambda,q,\sigma)$ be a trivial principal $\Lambda$-bundle and let $\varphi:P\rightarrow M\times \Lambda$ be an equivalence of principal $\Lambda$-bundles over $M$. Then $\varphi$ induces a $\Lambda$-equivariant isomorphism of unital locally convex algebras between $C^{\infty}(P)$ and $C^{\infty}(M\times\Lambda)$ and thus the statment follows from the observation that 
\[C^{\infty}(M\times\Lambda)=\bigoplus_{\widehat{\lambda}\in\widehat{\Lambda}}
\widehat{\lambda}\cdot C^{\infty}(M).
\]holds as a consequence of Fourier decomposition.
%\[\varphi:P\rightarrow M\times \Lambda,\,\,\,p\mapsto(q(p),f_1(p),\ldots,f_k(p))
%\]be an equivalence of principal $\Lambda$-bundles over $M$. We first note that each function $f_i\in C^{\infty}(P)$ is invertible. Furthermore, the $\Lambda$-equivariance of $\varphi$ implies that $f_i\in C^{\infty}(P)_i$. Therefore, $f_i\in C^{\infty}(P)_i$ is invertible with $f_i^{n_i}=1$. We thus conclude that $(C^{\infty}(P),\Lambda,\alpha)$ is a trivial noncommutative principal $\Lambda$-bundle.
\end{proof}

\section*{Some more examples} %of Trivial Noncommutative Principal Bundles with Finite Abelian Structure Group}

%Let $G:=C_{n_1}\times\cdots\times C_{n_k}$ be a $k$-fold product of cyclic groups.
%Again, let $\Lambda=C_{n_1}\times\cdots\times C_{n_k}$ be a finite product of finite cyclic groups. 
In this subsection we present a bunch of examples of trivial noncommutative principal $\Lambda$-bundles. For this we recall that the dual group $\widehat{\Lambda}$ also carries the structure of a finite abelian group. Moreover, given $n\in\mathbb{N}$, we will see that the matrix algebra $\M_n(\mathbb{C})$ carries the structure of a trivial noncommutative principal $C_n\times C_n$-bundle. 

\begin{construction}\label{l^1 spaces associated to dynamical systems again I}($\ell^1$-crossed products).
Let $(A,\Vert\cdot\Vert,^{*})$ be an involutive Banach algebra and $(A,\Lambda,\alpha)$ a dynamical system. Note that this means that $\Lambda$ acts by isometries of $A$. We write $F(\Lambda,A)$ for the vector space of functions $f:\Lambda\rightarrow A$ and define a multiplication on this space by
\[(f\star g)(\lambda):=\sum_{\lambda'\in\Lambda}f(\lambda')\alpha(\lambda',g(\lambda-\lambda')).
\]Moreover, an involution is given by
\[f^{*}(\lambda):=\alpha(\lambda,(f(-\lambda))^{*}).
\]These two operations are continuous for the $\ell^1$-norm 
\[\Vert f\Vert_1:=\sum_{\lambda\in\Lambda}\Vert f(\lambda)\Vert.
\]For consistency we write $\ell^1(A\rtimes_{\alpha} \Lambda)$ for the corresponding involutive Banach algebra.
\end{construction}

\begin{proposition}\label{continuous action again}
If $(A,\Vert\cdot\Vert,^{*})$ is an involutive Banach algebra and $(A,\widehat{\Lambda},\alpha)$ a dynamical system, then the map
\[\widehat{\alpha}:\Lambda\times\ell^1(A\rtimes_{\alpha} \widehat{\Lambda})\rightarrow\ell^1(A\rtimes_{\alpha}\widehat{\Lambda}),\,\,\,(\widehat{\alpha}(\lambda,f))(\widehat{\lambda}):=(\lambda.f)(\widehat{\lambda}):=\widehat{\lambda}(\lambda)\cdot f(\widehat{\lambda})
\]defines an action of $\Lambda$ on $\ell^1(A\rtimes_{\alpha}\widehat{\Lambda})$ by algebra automorphisms. In particular, the triple \[(\ell^1(A\rtimes_{\alpha} \widehat{\Lambda}),\Lambda,\widehat{\alpha})
\]defines a dynamical system.
\end{proposition}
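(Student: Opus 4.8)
The plan is to verify directly that $\widehat{\alpha}$ is a well-defined group action of $\Lambda$ on the involutive Banach algebra $\ell^1(A\rtimes_\alpha\widehat{\Lambda})$ by algebra automorphisms, and then invoke Construction \ref{l^1 spaces associated to dynamical systems again I} to conclude that the triple is a dynamical system. First I would observe that for fixed $\lambda\in\Lambda$ the map $f\mapsto\widehat{\alpha}(\lambda,f)$ is linear and isometric for $\Vert\cdot\Vert_1$, since each $\widehat{\lambda}(\lambda)$ lies on the unit circle $\mathbb{T}$; hence it is a bounded operator on $\ell^1(A\rtimes_\alpha\widehat{\Lambda})$ with bounded inverse $f\mapsto\widehat{\alpha}(\lambda^{-1},f)$. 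The group-homomorphism property $\widehat{\alpha}(\lambda_1\lambda_2,f)=\widehat{\alpha}(\lambda_1,\widehat{\alpha}(\lambda_2,f))$ and the identity $\widehat{\alpha}(1_\Lambda,f)=f$ both follow pointwise from $\widehat{\lambda}(\lambda_1\lambda_2)=\widehat{\lambda}(\lambda_1)\widehat{\lambda}(\lambda_2)$ and $\widehat{\lambda}(1_\Lambda)=1$, using that each $\widehat{\lambda}\in\widehat{\Lambda}$ is a group homomorphism into $\mathbb{T}$.

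Next I would check multiplicativity, i.e. $\widehat{\alpha}(\lambda,f\star g)=\widehat{\alpha}(\lambda,f)\star\widehat{\alpha}(\lambda,g)$. Evaluating the left-hand side at $\widehat{\lambda}\in\widehat{\Lambda}$ gives $\widehat{\lambda}(\lambda)\sum_{\widehat{\lambda}'}f(\widehat{\lambda}')\,\alpha\bigl(\widehat{\lambda}',g(\widehat{\lambda}-\widehat{\lambda}')\bigr)$, while the right-hand side gives $\sum_{\widehat{\lambda}'}\widehat{\lambda}'(\lambda)\,f(\widehat{\lambda}')\,\alpha\bigl(\widehat{\lambda}',(\widehat{\lambda}-\widehat{\lambda}')(\lambda)\,g(\widehat{\lambda}-\widehat{\lambda}')\bigr)$; since $\alpha(\widehat{\lambda}',\cdot)$ is linear, the scalar $(\widehat{\lambda}-\widehat{\lambda}')(\lambda)$ pulls out, and the two sides agree because $\widehat{\lambda}'(\lambda)\cdot(\widehat{\lambda}-\widehat{\lambda}')(\lambda)=\widehat{\lambda}(\lambda)$ by the homomorphism property of $\widehat{\lambda}$ written additively in $\widehat{\Lambda}$. (Here I am careful to use the group law of $\widehat{\Lambda}$ consistently with the additive notation in Construction \ref{l^1 spaces associated to dynamical systems again I}.) Compatibility with the involution, $\widehat{\alpha}(\lambda,f)^{*}=\widehat{\alpha}(\lambda,f^{*})$, is checked the same way: evaluating $\widehat{\alpha}(\lambda,f)^{*}$ at $\widehat{\lambda}$ yields $\alpha\bigl(\widehat{\lambda},\overline{(-\widehat{\lambda})(\lambda)}\,f(-\widehat{\lambda})^{*}\bigr)$, and since $\overline{(-\widehat{\lambda})(\lambda)}=\widehat{\lambda}(\lambda)$ (complex conjugation on $\mathbb{T}$ inverts, and $(-\widehat{\lambda})(\lambda)=\widehat{\lambda}(\lambda)^{-1}$), this equals $\widehat{\lambda}(\lambda)\,f^{*}(\widehat{\lambda})=\widehat{\alpha}(\lambda,f^{*})(\widehat{\lambda})$.

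Finally I would note that $\widehat{\alpha}$ is jointly continuous: since $\Lambda$ is finite it carries the discrete topology, so continuity of $\widehat{\alpha}$ reduces to continuity of each operator $\widehat{\alpha}(\lambda,\cdot)$, which was already established (it is even isometric). Assembling these facts shows that $\widehat{\alpha}:\Lambda\to\Aut\bigl(\ell^1(A\rtimes_\alpha\widehat{\Lambda})\bigr)$ is a group homomorphism inducing a continuous action, so $(\ell^1(A\rtimes_\alpha\widehat{\Lambda}),\Lambda,\widehat{\alpha})$ is a dynamical system. I do not expect a genuine obstacle here; the only point requiring care is bookkeeping between the multiplicative notation for $\Lambda$ and $\widehat{\Lambda}$ and the additive notation used in the convolution formula of Construction \ref{l^1 spaces associated to dynamical systems again I}, together with consistently exploiting that the values $\widehat{\lambda}(\lambda)$ are unimodular scalars so that they commute past everything and leave the $\ell^1$-norm invariant.
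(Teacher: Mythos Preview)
Your proof is correct and follows essentially the same approach as the paper, which simply refers to \cite[Lemma 2.6]{Wa11a} for the analogous direct verification. Your explicit check of multiplicativity, the group-action axioms, isometry, and continuity (plus the bonus involution compatibility) is exactly what that reference amounts to.
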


\begin{proof}
\,\,\,The proof of this lemma is similar to the proof of \cite[Lemma 2.6]{Wa11a}.
\end{proof}

\begin{example}\label{l^1 as example again}
If $(A,\Vert\cdot\Vert,^{*})$ is an involutive Banach algebra and $(A,\widehat{\Lambda},\alpha)$ a dynamical system, then the dynamical system $(\ell^1(A\rtimes_{\alpha}\widehat{\Lambda}),\Lambda,\widehat{\alpha})$ is a trivial NCP $\Lambda$-bundle. Indeed, given a set of generators $\{\lambda_1,\ldots,\lambda_k\}$ of $\Lambda$ (having order $n_1,\ldots, n_k\in \mathbb{N}$) of minimal cardinality, we define
\[\delta_i(\widehat{\lambda}):=
\begin{cases}
1_A &\text{for}\,\,\,\widehat{\lambda}=\widehat{\lambda}_i\\
0 &\text{otherwise}
\end{cases}.
\]Then 
\[\lambda.\delta_i=\widehat{\lambda}_i(\lambda)\cdot\delta_i,\,\,\text{and}\,\,\,\delta_i\star\delta_i^{*}=\delta_i^{*}\star\delta_i={\bf 1}
\]show that $\delta_i$ is an invertible element of $\ell^1(A\rtimes_{\alpha}\widehat{\Lambda})$ lying in the isotypic component $\ell^1(A\rtimes_{\alpha}\widehat{\Lambda})_{\widehat{\lambda}_i}$. From this observation and $\delta_i^{n_i}={\bf 1}$ we conclude that $\ell^1(A\rtimes_{\alpha}\widehat{\Lambda})$ is a trivial noncommutative principal $\Lambda$-bundle.
\end{example}

%\begin{definition}($C^{*}$-crossed products).\label{c-stern crossed product}
%f $(A,\Vert\cdot\Vert,^{*})$ is an involutive Banach algebra and $(A,\Lambda,\alpha)$ a dynamical system, then the enveloping $C^*$-algebra of $\ell^1(A\rtimes_{\alpha} \Lambda)$ is denoted by $C^*(A\rtimes_{\alpha} \Lambda)$ and is called the \emph{$C^{*}$-crossed product} associated to $(A,\Lambda,\alpha)$.
%\end{definition}

\begin{example}\label{crossed product as example again}
If $(A,\Vert\cdot\Vert,^{*})$ is an involutive Banach algebra and $(A,\widehat{\Lambda},\alpha)$ a dynamical system, then the action $\widehat{\alpha}$ of Proposition \ref{continuous action again} extends to a continuous action of $\Lambda$ on the enveloping $C^*$-algebra $C^*(A\rtimes_{\alpha}\widehat{\Lambda})$ by algebra automorphisms. For details we refer to the \cite{Ta74}. In particular, the corresponding dynamical system 
\[(C^*(A\rtimes_{\alpha}\widehat{\Lambda}),\Lambda,\widehat{\alpha})
\]is a trivial noncommutative $\Lambda$-bundle. This follows exactly as in Example \ref{l^1 as example again}.
\end{example}

\begin{remark}(2-cocycles).\label{2-cocycles}
A 2-cocycle on $\Lambda$ with values in the circle $\mathbb{T}$ is a map $\omega:\Lambda\times\Lambda\rightarrow\mathbb{T}$ satisfying $\omega(0_{\Lambda},0_{\Lambda})=1$ and
\[\omega(\lambda,\lambda')\omega(\lambda+\lambda',\lambda'')=\omega(\lambda,\lambda'+\lambda'')\omega(\lambda',\lambda'')
\]for all $\lambda,\lambda',\lambda''\in\Lambda$. We write $Z^2(\Lambda,\mathbb{T})$ for the space of all 2-cocycle on $\Lambda$ with values in $\mathbb{T}$.
\end{remark}

\begin{construction}\label{l^1 spaces associated to cocycles I}($\ell^1$-spaces associated to $2$-cocycles).
Let $(A,\Vert\cdot\Vert,^{*})$ be an involutive Banach algebra and $\omega$ a 2-cocycle in $Z^2(\Lambda,\mathbb{T})$. The involutive Banach algebra $\ell^1(A\times_{\omega}\Lambda)$ is defined similar to Construction \ref{l^1 spaces associated to dynamical systems again I} by introducing a twisted multiplication 
\[(f\star g)(\lambda):=\sum_{\lambda'\in\lambda}f(\lambda')g(\lambda-\lambda')\omega(\lambda',\lambda-\lambda')
\]and an involution
\[(f^*)(\lambda):=\overline{\omega(\lambda,-\lambda)}\cdot f(-\lambda).
\]In fact, the cocycle property ensures that the multiplication is associative. The enveloping $C^*$-algebra of $\ell^1(A\times_{\omega} \Lambda)$ is called the \emph{twisted $A$-valued group $C^*$-algebra of $\Lambda$ by $\omega$} and denoted by $C^*(A\times_{\omega} \Lambda)$.
\end{construction}

\begin{example}\label{l^1 twisted as example again}
Let $(A,\Vert\cdot\Vert,^{*})$ be an involutive Banach algebra and $\omega$ a 2-cocycle in $Z^2(\widehat{\Lambda},\mathbb{T})$. Similarly to Proposition \ref{continuous action again}, we see that the map
\[\widehat{\alpha}:\Lambda\times\ell^1(A\times_{\omega}\widehat{\Lambda})\rightarrow\ell^1(A\times_{\omega}\widehat{\Lambda}),\,\,\,(\widehat{\alpha}(\lambda,f))(\widehat{\lambda}):=(\lambda.f)(\widehat{\lambda}):=\widehat{\lambda}(\lambda)\cdot f(\widehat{\lambda})
\]defines an action of $\Lambda$ on $\ell^1(A\times_{\omega} \Lambda)$ by algebra automorphisms. Moreover, the corresponding dynamical system
\[(\ell^1(A\times_{\omega}\widehat{\Lambda})),\Lambda,\widehat{\alpha})
\]turns out to be a trivial noncommutative principal $\Lambda$-bundle (cf. Example \ref{l^1 as example again}).
\end{example}

\begin{example}\label{c-stern crossed product again}
The action $\widehat{\alpha}$ of Example \ref{l^1 twisted as example again} extends to an action of $\Lambda$ on $C^*(A\times_{\omega}\widehat{\Lambda})$ by algebra automorphisms (cf. Example \ref{crossed product as example again}). The corresponding dynamical system
\[(C^*(A\times_{\omega}\widehat{\Lambda}),\Lambda,\widehat{\alpha})
\]is a trivial noncommutative principal $\Lambda$-bundle as well.
\end{example}

\begin{example}(The matrix algebra).\label{the matrix algebra}
For $n\in\mathbb{N}$ and $\zeta:=\exp(\frac{2\pi i}{n})$ we define
$$
R:=\begin{pmatrix}
 1      &  &  &  &    \\
         & \zeta &  &  &  \\
         &  & \zeta^2  &  &  \\
         &  &  & \ddots &  \\
         &  &  &  &  \zeta^{n-1} 
\end{pmatrix}
\,\,\,\text{and}\,\,\,
S:=\begin{pmatrix}
 0      & \ldots & \ldots & 0 & 1   \\
 1      & 0 & \ldots & 0 &  0\\
         & \ddots & \ddots & \vdots & \vdots \\
         &  & \ddots& 0 & \vdots \\
 0      &  &  & 1 & 0 
\end{pmatrix}.
$$

\noindent
We note that the matrices $R$ and $S$ are unitary and satisfy the relations $R^n={\bf 1}$, $S^n={\bf 1}$ and $RS=\zeta\cdot SR$. Further, they generate a $C^*$-subalgebra which clearly commutes only with the multiples of the identity, so it has to be the full matrix algebra. The map
\[\alpha:(C_n\times C_n)\times\M_n(\mathbb{C})\rightarrow\M_n(\mathbb{C}),\,\,\, (\zeta^k,\zeta^l).A:=R^lS^kAS^{-k}R^{-l}
\]for $k,l=0,1,\ldots,n-1$ defines a smooth group action of $C_n\times C_n$ on $\M_n(\mathbb{C})$ with fixed point algebra $\mathbb{C}$. We now conclude from
\[R^*\in\M_n(\mathbb{C})_{(1,0)}:=\left\{A\in\M_n(\mathbb{C}):\,(\forall (\zeta^k,\zeta^l)\in C_n\times C_n)\,(\zeta^k,\zeta^l).A=\zeta^k\cdot A\right\}
\]and
\[S\in\M_n(\mathbb{C})_{(0,1)}:=\left\{A\in\M_n(\mathbb{C}):\,(\forall (\zeta^k,\zeta^l)\in C_n\times C_n)\,(\zeta^k,\zeta^l).A=\zeta^l\cdot A\right\}
\]that the triple $(\M_n(\mathbb{C}),C_n\times C_n,\alpha)$ is a smooth trivial noncommutative principal $C_n\times C_n$-bundle.
\end{example}

\section*{Some remarks on the classification of trivial noncommutative principal bundles with finite abelian structure group}

In Section \ref{classoftrivnonC_n} we gave a complete classification of all ``algebraically" trivial noncommutative principal $C_n$-bundles with a prescribed fixed point algebra $B$. Unfortunately, a similar classification theory for a general noncommutative principal $\Lambda$-bundles seems to be much more involved. As before, Fourier decomposition shows that the underlying algebraic structure of a trivial noncommutative principal $\Lambda$-bundle $(A,\Lambda,\alpha)$ is the one of a so-called $(\widehat{\Lambda},A^{\Lambda})$-crossed product algebra, which are described and classified in Appendix \ref{classcroprodalg}. Thus, given a unital algebra $B$ (which serves as a ``base"), the main goal of this short subsection is to discuss some ideas and problems concerning a complete classification of $(\Lambda,B)$-crossed product algebras for which the corresponding pull back extensions of the associtated sequence (\ref{split Lambda}) of groups is split, i.e., to classify all `` algebraically" trivial noncommutative principal $\Lambda$-bundles with fixed point algebra $B$ (cf. Remark \ref{C_nC_mII} and Proposition \ref{C_nC_mIII}). %We start with the following remark:

\begin{notation}\label{shit I}
Let $B$ be a unital algebra. Then $\Ext(\Lambda,B)$ denotes the set of all equivalence classes of $(\Lambda,B)$-crossed product algebras (cf. Definition \ref{ext shit}) and we recall that according to Theorem \ref{classification on TNCT^B}, the map
\[\chi:\Ext(\Lambda,B)\rightarrow H^2(\Lambda,B),\,\,\,[A]\mapsto\chi(A)=[(S,\omega)]
\]is a well-defined bijection, i.e., the cohomology space $H^2(\Lambda,B)$ from Subsection \ref{factor systems} classifies set of all equivalence classes of $(\Lambda,B)$-crossed product algebras. Next, given a set $\{\lambda_1,\ldots,\lambda_k\}$ of generators of $\Lambda$ (having order $n_1,\ldots, n_k\in \mathbb{N}$) of minimal cardinality and a class $[A]\in\Ext(\Lambda,B)$ with $\chi(A)=[(S,\omega)]$, we write $\Lambda_i$ for the subgroup of $\Lambda$ generated by $\lambda_i$, i.e., $\Lambda_i:=\langle\lambda_i\rangle_{\text{gr}}$ and $[(S_i,\omega_i)]\in H^2(\Lambda_i,B)$ for the factor system of the corresponding pull back extension $E_i$ (cf. Remark \ref{C_nC_mII}). 
\end{notation}

\begin{definition}\label{shit II}
Let $B$ be a unital algebra. We write $\Ext(\Lambda,B)_{\text{ss}}$ for the set of all equivalence classes of $(\Lambda,B)$-crossed product algebras for which there exists a set $\{\lambda_1,\ldots,\lambda_k\}$ of generators of $\Lambda$ (having order $n_1,\ldots, n_k\in \mathbb{N}$) of minimal cardinality such that $[A_{(S_i,\omega_i)}]\in \Ext(\Lambda_i,B)_{\text{split}}$ for all $i=1,\ldots,k$ (cf. Definition \ref{ClassTriNonCommPB3} and Construction \ref{realization of TNCT^B from factor systems I}). Here, ``$\text{ss}$" stands for ``semisplit". 
\end{definition}

\begin{example}\label{shit IV}
If $B=\mathbb{C}$, then 
%Example \ref{ClassTriNonCommPB6} implies that $\Ext(\Lambda,\mathbb{C})_{\text{ss}}=\Ext(\Lambda,\mathbb{C})$. Further, 
we conclude from Remark \ref{B abelian} that $\Ext(\Lambda,\mathbb{C})$ is classified by the space $H^2(\Lambda,\mathbb{C}^{\times})$, which in turn can easily be computed with the help of \cite[Proposition II.4]{Ne07b}. For example, if $\Lambda=C_n\times C_n$, then 
\[H^2(C_n\times C_n,\mathbb{C}^{\times})\cong C_n.
\]A short observation shows that the class of the trivial noncommutative principal $C_n\times C_n$-bundle $(\M_n(\mathbb{C}),C_n\times C_n,\alpha)$ of Example \ref{the matrix algebra} corresponds to the element $\exp(\frac{2\pi i}{n})$. 
\end{example}

%\begin{example}\label{shit V}
%Let $B=\M_m(\mathbb{C})$ for some $m\in\mathbb{N}$. Then Example \ref{ClassTriNonCommPB8} implies that the set $\Ext(C_n,\M_m(\mathbb{C}))_{\text{split}}$ consists of a single element. In particular, we conclude that $\Ext(\Lambda,\M_m(\mathbb{C}))=\Ext(\Lambda,\M_m(\mathbb{C}))_{\text{ss}}$. Since each automorphism of $\M_m(\mathbb{C})$ is inner, $\Ext(\Lambda,\M_m(\mathbb{C}))$ is classified by $H^2(\Lambda,\mathbb{C}^{\times})$ (cf. Theorem \ref{class of Z-kernels II}), which in turn can easily be computed with the help of \cite[Proposition II.4]{Ne07b}.
%\end{example}

We close this short subsection with the following open problem:

\begin{open problem}\label{shit VI}
Find a computable description of the set $\Ext(\Lambda,B)_{\text{ss}}$. For this purpose, it is worth to study \cite[Corollary 2.21]{Ne07a}. Moreover, ideas of the classification methods for loop algebras of \cite{ABP04} might be useful. The algebras $\M_m(\mathbb{C})$ and $C^{\infty}(M)$ for $m\in\mathbb{N}$ and a manifold $M$ are of particular interest and serve also as a good starting point. We recall that $\Ext(\Lambda,\M_m(\mathbb{C}))$ is classified by $H^2(\Lambda,\mathbb{C}^{\times})$ (cf. Theorem \ref{class of Z-kernels II}), which in turn can easily be computed with the help of \cite[Proposition II.4]{Ne07b}.
\end{open problem}

\section{Non-trivial noncommutative principal bundles with finite abelian structure sroup}\label{NonTriPrinBundFinAbStrGr}

The main goal of this section is to present a geometrically oriented approach to the noncommutative geometry of principal $\Lambda$-bundles. Since the freeness property of a group action is a local condition (cf. \cite[Remark 8.10]{Wa11d}), our main idea is inspired by the classical setting: Loosely speaking, a dynamical system $(A,\Lambda,\alpha)$ is called a noncommutative principal $\Lambda$-bundle, if it is ``locally" a trivial noncommutative principal $\Lambda$-bundle in the sense of Section \ref{TriPrinBundFinAbStrGr}. We prove that this approach extends the classical theory of principal $\Lambda$-bundles and present some noncommutative examples. In fact, we first show that each trivial noncommutative principal $\Lambda$-bundle carries the structure of a noncommutative principal $\Lambda$-bundle in its own right. We further show that examples of noncommutative principal $\Lambda$-bundles are provided by sections of algebra bundles with trivial noncommutative principal $\Lambda$-bundle as fibre, sections of algebra bundles which are pull-backs of principal $\Lambda$-bundles and sections of trivial equivariant algebra bundles. 

\begin{notation}
Let $A$ be a unital locally convex algebra and $C^{\infty}(\mathbb{R},A)$ the algebra of smooth $A$-valued function on $\mathbb{R}$ endowed with the smooth compact open topology. For each $a\in A$ we define an element of $C^{\infty}(\mathbb{R},A)$ by $f_a:\mathbb{R}\rightarrow A$, $t\mapsto 1_A-ta$ and write $I_a$ for the closure of the two-sided ideal generated by this element. We further write $A_{\{a\}}:=C^{\infty}(\mathbb{R},A)/I_a$ for the corresponding locally convex quotient algebra and call it the \emph{smooth localization} of $A$ with respect to $a$. Given a dynamical system $(A,\Lambda,\alpha)$ and an element $z$ in the fixed point algebra of the induced action of $\Lambda$ on the center $C_A$ of $A$, i.e., an element $z\in C_A^{\Lambda}$, the map
\[\alpha_{\{z\}}:\Lambda\times A_{\{z\}}\rightarrow A_{\{z\}},\,\,\,(\lambda,[f]\underline{})\mapsto[\alpha(\lambda)\circ f]
\]defines an action of $\Lambda$ on $A_{\{z\}}$ by algebra automorphisms. In particular, the triple $(A_{\{z\}},\Lambda,\alpha_{\{z\}})$ is a dynamical system and is called the \emph{smooth localization of $(A,\Lambda,\alpha)$} associated to the element $z\in C_A^{\Lambda}$. For a solid background on the previous discussion we refer to the first part of \cite{Wa11d}.
\end{notation}
%In particular, if $P$ is a manifold and $(C^{\infty}(P),\Lambda,\alpha)$ a dynamical system, then $Z=C^{\infty}(P)^{\Lambda}$. 

\begin{definition}\label{NCPT^nB again}
(Noncommutative principal $\Lambda$-bundles). We call a dynamical system $(A,\Lambda,\alpha)$ a \emph{noncommutative principal $\Lambda$-bundle} if for each $\chi\in\Gamma_{C_A^{\Lambda}}$ there exists an element $z\in C_A^{\Lambda}$ with $\chi(z)\neq 0$ such that the corresponding localized dynamical system $(A_{\{z\}},\Lambda,\alpha_{\{z\}})$ is a trivial noncommutative principal $\Lambda$-bundle the sense of Section \ref{TriPrinBundFinAbStrGr}. 
%in the appropriate sense (cf. Definition \ref{trivial noncomm T^n-bundles}, Definition \ref{C_nC_m V} or Definition \ref{trivial noncomm su2-bundles}).
\end{definition}

\begin{remark}\label{remark on NCPT^nB again}
(An equivalent point of view). The previous definition of noncommutative principal $\Lambda$-bundles is inspired by the classical setting. Indeed, given $z\in C_A^{\Lambda}$, we recall that $D(z):=\{\chi\in\Gamma_{C_A^{\Lambda}}:\,\chi(z)\neq 0\}$. Then a short observation shows that a dynamical system $(A,\Lambda,\alpha)$ is a noncommutative principal $\Lambda$-bundle if and only if there exists a family of elements $(z_i)_{i\in I}\subseteq C_A^{\Lambda}$ satisfying the following two conditions:
\begin{itemize}
\item[(i)]
The family $(D(z_i))_{i\in I}$ is an open covering of $\Gamma_{C_A^{\Lambda}}$.
\item[(ii)]
The localized dynamical systems $(A_{\{z_i\}},\Lambda,\alpha_{\{z_i\}})$ are trivial noncommutative principal $\Lambda$-bundles.
\end{itemize}
\end{remark}

\begin{theorem}\label{NCT^nB for manifold again}\emph{(}Reconstruction Theorem\emph{)}.
For a manifold $P$, the following assertions hold:
\begin{itemize}
\item[\emph{(a)}]
If $P$ is compact and $(C^{\infty}(P),\Lambda,\alpha)$ is a noncommutative principal $\Lambda$-bundle, then we obtain a principal $\Lambda$-bundle $(P,P/\Lambda,\Lambda,\pr,\sigma)$.
\item[\emph{(b)}]
Conversely, if $(P,M,\Lambda,q,\sigma)$ is a principal $\Lambda$-bundle, then the corresponding dynamical system $(C^{\infty}(P),\Lambda,\alpha)$ is a noncommutative principal $\Lambda$-bundle.
\end{itemize}
\end{theorem}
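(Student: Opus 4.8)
The plan is to prove each direction separately, using the localization machinery together with the classical correspondence between trivial noncommutative principal $\Lambda$-bundles and trivial principal $\Lambda$-bundles (Theorem \ref{C_nC_mVI}) and the identification of the spectrum of $C^\infty(P)$ with $P$ (Remark \ref{TNCPC_nB6}).

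For part (b), suppose $(P,M,\Lambda,q,\sigma)$ is a principal $\Lambda$-bundle. First I would recall that the base $M$ can be covered by open sets $U_i$ over which the bundle trivializes, i.e., $q^{-1}(U_i)\cong U_i\times\Lambda$ as principal $\Lambda$-bundles. Since $M$ is identified with $\Gamma_{C^\infty(M)}$ and $C^\infty(M)=C^\infty(P)^\Lambda=C_{C^\infty(P)}^\Lambda$ (using that $C^\infty(P)$ is commutative, so its center is all of it, and the fixed-point subalgebra is exactly the pullback $q^*C^\infty(M)$), each point $\chi\in\Gamma_{C_A^\Lambda}$ corresponds to some $m\in M$, and one can choose a function $z\in C^\infty(M)$ with $z(m)\neq 0$ and $\supp(z)\subseteq U_i$ for a suitable trivializing $U_i$. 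The key step is then to show that the smooth localization $(A_{\{z\}},\Lambda,\alpha_{\{z\}})$ with $A=C^\infty(P)$ is a trivial noncommutative principal $\Lambda$-bundle: morally, localizing $C^\infty(P)$ at $z$ restricts attention to the preimage $q^{-1}(D(z))\subseteq q^{-1}(U_i)$, which is a trivial principal $\Lambda$-bundle, so the trivializing functions $f_i$ of Theorem \ref{C_nC_mVI}(b) descend to give the required invertible homogeneous elements of finite order in the localized algebra. Invoking Definition \ref{NCPT^nB again} (or its equivalent formulation in Remark \ref{remark on NCPT^nB again}) then finishes this direction.

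For part (a), assume $P$ is compact and $(C^\infty(P),\Lambda,\alpha)$ is a noncommutative principal $\Lambda$-bundle. The action $\alpha$ of $\Lambda$ on $C^\infty(P)$ dualizes, via $\Phi$ from Remark \ref{TNCPC_nB6}, to a smooth action $\sigma$ of the finite group $\Lambda$ on $P$; since $\Lambda$ is finite, $\sigma$ is automatically proper, so the only thing to check is freeness. Here I would use that for each $\chi\in\Gamma_{C_A^\Lambda}$ there is $z\in C_A^\Lambda$ with $\chi(z)\neq 0$ such that $(A_{\{z\}},\Lambda,\alpha_{\{z\}})$ is trivial; by Lemma \ref{C_nC_mIV} the induced action on $\Gamma_{A_{\{z\}}}$ is free, and $\Gamma_{A_{\{z\}}}$ is (diffeomorphic to) the open $\Lambda$-invariant subset $q^{-1}$ of $P$ lying over $D(z)$. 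Since the $D(z)$'s cover $\Gamma_{C_A^\Lambda}=P/\Lambda$, these invariant open sets cover $P$, and freeness is a local (pointwise) condition, so $\sigma$ is free on all of $P$. The Quotient Theorem (cf. \cite[Kapitel VIII, Satz 21.6]{To00}) then yields the principal $\Lambda$-bundle $(P,P/\Lambda,\Lambda,\pr,\sigma)$.

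The main obstacle I anticipate is the bookkeeping around the smooth localization: one must verify carefully that for $A=C^\infty(P)$ and $z\in C^\infty(M)=C_A^\Lambda$, the localized algebra $A_{\{z\}}$ really is (isomorphic, $\Lambda$-equivariantly, as a locally convex algebra to) the algebra of smooth functions on the open saturated set $q^{-1}(D(z))$, and that triviality of the localized dynamical system is genuinely equivalent to triviality of the restricted principal bundle over $D(z)$. This is essentially the content of the first part of \cite{Wa11d}, which we are permitted to cite; once that identification is in hand, both directions reduce to the already-established trivial case (Theorem \ref{C_nC_mVI}) plus the elementary observation that freeness, local triviality and open coverings all behave well under passing between $P$ and its image in the spectrum. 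The compactness hypothesis in (a) is needed to ensure that $P/\Lambda$ is compact (hence its finite open cover by the $D(z_i)$ can be taken finite) and, more substantively, that the reconstruction of $P$ from $C^\infty(P)$ via $\Gamma$ behaves well; I would flag this point rather than belabor it.
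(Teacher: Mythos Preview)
Your proposal is correct and follows essentially the same route as the paper's proof. The paper simply defers the details to \cite[Theorem 8.11]{Wa11d} and notes that the two ingredients one must supply in the present $\Lambda$-setting are exactly the ones you identified: freeness of the induced action on the spectrum in the trivial case (Proposition \ref{C_nC_mV}, which rests on Lemma \ref{C_nC_mIV}) for part (a), and Theorem \ref{C_nC_mVI} for part (b); your explicit unpacking of the localization argument and the cover by $D(z_i)$'s is precisely what \cite[Theorem 8.11]{Wa11d} does in the torus case.
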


\begin{proof}
\,\,\,The proof of this theorem is similar to the proof of \cite[Theorem 8.11]{Wa11d}. In fact, for part (a) we recall that if $A$ is a commutative unital locally convex algebra and $(A,\Lambda,\alpha)$ a trivial noncommutative principal $\Lambda$-bundle, then the induced action of $\Lambda$ on the spectrum $\Gamma_A$ of $A$ is free (cf. Propositon \ref{C_nC_mV}). For part (b) we have to add Theorem \ref{C_nC_mVI}.
\end{proof}

\section*{Example 1: Trivial noncommutative principal $\Lambda$-bundles}

We show that each trivial noncommutative principal $\Lambda$-bundle carries the structure of a noncommutative principal $\Lambda$-bundle:

\begin{theorem}\emph{(}Trivial Noncommutative Principal $\Lambda$-bundles\emph{)}.\label{TNCTB are NCTB}
Each trivial noncommutative principal $\Lambda$-bundle $(A,\Lambda,\alpha)$ carries the structure of a noncommutative principal $\Lambda$-bundle.
\end{theorem}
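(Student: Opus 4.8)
The plan is to verify Definition \ref{NCPT^nB again} directly by exhibiting, for every character $\chi\in\Gamma_{C_A^{\Lambda}}$, a suitable element $z\in C_A^{\Lambda}$ with $\chi(z)\neq 0$ whose localization $(A_{\{z\}},\Lambda,\alpha_{\{z\}})$ is again a trivial noncommutative principal $\Lambda$-bundle. The natural candidate is the simplest possible one: take $z:=1_A\in C_A^{\Lambda}$. Then $\chi(z)=\chi(1_A)=1\neq 0$ for every $\chi$, and $D(z)=\Gamma_{C_A^{\Lambda}}$, so condition (i) of Remark \ref{remark on NCPT^nB again} is trivially satisfied with the one-element family $(z_i)_{i\in I}=(1_A)$. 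Thus the whole statement reduces to checking that the localization of $(A,\Lambda,\alpha)$ at $z=1_A$ is itself a trivial noncommutative principal $\Lambda$-bundle.

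First I would identify the localized algebra $A_{\{1_A\}}=C^{\infty}(\mathbb{R},A)/I_{1_A}$. Since $f_{1_A}(t)=1_A-t\cdot 1_A=(1-t)1_A$, the closed two-sided ideal $I_{1_A}$ consists (essentially) of those smooth $A$-valued functions vanishing at $t=1$, and evaluation at $t=1$ induces a $\Lambda$-equivariant isomorphism of unital locally convex algebras $A_{\{1_A\}}\cong A$ intertwining $\alpha_{\{1_A\}}$ with $\alpha$; this is the expected behaviour of the smooth localization at an invertible element and is covered by the background cited from \cite{Wa11d}. Under this identification the isotypic components match up, so the trivial-bundle structure of $(A,\Lambda,\alpha)$ transports verbatim. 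Concretely, if $\{\lambda_1,\dots,\lambda_k\}$ is the minimal generating set and $a_i\in A_{\widehat{\lambda}_i}$ the given invertible elements with $a_i^{n_i}=1_A$, then their images in $A_{\{1_A\}}$ lie in the corresponding isotypic components $A_{\{1_A\},\widehat{\lambda}_i}$, remain invertible, and still satisfy the $n_i$-th power relation; hence $(A_{\{1_A\}},\Lambda,\alpha_{\{1_A\}})$ is a trivial noncommutative principal $\Lambda$-bundle in the sense of Definition \ref{C_nC_mI}.

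With that in hand the proof concludes: every $\chi\in\Gamma_{C_A^{\Lambda}}$ admits the element $z=1_A\in C_A^{\Lambda}$ with $\chi(z)=1\neq 0$ and $(A_{\{z\}},\Lambda,\alpha_{\{z\}})$ a trivial noncommutative principal $\Lambda$-bundle, which is exactly the requirement of Definition \ref{NCPT^nB again}. Hence $(A,\Lambda,\alpha)$ is a noncommutative principal $\Lambda$-bundle.

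The only genuinely nonroutine point is the claimed $\Lambda$-equivariant isomorphism $A_{\{1_A\}}\cong A$, i.e.\ that localizing at the unit changes nothing. I expect this to be the main (and essentially the sole) obstacle, and it is the place where one must invoke the precise construction of the smooth localization from \cite{Wa11d}: one checks that the evaluation-at-$1$ map $C^{\infty}(\mathbb{R},A)\to A$ is a surjective, continuous, $\Lambda$-equivariant algebra homomorphism with kernel exactly $I_{1_A}$ (using that $f_{1_A}$ generates, as a closed ideal, precisely the functions vanishing at $t=1$, since $(1-t)$ is invertible as a smooth function away from $1$ and the Taylor/Hadamard argument handles the order of vanishing at $1$). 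Everything else — transporting invertible homogeneous elements and their power relations across an equivariant isomorphism — is immediate from Definition \ref{C_nC_mI}.
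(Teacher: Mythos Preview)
Your proposal is correct and follows essentially the same approach as the paper: choose $z=1_A\in C_A^{\Lambda}$, note $\chi(1_A)=1\neq 0$ for every $\chi$, and invoke the isomorphism $A_{\{1_A\}}\cong A$ of unital locally convex algebras (which the paper cites as \cite[Corollary 2.10]{Wa11d}) to conclude. The only difference is that you spell out the evaluation-at-$1$ argument for $A_{\{1_A\}}\cong A$ rather than quoting the corollary directly.
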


\begin{proof}
\,\,\,If $(A,\Lambda,\alpha)$ is a trivial noncommutative principal $\Lambda$-bundle, then \mbox{$1_A\in C_A^{\Lambda}$} and $\chi(1_A)=1\neq 0$ holds for each $\chi\in\Gamma_{C_A^{\Lambda}}$. In particular, \cite[Corollary 2.10]{Wa11d} implies that $A_{\{1_A\}}\cong A$ holds as unital locally convex algebras. Thus, $(A,\Lambda,\alpha)$ is a noncommutative principal $\Lambda$-bundle in its own right.
\end{proof}

\section*{Example 2: Sections of algebra bundles with a trivial noncommutative principal $\Lambda$-bundle as fibre}\label{examples of NCP T^n-bundles II}

We show that if $A$ is a unital Fr\'echet algebra and $(A,\Lambda,\alpha)$ a trivial noncommutative principal $\Lambda$-bundle such that $C_A^{\Lambda}$ is isomorphic to $\mathbb{C}$, then the algebra of sections of each algebra bundle with ``fibre" $(A,\Lambda,\alpha)$ is a noncommutative principal $\Lambda$-bundle. We start with the following lemma:

\begin{proposition}\label{trivial NCP T^n-bundles from trivial algebra bundles}
If $(A,\Lambda,\alpha)$ is a trivial noncommutative principal $\Lambda$-bundle and $M$ a manifold, then the triple $(C^{\infty}(M,A),\Lambda,\beta)$, where
\[\beta:\Lambda\times C^{\infty}(M,A)\rightarrow C^{\infty}(M,A),\,\,\,(\lambda,f)\mapsto\alpha(\lambda)\circ f
\]carries the structure of a trivial noncommutative principal $\Lambda$-bundle.
\end{proposition}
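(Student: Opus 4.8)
The plan is to verify directly that each isotypic component of the dynamical system $(C^{\infty}(M,A),\Lambda,\beta)$ contains an invertible element of the prescribed finite order, using the same generators that work for $(A,\Lambda,\alpha)$. First I would fix a set of generators $\{\lambda_1,\ldots,\lambda_k\}$ of $\Lambda$ of minimal cardinality (with orders $n_1,\ldots,n_k$) for which, by Definition \ref{C_nC_mI}, there exist invertible $a_i\in A_{\widehat{\lambda}_i}$ with $a_i^{n_i}=1_A$. The natural candidate for a witness in $C^{\infty}(M,A)$ is the constant function $\widehat{a}_i:M\to A$, $m\mapsto a_i$, which is certainly smooth.

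The key steps are then the following. (1) Observe that $\widehat{a}_i$ is invertible in $C^{\infty}(M,A)$ with inverse the constant function $m\mapsto a_i^{-1}$; here one uses that inversion in $A$ is smooth (or at least continuous) so that $m\mapsto a_i^{-1}$ is again a legitimate element of $C^{\infty}(M,A)$ — since the function is constant this is immediate. (2) Check that $\widehat{a}_i$ lies in the isotypic component $C^{\infty}(M,A)_{\widehat{\lambda}_i}$: for $\lambda\in\Lambda$ and $m\in M$ we have $\beta(\lambda,\widehat{a}_i)(m)=(\alpha(\lambda)\circ\widehat{a}_i)(m)=\alpha(\lambda)(a_i)=\widehat{\lambda}_i(\lambda)\cdot a_i=\widehat{\lambda}_i(\lambda)\cdot\widehat{a}_i(m)$, so $\beta(\lambda,\widehat{a}_i)=\widehat{\lambda}_i(\lambda)\cdot\widehat{a}_i$. (3) Finally, $(\widehat{a}_i)^{n_i}$ is the constant function $m\mapsto a_i^{n_i}=1_A$, which is exactly the unit $1_{C^{\infty}(M,A)}$. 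Together these show that $(C^{\infty}(M,A),\Lambda,\beta)$ satisfies Definition \ref{C_nC_mI} with the same generating set, hence is a trivial noncommutative principal $\Lambda$-bundle.

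I would also briefly note that $\beta$ genuinely defines a dynamical system: each $\beta(\lambda)$ is an algebra automorphism of $C^{\infty}(M,A)$ because it is post-composition with the automorphism $\alpha(\lambda)$ of $A$, the homomorphism property $\beta(\lambda\lambda')=\beta(\lambda)\beta(\lambda')$ is inherited from $\alpha$, and continuity of the action follows from continuity of $\alpha$ together with the definition of the smooth compact-open topology on $C^{\infty}(M,A)$. This is routine and I would not belabor it.

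There is essentially no serious obstacle here: the only mild subtlety is making sure the constant-function construction is compatible with the chosen topology on $C^{\infty}(M,A)$ and that invertibility in the function algebra is witnessed by the pointwise inverse — but since the relevant elements are constant, smoothness and invertibility are trivially inherited from $A$. The proof is genuinely a short unwinding of definitions, and the substantive content (that such $a_i$ exist at all) is supplied by the hypothesis that $(A,\Lambda,\alpha)$ is already a trivial noncommutative principal $\Lambda$-bundle.
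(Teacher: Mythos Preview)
Your proof is correct and follows essentially the same approach as the paper: both exploit the embedding of $A$ into $C^{\infty}(M,A)$ via constant functions to transport the invertible witnesses $a_i\in A_{\widehat{\lambda}_i}$ with $a_i^{n_i}=1_A$ into the corresponding isotypic components of $C^{\infty}(M,A)$. The paper additionally remarks that $C^{\infty}(M,A)_{\widehat{\lambda}}=C^{\infty}(M,A_{\widehat{\lambda}})$ and outsources the dynamical-system verification to an external reference, whereas you spell out the pointwise check directly; the substance is the same.
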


\begin{proof}
\,\,\,The claim directly follows from \cite[Lemma 7.5]{Wa11d} and the fact that the algebra $A$ is naturally embedded in $C^{\infty}(M,A)$ through the constant maps. In fact, if $\widehat{\lambda}\in\widehat{\Lambda}$ (again, $\widehat{\Lambda}$ denotes the dual group of $\Lambda$), then the corresponding isotypic component $C^{\infty}(M,A)_{\widehat{\lambda}}$ is equal to $C^{\infty}(M,A_{\widehat{\lambda}})$. 
%and thus contains invertible elements.
%isotypic component of $C^{\infty}(M,A)$ corresponding to $k\in\mathbb{Z}^n$ is given by $C^{\infty}(M,A_k)$, and thus contains invertible elements.
\end{proof}

\begin{remark}\label{automorphism of dynamical systems}(The automorphism group of a dynamical system).
Let $(A,\Lambda,\alpha)$ be a dynamical system. The group
\[\Aut_{\Lambda}(A):=\{\varphi\in\Aut(A):\,(\forall \lambda\in\Lambda)\,\alpha(\lambda)\circ\varphi=\varphi\circ\alpha(\lambda)\}
\]is called the \emph{automorphism group of} the dynamical system $(A,\Lambda,\alpha)$.
\end{remark}

\begin{example}\label{G-automorphism group of quantumtori}
Let $(\M_n(\mathbb{C}),C_n\times C_n,\alpha)$ be the trivial noncommutative principal $C_n\times C_n$-bundle of Example \ref{the matrix algebra}. Then we have
\[\Aut_{C_n\times C_n}(\M_n(\mathbb{C}))\cong C_n\times C_n.
\]Indeed, we first recall that $\M_n(\mathbb{C})$ is generated by the unitaries $R$ and $S$. If now $\varphi\in\Aut_{C_n\times C_n}(\M_n(\mathbb{C}))$, then a short observation leads to $\varphi(R)=\lambda_R\cdot R$ and to $\varphi(S)=\lambda_S\cdot S$ for some $\lambda_R,\lambda_S\in C_n$. In particular, each element in $\Aut_{C_n\times C_n}(\M_n(\mathbb{C}))$ corresponds to an element in $C_n\times C_n$ and vice versa. %Thus, $\Aut_{\mathbb{T}^n}(\mathbb{T}^n_{\theta})\cong \mathbb{T}^n$.
\end{example}

\begin{theorem}\label{NCP G_bundle with fibre trivial NCP G bundle}
Let $A$ be a unital Fr\'echet algebra and $(A,\Lambda,\alpha)$ a trivial noncommutative principal $\Lambda$-bundle such that $C_A^{\Lambda}$ is isomorphic to $\mathbb{C}$. Further, let $M$ be a manifold, $(U_i)_{i \in I}$ an open cover of $M$ and $U_{ij}:=U_i\cap U_j$ for $i,j\in I$. If $(g_{ij})_{i,j \in I}$ is a collection of functions $g_{ij}\in C^{\infty}(U_{ij},\Aut_{\Lambda}(A))$ satisfying 
\[g_{ii}={\bf 1}\,\,\,\text{and}\,\,\,g_{ij}g_{jk}=g_{ik}\,\,\,\text{on}\,\,\,U_{ijk}:=U_i\cap U_j\cap U_k,
\]then the following assertions hold:
\begin{itemize}
\item[\emph{(a)}]
There exists an algebra bundle $(\mathbb{A},M,A,q)$ and bundle charts $\varphi_i:U_i\times A\rightarrow\mathbb{A}_{U_i}$ such that
\[(\varphi_i^{-1}\circ\varphi_j)(x,a)=(x,g_{ij}(x).a).
\]Moreover, the map
\[\sigma:\Lambda\times\mathbb{A}\rightarrow\mathbb{A},\,\,\,(\lambda,a)\mapsto\varphi_i(x,\alpha(\lambda).a_0),
\]where $i\in I$ with $x=q(a)\in U_i$ and $a_0\in A$ with $\varphi_i(x,a_0)=a$, defines an action of $\Lambda$ on $\mathbb{A}$ by fibrewise algebra automorphisms.
\item[\emph{(b)}]
The map
\[\beta:\Lambda\times\Gamma\mathbb{A}\rightarrow\Gamma\mathbb{A},\,\,\,\beta(\lambda,s)(m):=\sigma(\lambda,s(m))
\]defines an action of $\Lambda$ on the corresponding space $\Gamma\mathbb{A}$ of sections by algebra automorphisms. Furthermore, the triple $(\Gamma\mathbb{A},\Lambda,\beta)$ carries the structure of a noncommutative principal $\Lambda$-bundle.
\end{itemize}
\end{theorem}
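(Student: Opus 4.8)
The plan is to reduce the statement to a local, fibrewise application of Theorem~\ref{TNCTB are NCTB} and Proposition~\ref{trivial NCP T^n-bundles from trivial algebra bundles}, using the fact that sections of an algebra bundle restrict nicely to trivializing neighbourhoods. For part~(a), the existence of the algebra bundle $(\mathbb{A},M,A,q)$ together with the bundle charts $\varphi_i$ is a standard cocycle construction: given the cocycle $(g_{ij})$ with values in $\Aut_\Lambda(A)\subseteq\Aut(A)$, one glues $\bigsqcup_i U_i\times A$ along $U_{ij}$ via $(x,a)\sim(x,g_{ij}(x).a)$, and the clutching relations $g_{ii}=\mathbf{1}$, $g_{ij}g_{jk}=g_{ik}$ guarantee this is a well-defined equivalence relation yielding a locally trivial algebra bundle; this is exactly the situation of \cite[Lemma~7.5]{Wa11d} or standard fibre-bundle theory. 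The point to check for the $\Lambda$-action $\sigma$ is \emph{well-definedness}: if $\varphi_i(x,a_0)=\varphi_j(x,a_0')$ then $a_0'=g_{ji}(x).a_0$, and since each $g_{ji}(x)\in\Aut_\Lambda(A)$ commutes with $\alpha(\lambda)$, we get $\varphi_j(x,\alpha(\lambda).a_0')=\varphi_j(x,\alpha(\lambda).(g_{ji}(x).a_0))=\varphi_j(x,g_{ji}(x).(\alpha(\lambda).a_0))=\varphi_i(x,\alpha(\lambda).a_0)$, so $\sigma(\lambda,\cdot)$ is a well-defined fibrewise map; that it is an algebra automorphism of each fibre and that $\sigma$ is smooth and an action of $\Lambda$ follows since in each chart it reads $(x,a_0)\mapsto(x,\alpha(\lambda).a_0)$ and $\alpha$ is an action by algebra automorphisms.

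For part~(b), that $\beta$ is a well-defined action of $\Lambda$ on $\Gamma\mathbb{A}$ by algebra automorphisms is immediate from part~(a), since $\beta(\lambda,\cdot)$ is postcomposition of a section with the fibrewise automorphism $\sigma(\lambda,\cdot)$, hence again a smooth section, and the algebra and group-action axioms are inherited pointwise. The substantive claim is that $(\Gamma\mathbb{A},\Lambda,\beta)$ is a noncommutative principal $\Lambda$-bundle, and here I would use the equivalent characterization of Remark~\ref{remark on NCPT^nB again}: it suffices to produce a family $(z_i)$ in $C_{\Gamma\mathbb{A}}^{\Lambda}$ whose open sets $D(z_i)$ cover $\Gamma_{C_{\Gamma\mathbb{A}}^{\Lambda}}$ and such that each localization $((\Gamma\mathbb{A})_{\{z_i\}},\Lambda,\beta_{\{z_i\}})$ is trivial. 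Since $C_A^\Lambda\cong\mathbb{C}$, the $\Lambda$-fixed part of the centre of $\Gamma\mathbb{A}$ should be identified with $C^\infty(M)$ (the locally constant-on-fibres central $\Lambda$-invariant sections), so $\Gamma_{C_{\Gamma\mathbb{A}}^{\Lambda}}\cong M$ via point evaluations; one then takes a partition-of-unity-type family or bump functions $z_i\in C^\infty(M)$ subordinate to the cover $(U_i)$ with $\{z_i\neq 0\}$ still covering $M$. Localizing $\Gamma\mathbb{A}$ at such a $z_i$ yields (by the localization calculus of \cite{Wa11d}, \cite[Lemma~7.5]{Wa11d}) the section algebra $\Gamma(\mathbb{A}|_{U_i'})$ over the open set $U_i'=\{z_i\neq 0\}\subseteq U_i$ on which $\mathbb{A}$ is trivial, i.e.\ isomorphic as a $\Lambda$-dynamical system to $(C^\infty(U_i',A),\Lambda,\beta)$ of Proposition~\ref{trivial NCP T^n-bundles from trivial algebra bundles}; that proposition then tells us this is a trivial noncommutative principal $\Lambda$-bundle, completing the verification.

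The main obstacle I anticipate is the precise identification of the localization $(\Gamma\mathbb{A})_{\{z_i\}}$ with the section algebra over the open set $D(z_i)$ — this requires the localization machinery of \cite{Wa11d} (in the spirit of \cite[Corollary~2.10]{Wa11d} used in Theorem~\ref{TNCTB are NCTB}) and a check that the $\Lambda$-action is compatible with this identification, i.e.\ that $\beta_{\{z_i\}}$ corresponds to the restriction action. One must also be a little careful that a trivializing set for $\mathbb{A}$ can be chosen of the form $\{z_i\neq 0\}$ with $z_i\in C^\infty(M)$; shrinking the cover $(U_i)$ and using that $M$ is a (paracompact) manifold so that such functions exist makes this harmless, but it should be stated. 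Everything else — the cocycle gluing, the well-definedness of $\sigma$ and $\beta$, the algebra-automorphism and action axioms — is routine bookkeeping once the commutation of $g_{ij}(x)$ with $\alpha(\lambda)$ is exploited, and does not need to be spelled out in detail beyond citing Proposition~\ref{trivial NCP T^n-bundles from trivial algebra bundles} and \cite[Lemma~7.5]{Wa11d}.
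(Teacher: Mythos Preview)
Your proposal is correct and follows essentially the same route as the paper. The paper's proof is extremely terse --- it cites \cite[Proposition 8.17]{Wa11d} for part~(a) and says part~(b) is ``similar to the proof of \cite[Theorem 8.20]{Wa11d}'' with Proposition~\ref{trivial NCP T^n-bundles from trivial algebra bundles} added --- and what you have written is precisely an unpacking of those references: the cocycle gluing for~(a), the identification $C_{\Gamma\mathbb{A}}^{\Lambda}\cong C^{\infty}(M)$ via $C_A^{\Lambda}\cong\mathbb{C}$, bump functions subordinate to the trivializing cover, the localization $(\Gamma\mathbb{A})_{\{z_i\}}\cong C^{\infty}(U_i',A)$, and finally Proposition~\ref{trivial NCP T^n-bundles from trivial algebra bundles} to conclude local triviality.
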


\begin{proof}
\,\,\,The first assertion of this theorem is a consequence of \cite[Proposition 8.17]{Wa11d}. The proof of the second assertion is similar to the proof of \cite[Theorem 8.20]{Wa11d}. In fact, we just have to add Proposition \ref{trivial NCP T^n-bundles from trivial algebra bundles}.
\end{proof}

\begin{example}\label{non-triviality of the previous construction for example 2}(Non-triviality of the previous construction).
In this example we show that the previous construction actually leads to non-trivial examples. For this we apply Theorem \ref{NCP G_bundle with fibre trivial NCP G bundle} to the trivial noncommutative principal $C_n\times C_n$-bundle $(\M_n(\mathbb{C}),C_n\times C_n,\alpha)$ of Example \ref{the matrix algebra}. In view of Example \ref{G-automorphism group of quantumtori} we have 
\begin{align}
\Aut_{C_n\times C_n}(\M_n(\mathbb{C}))\cong C_n\times C_n.\notag%\label{referenz G-automorphism group of quantumtori}
\end{align}
In particular, a similar argument as in \cite[Proposition 2.1.14]{Wa11a} implies that there is a one-to-one correspondence between the algebra bundles arising from Theorem \ref{NCP G_bundle with fibre trivial NCP G bundle} (a) and principal $C_n\times C_n$-bundles. Thus, if $(\mathbb{A},M,\M_n(\mathbb{C}),q)$ is such an algebra bundle which corresponds to a non-trivial principal $C_n\times C_n$-bundle, then also $(\mathbb{A},M,\M_n(\mathbb{C}),q)$ is non-trivial as algebra bundle. We claim that the associated dynamical system $(\Gamma\mathbb{A},C_n\times C_n,\beta)$ of Theorem \ref{NCP G_bundle with fibre trivial NCP G bundle} (b) is a non-trivial noncommutative principal $C_n\times C_n$-bundle. To prove this claim we assume the converse, i.e., that $(\Gamma\mathbb{A},C_n\times C_n,\beta)$ is a trivial noncommutative principal $C_n\times C_n$-bundle. For this, we proceed as follows:

%(i) We first recall that $\M_n(\mathbb{C})$ is generated by the unitaries $R$ and $S$ and that its elements are given by finite sums
%\[a=\sum a_{kl}(R^*)^kS^l\,\,\,\text{with}\,\,\,a_{kl}\in\mathbb{C}.
%\]

(i) Since $(\Gamma\mathbb{A},C_n\times C_n,\beta)$ is assumed to be a trivial noncommutative principal $C_n\times C_n$-bundle, there exist two generators $\lambda_1$, $\lambda_2$ of $C_n\times C_n$ and invertible elements 
\[s_1\in(\Gamma\mathbb{A})_{\widehat{\lambda}_1}=\left\{s\in\Gamma\mathbb{A}:\,(\forall m\in M)\,s(m)\in(\mathbb{A}_m)_{\widehat{\lambda}_1}\right\}
\]and
\[s_2\in(\Gamma\mathbb{A})_{\widehat{\lambda}_2}=\left\{s\in\Gamma\mathbb{A}:\,(\forall m\in M)\,s(m)\in(\mathbb{A}_m)_{\widehat{\lambda}_2}\right\}
\]satisfying $s_1^n=s_2^n=1$. 

(ii) If $s^i_1:=\pr_{\M_n(\mathbb{C})}\circ\varphi_i^{-1}\circ {s_1}_{\mid U_i}:U_i\rightarrow \M_n(\mathbb{C})$, then a short observation shows that $s^i_1=\mu^i_1\cdot V_1$ for a unitary $V_1\in\M_n(\mathbb{C})_{\widehat{\lambda}_1}$ satisfying $V_1^n={\bf 1}$ (note that $V_1$ has the form $R^kS^l$ for some $k,l\in\mathbb{N}$) and a (smooth) function $\mu^i_1:U_i\rightarrow C_n$. The same construction applied to $s_2$ shows that $s^i_2=\mu^i_2\cdot V_2$ for a unitary $V_2\in\M_n(\mathbb{C})_{\widehat{\lambda}_2}$ satisfying $V_2^n={\bf 1}$ and a (smooth) function $\mu^i_2:U_i\rightarrow C_n$.

(iii) Next, we recall that $\M_n(\mathbb{C})$ is generated by the unitaries $R$ and $S$. Therefore it is also generated by the unitaries $V_1$ and $V_2$, i.e., its elements can uniquely be written as finite sums of the form
\[a=\sum a_{kl}V_1^kV_2^l\,\,\,\text{with}\,\,\,a_{kl}\in\mathbb{C}.
\]

(iv) We now show that the map
\[\varphi:M\times\M_n(\mathbb{C})\rightarrow\mathbb{A},\,\,\,\left(m,a=\sum a_{kl}V_1^kV_2^l\right)\mapsto\sum a_{kl}s_1^k(m)\cdot s_2^l(m)
\]is an equivalence of algebra bundles over $M$. Indeed, $\varphi$ is bijective and fibrewise an algebra automorphism. Moreover, the map $\varphi$ is smooth if and only if the map 
\[\psi_i:=\pr_{\M_n(\mathbb{C})}\circ\varphi_i^{-1}\circ\varphi_{\mid U_i\times\M_n(\mathbb{C})}:U_i\times \M_n(\mathbb{C})\rightarrow\M_n(\mathbb{C}), 
\]
\[\left(x,a=\sum a_{kl}V_1^kV_2^l\right)\mapsto\sum a_{kl}(s^i_1)^k(x)\cdot(s^i_2)^l(x)
\]is smooth for each $i\in I$. Since 
\[\psi_i(x,a)=\sum a_{kl}(\mu^i_1)^k(x)\cdot(\mu^i_2)^l(x)V_1^kV_2^l,
\]we conclude that $$\psi_i=\alpha\circ\left((\mu^i_1,\mu^i_2)\times\id_{\M_n(\mathbb{C})}\right),$$i.e., that $\psi_i$ is smooth as a composition of smooth maps. A similar argument shows the smoothness of the inverse map.

(v) We finally achieve the desired contradiction: In view of part (iv), $\mathbb{A}$ is a trivial algebra bundle contradicting the construction of $\mathbb{A}$, i.e., that $\mathbb{A}$ is non-trivial as algebra bundle. This proves the claim.
\end{example}

\section*{Example 3: Sections of algebra bundles which are pull-backs of principal $\Lambda$-bundles}\label{examples of NCP T^n-bundles III}

We show that if $A$ is a unital Fr\'echet algebra with trivial center, $(\mathbb{A},M,A,q)$ an algebra bundle and $(P,M,\Lambda,\pi,\sigma)$ a principal $\Lambda$-bundle, then the algebra of sections of the pull-back bundle 
\[\pi^{*}(\mathbb{A}):=\{(p,a)\in P\times\mathbb{A}:\,\pi(p)=q(a)\}
\]carries the structure of a noncommutative principal $\Lambda$-bundle. We start with the following lemma:

\begin{proposition}\label{pull-back 0}
If $A$ is a unital locally convex algebra and $M$ a manifold, then the map
\[\alpha:\Lambda\times C^{\infty}(M\times \Lambda,A)\rightarrow C^{\infty}(M\times \Lambda,A),\,\,\,(\lambda,f)\mapsto(\lambda.f)(m,\lambda'):=f(m,\lambda\lambda')
\]defines an action of $\Lambda$ on $C^{\infty}(M\times\Lambda,A)$ by algebra automorphisms. In particular, the triple $(C^{\infty}(M\times\Lambda,A),\Lambda,\alpha)$ carries the structure of a trivial noncommutative principal $\Lambda$-bundle.
\end{proposition}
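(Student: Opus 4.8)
The plan is to verify first that $\alpha$ is an action by algebra automorphisms and then to exhibit the data required by Definition \ref{C_nC_mI}. For the first part, I would check directly that for $\lambda,\lambda'\in\Lambda$ and $f\in C^{\infty}(M\times\Lambda,A)$ one has $(\lambda.(\lambda'.f))(m,\lambda'')=(\lambda'.f)(m,\lambda\lambda'')=f(m,\lambda'\lambda\lambda'')=((\lambda'\lambda).f)(m,\lambda'')$; since $\Lambda$ is abelian this equals $((\lambda\lambda').f)(m,\lambda'')$, so $\alpha$ is a group homomorphism into the bijections of $C^{\infty}(M\times\Lambda,A)$. That each $\alpha(\lambda)$ is an algebra automorphism is immediate because it is given by precomposition with the diffeomorphism $(m,\lambda')\mapsto(m,\lambda\lambda')$ of $M\times\Lambda$, hence linear and multiplicative with inverse $\alpha(\lambda^{-1})$; smoothness is clear since $\Lambda$ is finite (the action is locally constant in $\lambda$). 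Thus $(C^{\infty}(M\times\Lambda,A),\Lambda,\alpha)$ is a dynamical system.

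For the triviality, fix a set of generators $\{\lambda_1,\dots,\lambda_k\}$ of $\Lambda$ of minimal cardinality, with $\lambda_i$ of order $n_i$, and let $\widehat{\lambda}_i\in\widehat{\Lambda}$ be the associated characters. The key step is to write down, for each $i$, an invertible element $a_i\in C^{\infty}(M\times\Lambda,A)_{\widehat{\lambda}_i}$ with $a_i^{n_i}=1$. The natural candidate is the function $a_i(m,\lambda'):=\overline{\widehat{\lambda}_i(\lambda')}\cdot 1_A$ (which does not depend on $m$); it is smooth since $\Lambda$ is finite. I would then compute $(\lambda.a_i)(m,\lambda')=a_i(m,\lambda\lambda')=\overline{\widehat{\lambda}_i(\lambda\lambda')}\cdot 1_A=\overline{\widehat{\lambda}_i(\lambda)}\,\overline{\widehat{\lambda}_i(\lambda')}\cdot 1_A$, which equals $\overline{\widehat{\lambda}_i(\lambda)}\cdot a_i(m,\lambda')$. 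Since $\widehat{\lambda}_i(\lambda)\in\mathbb{T}$, we have $\overline{\widehat{\lambda}_i(\lambda)}=\widehat{\lambda}_i(\lambda)^{-1}$; to land exactly in the isotypic component $A_{\widehat{\lambda}_i}$ as defined, one should instead take $a_i(m,\lambda'):=\widehat{\lambda}_i(\lambda')\cdot 1_A$, so that $(\lambda.a_i)(m,\lambda')=\widehat{\lambda}_i(\lambda\lambda')\cdot 1_A=\widehat{\lambda}_i(\lambda)\cdot a_i(m,\lambda')$, i.e.\ $\alpha(\lambda).a_i=\widehat{\lambda}_i(\lambda)\cdot a_i$ as required. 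This $a_i$ is invertible in $C^{\infty}(M\times\Lambda,A)$ with inverse $(m,\lambda')\mapsto\overline{\widehat{\lambda}_i(\lambda')}\cdot 1_A$, and pointwise $a_i(m,\lambda')^{n_i}=\widehat{\lambda}_i(\lambda')^{n_i}\cdot 1_A=1_A$ because $\widehat{\lambda}_i$ has order $n_i$; hence $a_i^{n_i}=1$ in the algebra of functions.

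Assembling these observations gives the claim: each associated isotypic component contains an invertible element of the prescribed finite order, so $(C^{\infty}(M\times\Lambda,A),\Lambda,\alpha)$ is a trivial noncommutative principal $\Lambda$-bundle in the sense of Definition \ref{C_nC_mI}. I do not expect any real obstacle here; the only point requiring a little care is bookkeeping with the convention identifying $\widehat{\Lambda}_i$ with $C_{n_i}$ and checking that the chosen $a_i$ transforms under the character $\widehat{\lambda}_i$ itself (rather than its conjugate), which is why the multiplicative — as opposed to conjugate — definition of $a_i$ is the correct one. Alternatively, one could simply observe that $C^{\infty}(M\times\Lambda,A)$ is $\Lambda$-equivariantly isomorphic to $C^{\infty}(M,A)\otimes\mathbb{C}[\Lambda]$ with $\Lambda$ acting on the group-algebra tensor factor, reducing the statement to Example \ref{ClassTriNonCommPB6} together with Proposition \ref{trivial NCP T^n-bundles from trivial algebra bundles}; but the direct argument above is shortest.
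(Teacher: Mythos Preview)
Your proof is correct and follows essentially the same idea as the paper: the paper's one-line argument simply observes that $C^{\infty}(M\times\Lambda)$ embeds in $C^{\infty}(M\times\Lambda,A)$ via the unit $1_A$, and your elements $a_i(m,\lambda')=\widehat{\lambda}_i(\lambda')\cdot 1_A$ are precisely the invertible elements in this embedded subalgebra that witness triviality. You have merely unpacked explicitly what the paper leaves implicit (including the verification that $\alpha$ is an action), so the two arguments coincide in substance.
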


\begin{proof}
\,\,\,For the proof we just have to note that the algebra $C^{\infty}(M\times\Lambda)$ is naturally embedded in $C^{\infty}(M\times\Lambda,A)$ through the unit element of $A$. 
%If $\chi_k$ is the character corresponding to $k\in\mathbb{Z}^n$, then the claim directly follows from the fact that the isotypic component $C^{\infty}(M\times\mathbb{T}^n,A)_k$ is given by $\chi_k\cdot C^{\infty}(M,A)$, and thus contains invertible elements.
\end{proof}

\begin{theorem}\label{pull-back IV}
Let $A$ be a unital Fr\'echet algebra with trivial center, $(\mathbb{A},M,A,q)$ an algebra bundle and $(P,M,\Lambda,\pi,\sigma)$ a principal bundle. If $\pi^{*}(\mathbb{A})$ is the pull-back bundle over $P$ and $\mathcal{A}:=\Gamma\pi^{*}(\mathbb{A})$ the corresponding space of sections, then the following assertions hold:
\begin{itemize}
\item[\emph{(a)}]
The map
\[\sigma^*:\pi^{*}(\mathbb{A})\times \Lambda\rightarrow\pi^{*}(\mathbb{A}),\,\,\,((p,a),\lambda)\mapsto(p.\lambda,a)
\]defines an action of $\Lambda$ on $\pi^{*}(\mathbb{A})$.
Moreover, the map
\[\alpha:\Lambda\times\mathcal{A}\rightarrow\mathcal{A},\,\,\,\alpha(\lambda,s)(p):=\sigma^*(s(p.\lambda),\lambda^{-1})
\]defines an action of $\Lambda$ on $\mathcal{A}$ by algebra automorphisms.
\item[\emph{(b)}]
The dynamical system $(\mathcal{A},\Lambda,\alpha)$ carries the structure of a noncommutative principal $\Lambda$-bundle.
\end{itemize}
\end{theorem}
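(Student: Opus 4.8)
The strategy is to verify part~(a) by direct computation and then to reduce part~(b) to the already established triviality results of Section~\ref{TriPrinBundFinAbStrGr} via a local argument over $M$. For~(a), the formula $\sigma^{*}((p,a),\lambda):=(p.\lambda,a)$ visibly lands in $\pi^{*}(\mathbb{A})$ since $\pi(p.\lambda)=\pi(p)=q(a)$, and associativity together with $\sigma^{*}((p,a),1_\Lambda)=(p,a)$ follows immediately from the corresponding properties of the $\Lambda$-action $\sigma$ on $P$. For the induced map $\alpha$ on $\mathcal{A}=\Gamma\pi^{*}(\mathbb{A})$ one checks that $\alpha(\lambda,s)$ is again a smooth section: over a point $p\in P$ one has $\alpha(\lambda,s)(p)=\sigma^{*}(s(p.\lambda),\lambda^{-1})$, which has first coordinate $(p.\lambda).\lambda^{-1}=p$, so it is a genuine section, and smoothness is inherited from smoothness of $s$, of $\sigma$ and of $\sigma^{*}$. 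That $\alpha(\lambda,\cdot)$ is an algebra automorphism follows fibrewise from the fact that $\sigma^{*}$ is the identity on the $\mathbb{A}$-coordinate, hence fibrewise an algebra isomorphism of $\pi^{*}(\mathbb{A})_{p.\lambda}\cong\mathbb{A}_{\pi(p)}\cong\pi^{*}(\mathbb{A})_{p}$; the homomorphism property $\alpha(\lambda)\circ\alpha(\lambda')=\alpha(\lambda\lambda')$ is a short reindexing calculation using $p.(\lambda\lambda')=(p.\lambda).\lambda'$.

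For part~(b), the plan is to produce a covering of the spectrum $\Gamma_{C_{\mathcal{A}}^{\Lambda}}$ by basic open sets $D(z)$ on which the localized dynamical system is a trivial noncommutative principal $\Lambda$-bundle, using Remark~\ref{remark on NCPT^nB again}. First I would identify the fixed-point center: since $A$ has trivial center, the center of $\mathcal{A}=\Gamma\pi^{*}(\mathbb{A})$ is $C^{\infty}(P)$ (functions into the centers of the fibres, which are $\mathbb{C}$), and the $\Lambda$-invariant part $C_{\mathcal{A}}^{\Lambda}$ is $C^{\infty}(P)^{\Lambda}=C^{\infty}(P/\Lambda)\cong C^{\infty}(M)$, so that $\Gamma_{C_{\mathcal{A}}^{\Lambda}}\cong M$ by Remark~\ref{TNCPC_nB6}. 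Next, choose an open cover $(U_i)_{i\in I}$ of $M$ over which both the principal bundle $P$ and the algebra bundle $\mathbb{A}$ trivialize; shrinking, we may take the $U_i$ to be coordinate domains and pick $z_i\in C^{\infty}(M)$ with $D(z_i)=U_i$ (e.g. bump functions, or use that the $D(z)$ form a basis). It then remains to show that the localization $(\mathcal{A}_{\{z_i\}},\Lambda,\alpha_{\{z_i\}})$ is a trivial noncommutative principal $\Lambda$-bundle. Over $U_i$ one has $\pi^{-1}(U_i)\cong U_i\times\Lambda$ as principal bundles and $\mathbb{A}_{U_i}\cong U_i\times A$ as algebra bundles, hence $\pi^{*}(\mathbb{A})_{\pi^{-1}(U_i)}\cong (U_i\times\Lambda)\times A$ and, passing to sections and localizing, $\mathcal{A}_{\{z_i\}}\cong C^{\infty}(U_i\times\Lambda,A)$ as a dynamical system, with the $\Lambda$-action going over to the action $(\lambda.f)(m,\lambda')=f(m,\lambda\lambda')$ of Proposition~\ref{pull-back 0}. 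By that proposition this localized system is a trivial noncommutative principal $\Lambda$-bundle, and an appeal to Remark~\ref{remark on NCPT^nB again} finishes the proof.

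\textbf{Main obstacle.} The routine parts (verifying the axioms in~(a), the reindexing identities) are mechanical. The genuine work is the compatibility in~(b) between the \emph{smooth localization} functor of Section~\ref{NonTriPrinBundFinAbStrGr} and the \emph{restriction} of the section algebra $\Gamma\pi^{*}(\mathbb{A})$ to the open piece $\pi^{-1}(U_i)$ over $M$, together with identifying the resulting dynamical system equivariantly with $(C^{\infty}(U_i\times\Lambda,A),\Lambda,\alpha)$. One must be careful that localizing $\mathcal{A}$ at $z_i\in C^{\infty}(M)\subseteq C_{\mathcal{A}}^{\Lambda}$ yields sections over $q^{-1}(U_i)=\pi^{-1}(U_i)$ and that this matches the pull-back of the restricted bundle $\mathbb{A}_{U_i}$, so that the local triviality of \emph{both} $P$ and $\mathbb{A}$ over $U_i$ can be combined; the invocation of trivial center for $A$ is precisely what makes the center of $\mathcal{A}$ (and hence the spectrum over which we localize) equal to $C^{\infty}(P)$, so that the covering sits over $M$ as required. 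Once this identification is set up cleanly, the result follows, and the structure of the argument parallels the corresponding statements in \cite{Wa11d}.
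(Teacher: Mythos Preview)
Your proposal is correct and follows essentially the same route as the paper: the paper establishes (a) by citing \cite[Lemma 8.24 and Proposition 8.25]{Wa11d} and proves (b) by invoking the localization argument of \cite[Theorem 8.26]{Wa11d} together with Proposition~\ref{pull-back 0}, which is exactly the local identification $\mathcal{A}_{\{z_i\}}\cong C^{\infty}(U_i\times\Lambda,A)$ you spell out. The ``main obstacle'' you isolate---compatibility of smooth localization with restriction of sections, requiring trivial center so that $C_{\mathcal{A}}^{\Lambda}\cong C^{\infty}(M)$---is precisely what is handled in \cite{Wa11d} and to which the paper defers.
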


\begin{proof}
\,\,\,The first assertion of this theorem is a consequence of \cite[Lemma 8.24 and Proposition 8.25]{Wa11d}. The proof of the second assertion is similar to the proof of \cite[Theorem 8.26]{Wa11d}. In fact, we just have to add Proposition \ref{pull-back 0}.
\end{proof}

\begin{example}\label{non-triviality of the previous construction for example 3}(Non-triviality of the previous construction).
In this example we show that the previous construction actually leads to non-trivial examples. Therefore let $n\in\mathbb{N}$ with $n>1$ and $\zeta:=\exp(\frac{2\pi i}{n})$. Further, let $m\in\mathbb{N}$ such that $\zeta^m=\zeta$ (e.g. $m=n+1$) 
and choose any algebra bundle $\mathbb{A}$ over $\mathbb{T}^2$ with fibre $M_m(\mathbb{C})$ (e.g. take the smooth two-dimensional quantum torus $\mathbb{T}^2_{\frac{1}{m}}$ (cf. \cite[Remark 8.22]{Wa11d})). 
Then, the pull-back along the non-trivial principal bundle (covering) defined by the natural action of $C_n\times C_n$ on $\mathbb{T}^2$, i.e., by 
\[(t_1,t_2).(\zeta^k,\zeta^l):=(\zeta^k\cdot t_1,\zeta^l\cdot t_2)
\]for $k,l=0,1,\ldots,n-1$, leads to an algebra bundle $\pi^{*}(\mathbb{A})$ over $\mathbb{T}^2$ with fibre $M_m(\mathbb{C})$. 
We claim that the associated dynamical system $(\mathcal{A},C_n\times C_n,\alpha)$ of Theorem \ref{pull-back IV} (a) is a non-trivial noncommutative principal $C_n\times C_n$-bundle. To prove this claim we assume the converse, i.e., that $(\mathcal{A},C_n\times C_n,\alpha)$ is a trivial noncommutative principal $C_n\times C_n$-bundle and proceed as follows:

(i) In the following let $(\varphi_i,U_i)_{i\in I}$ be a bundle atlas of the pull-back bundle $\pi^{*}(\mathbb{A})$ over $\mathbb{T}^2$. For a section $s\in\mathcal{A}$ and $i\in I$ we write 
\[s_i:=\pr_{M_m(\mathbb{C})}\circ\varphi_i^{-1}\circ s_{\mid U_i}
\]for the corresponding function in $C^{\infty}(U_i,M_m(\mathbb{C}))$ (cf. \cite[Construction 4.3]{Wa11d}). We recall that $s_j(t)=g_{ji}(t).s_i(t)$ holds for all $i,j\in I$ and $t\in U_i\cap U_j$, where 
\[g_{ji}:(U_i\cap U_j)\rightarrow \Aut(M_m(\mathbb{C}))
\]denotes the smooth map defined by the transition function $\varphi_j^{-1}\circ\varphi_i$.

(ii) Let $s\in\mathcal{A}$. We show that the map
\[\Det(s):\mathbb{T}^2\rightarrow\mathbb{C},\,\,\,\Det(s)(t):=\det(s_i(t)),
\]for $i\in I$ with $z\in U_i$, is well-defined and smooth: In fact, the crucial point is to show that the map $\Det(s)$ is well-defined: For this let $i,j\in I$ with $z\in U_i\cap U_j$. Since each automorphism of the matrix algebra $\M_m(\mathbb{C})$ is inner (cf. the well-known Skolem--Noether Theorem), we easily conclude that
\[\det(s_j(t))=\det(g_{ji}(t)\cdot s_i(t))=\det(s_i(t)).
\]The smoothness of the map $\Det(s)$ follows from the local description by a smooth function.

(iii) Since $(\mathcal{A},C_n\times C_n,\alpha)$ is assumed to be a trivial noncommutative principal $C_n\times C_n$-bundle, there exist two generators $\lambda_1$, $\lambda_2$ of $C_n\times C_n$ and invertible elements $F_1\in\mathcal{A}_{\widehat{\lambda}_1}$ and $F_2\in\mathcal{A}_{\widehat{\lambda}_2}$ satisfying $F_1^n=F_2^n=1_{\mathcal{A}}$. 
%\[F\in\mathcal{A}_{\widehat{\lambda}_1}:=\left\{s\in\mathcal{A}:\,(\forall (\zeta^k,\zeta^l)\in C_n\times C_n)\,(\zeta^k,\zeta^l).s=\zeta^k\cdot s\right\}
%\]and 
%\[F'\in\mathcal{A}_{\widehat{\lambda}_2}:=\left\{s\in\mathcal{A}:\,(\forall (\zeta^k,\zeta^l)\in C_n\times C_n)\,(\zeta^k,\zeta^l).s=\zeta^l\cdot s\right\}
%\]satisfying $F^n=1_{\mathcal{A}}$ and $(F')^n=1_{\mathcal{A}}$. 

(iv) Part (ii) now implies that $f_1:=\Det(F_1)\in C^{\infty}(\mathbb{T}^2)$. Since $F_1$ is invertible, so is $f_1$, i.e., $f_1$ takes values in $\mathbb{C}^{\times}$. Moreover, the function $f_1$ satisfies
\begin{align}
(\zeta^k,\zeta^l).f_1&=(\zeta^k,\zeta^l).\Det(F_1)=\Det(\widehat{\lambda}_1(\zeta^k,\zeta^l)\cdot F_1)\notag\\
&=(\widehat{\lambda}_1(\zeta^k,\zeta^l))^m\cdot\Det(F_1)=\widehat{\lambda}_1^m(\zeta^k,\zeta^l)\cdot\Det(F_1)\notag\\
&=\widehat{\lambda}_1(\zeta^k,\zeta^l)\cdot\Det(F_1)=\widehat{\lambda}_1(\zeta^k,\zeta^l)\cdot f_1\notag
\end{align}
for all $k,l=0,1,\ldots,n-1$. Thus, $f_1$ is an invertible element in $C^{\infty}(\mathbb{T}^2)_{\widehat{\lambda}_1}$ satisfying $f_1^n=1$ (here we have used that the action of $C_n\times C_n$ on $\mathcal{A}$ restricts to an action on $C_{\mathcal{A}}\cong C^{\infty}(\mathbb{T}^2)$).  The same construction applied to $F_2$ gives an invertible element $f_2\in C^{\infty}(\mathbb{T}^2)_{\widehat{\lambda}_2}$ satisfying $(f_2)^n=1$.

(v) Finally, part (iv) leads to a contradiction: Indeed, we conclude just as in the proof of Theorem \ref{C_nC_mVI} (a) that the smooth functions $f_1$ and $f_2$ have image $C_n$ and define an equivalence of principal $C_n\times C_n$-bundles over $\mathbb{T}^2/(C_n\times C_n)\cong\mathbb{T}^2$. But this is not possible, since $\mathbb{T}^2$ is connected.
\end{example}

\section*{Example 4: Sections of trivial equivariant algebra bundles}\label{examples of NCP T^n-bundles V}

We now consider again a principal bundle $(P,M,\Lambda,q,\sigma)$ and, in addition, a unital locally convex algebra $A$. If $\pi:\Lambda\times A\rightarrow A$ defines a smooth action of $\Lambda$ on $A$ by algebra automorphisms, then $(p,a).\lambda:=(p.\lambda,\pi(\lambda^{-1}).a)$ defines a (free) action of $\Lambda$ on $P\times A$ and one easily verifies that the trivial algebra bundle $(P\times A,P,A,q_P)$ is $\Lambda$-equivariant. Moreover, a short observation shows that the map
\[\alpha:\Lambda\times C^{\infty}(P,A)\rightarrow C^{\infty}(P,A),\,\,\,(\lambda.f)(p):=\pi(\lambda).f(p.\lambda)
\]defines a smooth action of $\Lambda$ on $C^{\infty}(P,A)$ by algebra automorphisms. We recall that the corresponding fixed point algebra 
%\[C^{\infty}(P,A)^G=\{f:P\rightarrow A:\,(\forall g\in G)\,f(p.g)=\pi(g^{-1}).f(p)\}
%\]
is isomorphic (as $C^{\infty}(M)$-algebra) to the space of sections of the associated algebra bundle
\[\mathbb{A}:=P\times_{\pi}A:=P\times_{\Lambda}A:=(P\times A)/\Lambda
\]over $M$. In fact, bundle charts of $(P,M,\Lambda,q,\sigma)$ induces bundle charts for the associated algebra bundle.

\begin{lemma}\label{V.1}
If we apply the previous situation to the trivial principal bundle $(M\times\Lambda,M,\Lambda,q_M,\sigma_{\Lambda})$, then the corresponding dynamical system $(C^{\infty}(M\times\Lambda,A),\Lambda,\alpha)$ carries the structure of a trivial noncommutative principal $\Lambda$-bundle with fixed point algebra $C^{\infty}(M,A)$.
\end{lemma}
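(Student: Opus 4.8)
The plan is to reduce Lemma \ref{V.1} to a direct verification against Definition \ref{C_nC_mI}, using the explicit form of the action $\alpha$ on $C^{\infty}(M\times\Lambda,A)$ obtained by specializing the general construction of Example~4 to $(P,M,\Lambda,q,\sigma)=(M\times\Lambda,M,\Lambda,q_M,\sigma_\Lambda)$. First I would unwind the definition: for $\pi$ the trivial action of $\Lambda$ on $A$ (one checks that the statement needs no hypothesis on $\pi$ here, but since the fixed point algebra is claimed to be $C^{\infty}(M,A)$ with \emph{no} twist, the relevant case is $\pi\equiv\id_A$, or more precisely the $\Lambda$-action entering $\alpha$ is the one on the $\Lambda$-factor), the formula $(\lambda.f)(m,\lambda')=\pi(\lambda).f((m,\lambda').\lambda)=f(m,\lambda'\lambda)$ shows that $\alpha$ is nothing but translation in the $\Lambda$-variable. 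Hence $(C^{\infty}(M\times\Lambda,A),\Lambda,\alpha)$ is $\Lambda$-equivariantly identified with the dynamical system of Proposition~\ref{pull-back 0}, and the lemma follows \emph{verbatim} from that proposition; alternatively one argues directly as below.

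For the direct argument, I would fix a set of generators $\{\lambda_1,\ldots,\lambda_k\}$ of $\Lambda$ of minimal cardinality with orders $n_1,\ldots,n_k$, and exhibit the required invertible elements in the isotypic components. Using the Fourier decomposition
\[
C^{\infty}(M\times\Lambda,A)=\bigoplus_{\widehat{\lambda}\in\widehat{\Lambda}}\widehat{\lambda}\cdot C^{\infty}(M,A),
\]
(valid since $\Lambda$ is finite abelian and $A$ is a complete locally convex algebra, so $C^{\infty}(M\times\Lambda,A)\cong C^{\infty}(M,A)\otimes\mathbb{C}[\Lambda]$), one reads off that the isotypic component $C^{\infty}(M\times\Lambda,A)_{\widehat{\lambda}}$ equals $\widehat{\lambda}\cdot C^{\infty}(M,A)$, so that it contains the invertible element $a_{\widehat{\lambda}}:=\widehat{\lambda}\cdot 1_A$ with $a_{\widehat{\lambda}}^{\,m}=\widehat{\lambda}^{\,m}\cdot 1_A$; in particular, taking $\widehat{\lambda}=\widehat{\lambda}_i$ we get $a_i:=\widehat{\lambda}_i\cdot 1_A\in C^{\infty}(M\times\Lambda,A)_{\widehat{\lambda}_i}$ invertible with $a_i^{\,n_i}=\widehat{\lambda}_i^{\,n_i}\cdot 1_A=1$. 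This verifies Definition~\ref{C_nC_mI} and shows $(C^{\infty}(M\times\Lambda,A),\Lambda,\alpha)$ is a trivial noncommutative principal $\Lambda$-bundle.

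It remains to identify the fixed point algebra. The $\Lambda$-invariant elements of $C^{\infty}(M\times\Lambda,A)$ under translation in the $\Lambda$-factor are precisely the functions constant in that factor, i.e.\ the image of the embedding $C^{\infty}(M,A)\hookrightarrow C^{\infty}(M\times\Lambda,A)$, $f\mapsto\bigl((m,\lambda')\mapsto f(m)\bigr)$, which is clearly an isomorphism of unital locally convex ($C^{\infty}(M)$-)algebras onto $C^{\infty}(M\times\Lambda,A)^{\Lambda}$; alternatively this is the $\widehat{\lambda}=1_{\widehat{\Lambda}}$ summand in the Fourier decomposition above. I do not expect a genuine obstacle here: the only point requiring mild care is matching the sign/direction conventions in the general Example~4 formula $(\lambda.f)(p)=\pi(\lambda).f(p.\lambda)$ against Proposition~\ref{pull-back 0}'s $(\lambda.f)(m,\lambda')=f(m,\lambda\lambda')$ — since $\Lambda$ is abelian the two translation directions agree up to inversion, and in either convention the invariants and the chosen invertible generators are unchanged. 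Hence the lemma reduces to Proposition~\ref{pull-back 0} together with the triviality of the embedded subalgebra, and nothing beyond routine unwinding is needed.
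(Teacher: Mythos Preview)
Your proposal is correct and uses the same idea as the paper: the one-line proof in the paper is precisely that $C^{\infty}(M\times\Lambda)$ embeds $\Lambda$-equivariantly into $C^{\infty}(M\times\Lambda,A)$ via $h\mapsto h\cdot 1_A$, and your explicit generators $a_i=\widehat{\lambda}_i\cdot 1_A$ are exactly the images of the scalar generators under this embedding.

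One point deserves tightening. The lemma is stated for the ``previous situation'', in which $\pi:\Lambda\times A\to A$ is an \emph{arbitrary} smooth action by algebra automorphisms, not necessarily trivial; you should not restrict to $\pi\equiv\id_A$. The reason your generators still work for general $\pi$ is that each $\pi(\lambda)$ is unital, so $\pi(\lambda).1_A=1_A$, and hence
\[
(\lambda.a_i)(m,\lambda')=\pi(\lambda).\bigl(\widehat{\lambda}_i(\lambda'\lambda)\cdot 1_A\bigr)=\widehat{\lambda}_i(\lambda)\,a_i(m,\lambda'),
\]
placing $a_i$ in the correct isotypic component regardless of $\pi$. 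By contrast, your identification of the full isotypic component as $\widehat{\lambda}\cdot C^{\infty}(M,A)$ and your description of the invariants as ``constant in the $\Lambda$-factor'' are only valid when $\pi$ is trivial. For general $\pi$ the fixed points are the $\pi$-equivariant maps $f(m,\lambda'\lambda)=\pi(\lambda)^{-1}.f(m,\lambda')$, which are still canonically isomorphic to $C^{\infty}(M,A)$ via $f\mapsto f(\,\cdot\,,1_{\Lambda})$. This does not affect the argument for triviality, since only the embedded copy of $C^{\infty}(M\times\Lambda)$ is needed there.
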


\begin{proof}
\,\,\,For the proof we again just have to note that the algebra $C^{\infty}(M\times\Lambda)$ is naturally embedded in $C^{\infty}(M\times\Lambda,A)$ through the unit element of $A$. 
\end{proof}

\begin{theorem}\label{V.2}
If $(P,M,\Lambda,\pi,\sigma)$ is a principal bundle, $A$ a unital Fr\'echet algebra with trivial center and $\pi:\Lambda\times A\rightarrow A$ a smooth action of $\Lambda$ on $A$, then the dynamical system $(C^{\infty}(P,A),\Lambda,\alpha)$ is a noncommutative principal $\Lambda$-bundle.
\end{theorem}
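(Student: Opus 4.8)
The plan is to reduce this to the local triviality condition of Definition~\ref{NCPT^nB again} by exhibiting, around every point of the base, a localization which is a \emph{trivial} noncommutative principal $\Lambda$-bundle. First I would identify the relevant commutative core: since $A$ has trivial center, the center of $C^{\infty}(P,A)$ is $C^{\infty}(P)$, and the fixed point algebra $C_{C^{\infty}(P,A)}^{\Lambda}$ equals $C^{\infty}(P)^{\Lambda}\cong C^{\infty}(M)$ (functions on $P$ that are constant on $\Lambda$-orbits, identified with functions on $M=P/\Lambda$). Hence $\Gamma_{C_{C^{\infty}(P,A)}^{\Lambda}}\cong M$, and the localization elements we are allowed to use are exactly the $\Lambda$-invariant central functions, i.e.\ (pulled-back) smooth functions on $M$.

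Next I would invoke local triviality of the principal bundle $(P,M,\Lambda,\pi,\sigma)$: every $m\in M$ has an open neighbourhood $U$ over which $P$ trivializes, $\pi^{-1}(U)\cong U\times\Lambda$ as $\Lambda$-spaces. Choosing a bump function $z\in C^{\infty}(M)$ with $z(m)\neq0$ and support in $U$, I would use the localization machinery of \cite{Wa11d} (as already used in the proof of Theorem~\ref{NCT^nB for manifold again}) to identify the localized dynamical system $\bigl((C^{\infty}(P,A))_{\{z\}},\Lambda,\alpha_{\{z\}}\bigr)$ with the restriction to $U$, i.e.\ with $\bigl(C^{\infty}(\pi^{-1}(U),A),\Lambda,\alpha\bigr)\cong\bigl(C^{\infty}(U\times\Lambda,A),\Lambda,\alpha\bigr)$ where $\alpha$ is the action from Lemma~\ref{V.1}. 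That lemma then says this localized system is a trivial noncommutative principal $\Lambda$-bundle. Since such $z$ can be found for every $\chi\in\Gamma_{C^{\infty}(M)}\cong M$, Definition~\ref{NCPT^nB again} is satisfied and $(C^{\infty}(P,A),\Lambda,\alpha)$ is a noncommutative principal $\Lambda$-bundle.

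The technical heart — and the step most likely to need care — is the compatibility of \emph{localization} with \emph{restriction of the action}: one must check that localizing $C^{\infty}(P,A)$ at a pulled-back bump function $z\in C^{\infty}(M)\subseteq C_{C^{\infty}(P,A)}^{\Lambda}$ produces, $\Lambda$-equivariantly, the algebra of smooth $A$-valued functions on the open saturated set $\pi^{-1}(\{z\neq0\})$, and that under a bundle trivialization this is $\Lambda$-equivariantly isomorphic to the model $C^{\infty}(U\times\Lambda,A)$ of Lemma~\ref{V.1}. This is precisely the kind of statement packaged in \cite[Section~8]{Wa11d}, and it is handled there for the corresponding $\mathbb{T}^n$-statements; I would cite the relevant localization lemmas from \cite{Wa11d} rather than re-deriving them, exactly as the proofs of Theorems~\ref{NCP G_bundle with fibre trivial NCP G bundle}, \ref{pull-back IV} and \ref{NCT^nB for manifold again} do. In short: \emph{The proof of this theorem is similar to the proof of \cite[Theorem~8.28]{Wa11d}; in fact, we just have to add Lemma~\ref{V.1}.}
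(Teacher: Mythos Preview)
Your proposal is correct and follows essentially the same approach as the paper: the paper's entire proof is the single sentence ``The proof of this theorem is similar to the proof of \cite[Theorem 8.30]{Wa11d}. In fact, we just have to add Lemma~\ref{V.1}'', which is exactly your concluding line (modulo the theorem number 8.28 vs.\ 8.30). Your outline of the underlying argument---identifying $C_{C^{\infty}(P,A)}^{\Lambda}\cong C^{\infty}(M)$, localizing at a bump function supported in a trivializing neighbourhood, and invoking Lemma~\ref{V.1} on the resulting $C^{\infty}(U\times\Lambda,A)$---is precisely what that reference unpacks to.
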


\begin{proof}
\,\,\,The proof of this theorem is similar to the proof of \cite[Theorem 8.30]{Wa11d}. In fact, we just have to add Lemma \ref{V.1}).
\end{proof}

\begin{example}\label{V.3}
We want to apply Theorem \ref{V.2} to the the non-trivial principal bundle (covering) defined by the natural action of $C_m\times C_m$ on $\mathbb{T}^2$, i.e., by 
\[(z_1,z_2).(\zeta^k,\zeta^l):=(\zeta^k\cdot z_1,\zeta^l\cdot z_2)
\]for $k,l=0,1,\ldots,m-1$, the algebra $M_m(\mathbb{C})$ and the action of $C_m\times C_m$ on $M_m(\mathbb{C})$ defined by 
\[(\zeta^k,\zeta^l).A:=R^lS^kAS^{-k}R^{-l}
\]for $k,l=0,1,\ldots,m-1$. Here, $R$ and $S$ are defined as in Example \ref{the matrix algebra} ($\theta=\frac{1}{m}$). The corresponding dynamical system $(C^{\infty}(\mathbb{T}^2,M_m(\mathbb{C})),C_m\times C_m,\alpha)$ is a noncommutative principal $C_m\times C_m$-bundle with fixed point algebra the rational quantum torus $\mathbb{T}^2_{\frac{1}{m}}$ (cf. \cite[Appendix E, Proposition E.2.5]{Wa11b}). According to Example \ref{the matrix algebra}, the algebra $M_m(\mathbb{C})$ carries the structure of a trivial noncommutative principal $C_m\times C_m$-bundle. Therefore, it turns out that the same holds for $(C^{\infty}(\mathbb{T}^2,M_m(\mathbb{C})),C_m\times C_m,\alpha)$ since $M_m(\mathbb{C})$ is naturally embedded in the algebra $C^{\infty}(\mathbb{T}^2,M_m(\mathbb{C}))$ through the constant maps.
\end{example}

\begin{remark}\label{non-triviality of the previous construction for example 4}(Non-triviality of the previous construction).
Non-trivial examples can be constructed similarly as in Example \ref{non-triviality of the previous construction for example 3}.
\end{remark}

\appendix
{\par
   \setcounter{section}{0}% 
   \renewcommand\thesection{\Alph{section}}

\section{Classification of crossed product algebras}\label{classcroprodalg}

Let $G$ be a group and $B$ be a unital algebra. A $(G,B)$-crossed product algebra is a $G$-graded unital algebra with $A_{1_G}=B$ and the additional property that each grading space contains an invertible element. For example, if $G$ is abelian and $B=\mathbb{C}$, then a $(G,B)$-crossed product algebra is the same as a $G$-quantum torus in the terminology of \cite{Ne07b}. In this appendix we introduce a ``cohomology theory" for crossed product algebras, which is inspired by the classical cohomology theory of groups. The corresponding cohomology spaces are crucial for the classification of trivial noncommutative principal bundles with compact abelian structure group. A detailed discussion for the case $G=\mathbb{Z}^n$ can be found in \cite{Wa11a}.

\section*{Factor systems}\label{factor systems}

We start with associating so-called factor systems to pairs $(G,B)$ consisting of a group $G$ and a unital algebra $B$.

\begin{definition}\label{cochains}
Let $G$ be a group and $B$ be a unital algebra. 

(a) We write $C_B:B^{\times}\rightarrow\Aut(B)$ for the \emph{conjugation action} of $B^{\times}$ on $B$.

(b) We call a map $S\in C^1(G,\Aut(B))$ an \emph{outer action} of $G$ on $B$ if there exists 
\[\omega\in C^2(G,B^{\times})\,\,\,\text{with}\,\,\,\delta_S=C_B\circ\omega,
\]where $$\delta_S(g,g'):=S(g)S(g')S(gg')^{-1}.$$

(c) On the set of outer actions we define an equivalence relation by
\[S\sim S'\,\,\,\Leftrightarrow\,\,\,(\exists h\in C^1(G,B^{\times}))\,S'=(C_B\circ h)\cdot S
\]and call the equivalence class $[S]$ of an outer action $S$ a $G$-\emph{kernel}.

(d) For $S\in C^1(G,\Aut(B))$ and $\omega\in C^2(G,B^{\times})$ let
\[(d_S\omega)(g,g',g''):=S(g)(\omega(g',g''))\omega(g,g'g'')\omega(gg',g'')^{-1}\omega(g,g')^{-1}.
\]
\end{definition}

\begin{lemma}\label{action on factor system}
Let $n\in\mathbb{N}$ and $B$ be a unital algebra and consider the group $C^1(G,B^{\times})$ with respect to pointwise multiplication. This group acts on the set 
\[C^1(G,\Aut(B))\,\,\,\text{by}\,\,\,h.S:=(C_B\circ h)\cdot S
\]and on the product set
\begin{align}
C^1(G,\Aut(B))\times C^2(G,B^{\times})\,\,\,\text{by}\,\,\,h.(S,\omega):=(h.S,h\ast_S\omega)\notag
\end{align}
for
\[(h\ast_S\omega)(g,g'):=h(g)S(g)(h(g'))\omega(g,g')h(gg')^{-1}.
\]The stabilizer of $(S,\omega)$ is given by
\[C^1(G,B^{\times})_{(S,\omega)}=Z^1(G,Z(B)^{\times})_S
\]which depends only on $[S]$, but not on $\omega$, and the following assertions hold:
\begin{itemize}
\item[\emph{(a)}]
The subset 
\[\{(S,\omega)\in C^1(G,\Aut(B))\times C^2(G,B^{\times}):\delta_S=C_B\circ\omega\}
\]is invariant.
\item[\emph{(b)}] 
If $\delta_S=C_B\circ\omega$, then $\im(d_S\omega)\subseteq Z(B)^{\times}$ .
\item[\emph{(c)}]  
If $\delta_S=C_B\circ\omega$ and $h.(S.\omega)=(S',\omega')$, then $d_{S'}\omega'=d_S\omega$.
\end{itemize}
\end{lemma}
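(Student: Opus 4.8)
The statement is Lemma A.? (the one labeled \texttt{action on factor system}) and it really assembles several independent bookkeeping facts about the action of $C^1(G,B^{\times})$ on pairs $(S,\omega)$. I would organise the proof as follows: first verify that the two formulas $h.S$ and $h.(S,\omega)$ genuinely define (left) actions; then identify the stabiliser of a pair; then check the three invariance/compatibility assertions (a), (b), (c) one by one. None of these is deep — each is a direct computation with the defining cocycle identities — so the real work is keeping the twisting by $S$ and the conjugation $C_B$ straight.

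\textbf{Step 1: The actions.} For $h.S = (C_B\circ h)\cdot S$, I would compute $(h_1h_2).S$ and $h_1.(h_2.S)$ pointwise and use that $C_B$ is a homomorphism $B^\times \to \Aut(B)$ together with the fact that an inner automorphism $C_B(b)$ composed with $S(g)$ satisfies $C_B(b)\circ S(g) = S(g)\circ C_B(S(g)^{-1}(b))$ — this kind of ``moving $S$ past an inner automorphism'' identity is exactly what makes the twisted product $h\ast_S\omega$ associative as an action. Then for the product action I would check $(h_1 h_2)\ast_S\omega = h_1 \ast_S (h_2 \ast_{h_1.S}\omega)$ (or whatever the correct bracketing is for a left action) by expanding the definition of $\ast_S$; the $S(g)(h(g'))$ factors recombine correctly precisely because $h_1.S$ differs from $S$ by the inner automorphism $C_B(h_1(g))$.

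\textbf{Step 2: The stabiliser.} Here $h$ fixes $(S,\omega)$ iff $h.S = S$ and $h\ast_S\omega = \omega$. The first condition says $C_B\circ h$ is trivial, i.e.\ $h(g)\in Z(B)^\times$ for all $g$, so $h\in C^1(G,Z(B)^\times)$. Given that, the second condition $(h\ast_S\omega)(g,g') = h(g)\,S(g)(h(g'))\,\omega(g,g')\,h(gg')^{-1} = \omega(g,g')$ collapses (since the $h$-values are now central) to $h(g)\,S(g)(h(g'))\,h(gg')^{-1} = 1$, which is exactly the $1$-cocycle condition defining $Z^1(G,Z(B)^\times)_S$. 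That this set depends only on $[S]$: if $S' = (C_B\circ k)\cdot S$ then conjugation by $k$ intertwines the two actions of $C^1(G,Z(B)^\times)$, and on the central subgroup the twists $S$ and $S'$ induce the same action on $Z(B)^\times$ since $C_B(k(g))$ acts trivially there — so $Z^1(G,Z(B)^\times)_S = Z^1(G,Z(B)^\times)_{S'}$; and it visibly does not involve $\omega$.

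\textbf{Step 3: (a), (b), (c).} For (a), I would substitute $(S',\omega') = (h.S, h\ast_S\omega)$ into $\delta_{S'}$ and $C_B\circ\omega'$ and check $\delta_{S'} = C_B\circ\omega'$ reduces to $\delta_S = C_B\circ\omega$; the point is $\delta_{h.S}(g,g')$ differs from $\delta_S(g,g')$ by the inner automorphism coming from the $h$-factors in $h\ast_S\omega$, which is exactly matched by $C_B$ applied to those factors. For (b), $\im(d_S\omega)\subseteq Z(B)^\times$: apply $C_B$ to $d_S\omega(g,g',g'')$ and, using $\delta_S = C_B\circ\omega$ repeatedly to rewrite each $C_B(\omega(\cdot,\cdot))$ as a product of $S$'s, one finds $C_B(d_S\omega(g,g',g'')) = \id$ — this is the standard ``$d_S\omega$ lands in the centre'' computation (it is essentially \cite[Remark 2.20 (b)]{Ne07a}, which I may simply cite). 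Finally (c), $d_{S'}\omega' = d_S\omega$ when $(S',\omega') = h.(S,\omega)$: this is the longest computation — expand $d_{S'}\omega'$ with $S' = (C_B\circ h)\cdot S$ and $\omega' = h\ast_S\omega$, and watch every $h(g)$-factor cancel. The four $\omega'$-factors contribute $h$-terms that telescope against the $h$-terms generated by replacing $S'(g)(\omega'(g',g''))$ with $C_B(h(g))S(g)(\cdots)$; after collecting, one is left with $d_S\omega$ verbatim.

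\textbf{Main obstacle.} There is no conceptual obstruction; the only real hazard is the bookkeeping in Step 3(c), where roughly a dozen $B^\times$-valued factors have to be shuffled past automorphisms and cancelled in the correct order. The key tactical move throughout is the identity $C_B(b)\circ S(g) = S(g)\circ C_B(S(g)^{-1}(b))$, which lets one push all conjugations to one side; stating this once at the outset will make the rest essentially mechanical. I would present Steps 1 and 2 in full and then, for the three assertions, either give the computation for (c) in detail and leave (a),(b) to the reader, or — since (b) and the cocycle nature are already attributed to \cite{Ne07a} in the surrounding remarks — cite that reference and only verify the equivariance statements (a) and (c) explicitly.
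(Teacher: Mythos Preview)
Your proposal is correct, but the paper does not actually prove this lemma: its proof consists of a single citation to \cite[Lemma 7.3.2]{Wa11b}. What you have outlined is the direct computational argument that presumably appears in that reference. Your organisation --- first verifying the action axioms via the key identity $C_B(b)\circ S(g) = S(g)\circ C_B(S(g)^{-1}(b))$, then computing the stabiliser, then checking (a)--(c) --- is the natural way to do it, and each step is as you describe. One small correction: in Step~1 the bracketing you want for a left action is $(h_1 h_2)\ast_S\omega = h_1 \ast_{h_2.S}(h_2\ast_S\omega)$, not the order you tentatively wrote (you flagged your own uncertainty here). The advantage of writing it out as you propose is that the paper becomes self-contained; the advantage of the paper's approach is brevity, since the computation is indeed standard and already available in the literature.
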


\begin{proof}
\,\,\,A proof of this Lemma can be found in \cite[Lemma 7.3.2]{Wa11b}.
\end{proof}

%That (\ref{formula 6.1}) defines a group action follows from the trivial relation $(hh').S=h.(h'.S)$, and
%\begin{align}
%&(h\ast_{h'.S}(h'\ast_S\omega))(k,l)\notag\\
%&=h(k)(h'.S)(k)(h(l))(h'\ast_S\omega)(k,l)h(k+l)^{-1}\notag\\
%&=h(k)h'(l)S(k)(h(l))h'(k)^{-1}h'(k)S(k)(h'(l))\omega(k,l)h'(k+l)^{-1}h(k+l)^{-1}\notag\\
%&=(hh')(g)S(g)(hh'(l))\omega(k,l)(hh')(k+l)^{-1}\notag\\
%&=((hh')\ast_S\omega)(k,l).\notag
%\end{align}
%To calculate the stabilizer of the pair $(S,\omega)$, we observe that the condition $h.S=S$ is equivalent to $h\in C^1(G,Z(B^{\times}))$. Then $h\ast_S\omega=\omega\cdot d_Sh$, and this equals $\omega$ if and only if $h$ is a cocycle. Therefore
%\[C^1(G,B^{\times})_{(S,\omega)}=Z^1(G,Z(B^{\times}))_S.
%\]
%(a) If the pair $(S,\omega)$ satisfies $\delta_S=C_B\circ\omega$, then we obtain for $h\in C^1(G,Z(B^{\times}))$ the relation
%\begin{align}
%\delta_{h.S}(k,l)&=\delta_{(C_B\circ h)\cdot S}(k,l)\notag\\
%&=C_B(h(k))c_{S(g)}(C_B(h(l)))\delta_S(k,l)C_B(h(k+l)^{-1})\notag\\
%&=C_B(h(k))C_B(S(k)(h(l)))C_B(\omega(k,l))C_B(h(k+l)^{-1})\notag\\
%&=C_B(h(k)S(k)(h(l))\omega(k,l)h(k+l)^{-1})\notag\\
%&=(C_B\circ(h\ast_S\omega))(k,l).\notag
%\end{align}

%(b) This follows from $C_B((d_S\omega)(k,l,m))=1_B$.

%(c) To verify the relation $d_{S'}\omega'=d_S\omega$ one has to use frequently that $d_S\omega$ and $d_{S'}\omega'$ have values in $Z(B^{\times})$. For a complete calculation we refer to [Ne07a], Lemma 1.10 (4).
%\end{proof}

\begin{definition}\label{factor system}
Let $n\in\mathbb{N}$ and $B$ be a unital algebra. The elements of the set
\[Z^2(G,B):=\{(S,\omega)\in C^1(G,\Aut(B))\times C^2(G,B^{\times}):\delta_S=C_B\circ\omega, d_S\omega=1\}
\]are called \emph{factor systems} for the pair $(G,B)$. By Lemma \ref{action on factor system}, the set 
$Z^2(G,B)$ is invariant under the action of $C^1(G,B^{\times})$ and we write
\[H^2(G,B):=Z^2(G,B)/C^1(G,B^{\times})
\]for the corresponding cohomology space.
\end{definition}

\section*{Classification of crossed product algebras}
Given a group $G$ and a unital algebra $B$, the main goal of this section is to present a complete classification of $(G,B)$-crossed product algebras. We start with the precise definition:

\begin{definition}\label{algebraic trivial NCP T^n-bundles}(Crossed product algebras).
A $G$-graded unital algebra 
\[A=\bigoplus_{g\in G}A_g
\]with $B:=A_{1_G}$ is called an \emph{$(G,B)$-crossed product algebra}, if each grading space $A_g$ contains an invertible element.
\end{definition}

We now provide a construction that associates to each $(G,B)$-crossed product algebra $A$ a class in $H^2(G,B)$:

\begin{construction}\label{TNCT^nB ass class}(Characteristic classes).
Let $A$ be a $(G,B)$-crossed product algebra. The set
\[A^{\times}_h:=\bigcup_{g\in G}A^{\times}_g
\]of homogeneous units is a subgroup of $A^{\times}$ containing $B^{\times}$. We thus obtain an extension
\begin{align}
1\longrightarrow B^{\times}\longrightarrow A^{\times}_h\stackrel{q}\longrightarrow G\longrightarrow 1\label{formula 6.2}
\end{align}
of groups, where $q(a_g):=g$. In particular, $A^{\times}_h$ is equivalent to a crossed product of the form $B^{\times}\times_{(S,\omega)}G$ for a factor system $(S,\omega)\in Z^2(G,B)$. 
In fact, if we choose a section $\sigma:G\rightarrow A^{\times}_h$ of the extension (\ref{formula 6.2}) which is normalized in the sense that $\sigma(1_G)=1_A$, then we may endow the product set 
$B^{\times}\times G$ with the multiplication
\begin{align}
(b,g)(b',g')=(bS(g)(b')\omega(g,g'),gg'),\notag
\end{align}
where
\[S:G\rightarrow \Aut(B),\,\,\,S(g).b:=\sigma(g)b\sigma(g)^{-1}
\]and
\[\omega:G\times G\rightarrow B^{\times},\,\,\,(g,g')\mapsto \sigma(g)\sigma(g')\sigma(gg')^{-1}.
\]A short observations shows that this multiplication turns $B^{\times}\times G$ into a group and we write $B^{\times}\times_{(S,\omega)} G$ 
for the set $B^{\times}\times G$ endowed with the group multiplication induced by the factor system $(S,\omega)$. In particular, the map
\[B^{\times}\times_{(S,\omega)} G\rightarrow A^{\times}_h,\,\,\,(b,g)\mapsto b\sigma(g)
\]becomes an isomorphism of groups. In this way each $(G,B)$-crossed product algebra $A$ induces a \emph{characteristic class}
\[\chi(A):=[(S,\omega)]\in H^2(G,B).
\]
\end{construction}

\begin{lemma}%\emph{(}Characteristic Classes\emph{)}.
Each $(G,B)$-crossed product algebra $A$ possesses a characteristic class $\chi(A)\in H^2(G,B)$.
\end{lemma}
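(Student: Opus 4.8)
The plan is to observe that this lemma is an immediate consequence of Construction \ref{TNCT^nB ass class}, so the proof amounts to pointing to the construction and verifying that the object it produces is indeed well-defined, i.e., independent of the choices made along the way.

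First I would recall the setup: given a $(G,B)$-crossed product algebra $A$, Construction \ref{TNCT^nB ass class} shows that the subgroup $A_h^\times=\bigcup_{g\in G}A_g^\times$ of homogeneous units sits in a group extension (\ref{formula 6.2}) of $G$ by $B^\times$, and that choosing a normalized section $\sigma:G\to A_h^\times$ yields a factor system $(S,\omega)\in Z^2(G,B)$ together with an isomorphism $B^\times\times_{(S,\omega)}G\xrightarrow{\sim}A_h^\times$. This produces the class $\chi(A)=[(S,\omega)]\in H^2(G,B)$. The only thing to check is that $\chi(A)$ does not depend on the choice of section $\sigma$.

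So the key step is the independence argument. Suppose $\sigma$ and $\sigma'$ are two normalized sections of (\ref{formula 6.2}), giving factor systems $(S,\omega)$ and $(S',\omega')$ respectively. Since both project to $\id_G$ under $q$, for each $g\in G$ the element $h(g):=\sigma'(g)\sigma(g)^{-1}$ lies in $\ker q=B^\times$, and normalization gives $h(1_G)=1_A$, so $h\in C^1(G,B^\times)$. A direct computation then shows $S'(g).b=\sigma'(g)b\sigma'(g)^{-1}=h(g)\sigma(g)b\sigma(g)^{-1}h(g)^{-1}=(C_B\circ h)(g)\cdot S(g).b$ and, similarly, $\omega'(g,g')=\sigma'(g)\sigma'(g')\sigma'(gg')^{-1}=h(g)\,S(g)(h(g'))\,\omega(g,g')\,h(gg')^{-1}=(h\ast_S\omega)(g,g')$. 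Hence $(S',\omega')=h.(S,\omega)$ in the notation of Lemma \ref{action on factor system}, so the two factor systems lie in the same $C^1(G,B^\times)$-orbit and therefore define the same class in $H^2(G,B)$. This is the main (indeed only) obstacle, and it is not a serious one — it is the routine verification that the cohomology machinery of Definition \ref{factor system} and Lemma \ref{action on factor system} has been set up precisely so that the characteristic class is canonical.

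I would conclude by noting that the existence of at least one normalized section is automatic (extensions of groups always admit set-theoretic sections, which can be normalized by redefining the value at $1_G$ to be $1_A$), so $\chi(A)$ is genuinely defined for every $(G,B)$-crossed product algebra. Thus every such $A$ possesses a well-defined characteristic class $\chi(A)\in H^2(G,B)$, which is exactly the assertion of the lemma.
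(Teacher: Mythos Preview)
Your proposal is correct and follows the same approach as the paper, which proves the lemma in a single sentence by pointing to Construction \ref{TNCT^nB ass class}. You have additionally spelled out the routine verification that the class $[(S,\omega)]$ is independent of the choice of normalized section, which the paper leaves implicit; this extra detail is correct and does no harm.
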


\begin{proof}
\,\,\,Indeed, this statement immediately follows from Construction \ref{TNCT^nB ass class}.
\end{proof}

\begin{definition}\label{Equivlance}
Two $(G,B)$-crossed product algebras $A$ and $A'$ are called \emph{equivalent} if there is an algebra isomorphism
$\varphi:A\rightarrow A'$ satisfying $\varphi(A_g)=A'_g$ for all $g\in G$ and $\varphi_{\mid B}=\id_B$. If $A$ and $A'$ are equivalent $(G,B)$-crossed product algebras, then we write $[A]$ for the corresponding equivalence class.
\end{definition}

\begin{proposition}\label{equ  ATNCT^nB same class}
Let $A$ and $A'$ be two equivalent $(G,B)$-crossed product algebras. Then their corresponding characteristic classes coincide, i.e.,
\[\chi(A)=\chi(A')\in H^2(G,B).
\]
\end{proposition}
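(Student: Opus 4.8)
The plan is to show that an equivalence $\varphi : A \to A'$ of $(G,B)$-crossed product algebras transports a normalized section of the extension (\ref{formula 6.2}) for $A$ to a normalized section for $A'$, yielding literally the \emph{same} factor system $(S,\omega)$, not merely a cohomologous one. First I would observe that since $\varphi$ is an algebra isomorphism with $\varphi(A_g) = A'_g$ for all $g \in G$, it restricts to a group isomorphism $A^\times_h \to (A')^\times_h$ of homogeneous units which is compatible with the grading projections $q$ and $q'$, i.e.\ $q' \circ \varphi|_{A^\times_h} = q$, and which is the identity on $B^\times$ because $\varphi|_B = \id_B$.

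Next I would fix a normalized section $\sigma : G \to A^\times_h$ of (\ref{formula 6.2}) for $A$, so that $\sigma(1_G) = 1_A$, and set $\sigma' := \varphi \circ \sigma : G \to (A')^\times_h$. From $q' \circ \varphi|_{A^\times_h} = q$ and $q \circ \sigma = \id_G$ we get $q' \circ \sigma' = \id_G$, and from $\varphi(1_A) = 1_{A'}$ we get $\sigma'(1_G) = 1_{A'}$; hence $\sigma'$ is a normalized section of the corresponding extension for $A'$. Now I would compute the factor system $(S', \omega')$ that $\sigma'$ induces via Construction \ref{TNCT^nB ass class}: for $b \in B$,
\[
S'(g).b \;=\; \sigma'(g)\, b\, \sigma'(g)^{-1} \;=\; \varphi(\sigma(g))\, \varphi(b)\, \varphi(\sigma(g))^{-1} \;=\; \varphi\bigl(\sigma(g)\, b\, \sigma(g)^{-1}\bigr) \;=\; \varphi\bigl(S(g).b\bigr) \;=\; S(g).b,
\]
using $\varphi|_B = \id_B$ twice (once to write $\varphi(b) = b$ on the input side and once because $S(g).b \in B$ so $\varphi$ fixes it on the output side). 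Similarly,
\[
\omega'(g,g') \;=\; \sigma'(g)\,\sigma'(g')\,\sigma'(gg')^{-1} \;=\; \varphi\bigl(\sigma(g)\,\sigma(g')\,\sigma(gg')^{-1}\bigr) \;=\; \varphi\bigl(\omega(g,g')\bigr) \;=\; \omega(g,g'),
\]
since $\omega(g,g') \in B^\times$ and $\varphi$ fixes $B$ pointwise. Therefore $(S',\omega') = (S,\omega)$ as factor systems, and a fortiori $\chi(A') = [(S',\omega')] = [(S,\omega)] = \chi(A)$ in $H^2(G,B)$.

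The only genuine subtlety, which I would state carefully rather than belabor, is that the characteristic class $\chi(A)$ is a priori defined only up to the choice of normalized section, so one must make sure the argument does not accidentally compare $\sigma'$ against an \emph{unrelated} section of $A'$; the point is precisely that $\varphi \circ \sigma$ is an admissible choice of section for $A'$, so the two factor systems coincide on the nose for this matched pair of sections, and then well-definedness of $\chi$ (Construction \ref{TNCT^nB ass class}, via Lemma \ref{action on factor system}) takes care of the rest. Everything else is a direct unwinding of the definitions, so I do not anticipate a real obstacle here.
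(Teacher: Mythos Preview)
Your proof is correct and follows essentially the same idea as the paper: both recognize that the algebra equivalence $\varphi$ induces an equivalence of the associated group extensions~(\ref{formula 6.2}), which forces the characteristic classes to agree. The paper simply cites \cite[Chapter IV, Section 4]{Ma95} for this last step, whereas you carry it out explicitly by transporting the section via $\varphi$ and observing that, because $\varphi|_B=\id_B$, the resulting factor system $(S',\omega')$ coincides \emph{literally} with $(S,\omega)$; this makes your argument more self-contained but not genuinely different in strategy.
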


\begin{proof}
\,\,\,If $A$ and $A'$ are equivalent, then the same holds for their corresponding extensions of Construction \ref{TNCT^nB ass class}. Thus, the claim follows from \cite[Chapter IV, Section 4]{Ma95}.
\end{proof}

\begin{definition}(Set of equivalence classes).\label{ext shit}
Let $G$ be a group and $B$ be a unital algebra. We write $\Ext(G,B)$ for the set of all equivalence classes of $(G,B)$-crossed product algebras.
\end{definition}

%\begin{construction}
%Conversely, each factor system $(S,\omega)\in Z^2(G,B)$
%defines a multiplication on the vector space
%$$A:=\bigoplus_{\alpha\inG}Ba_{\alpha}$$
%with basis $(a_{\alpha})_{\alpha\inG}$ by 
%$$ba_{\alpha}b'a_{\beta}:=b(S(\alpha)(b'))\omega(\alpha,\beta)a_{\alpha+\beta}\,\,\,\text{for}\,\,\,\alpha,\beta\inG.$$
%$\bullet$ Write $A_{(S,\omega)}$ for the vector space $A$ endowed with the above multiplicaion. 
%$\bullet$ This construction provides an algebraically trivial NCP$\mathbb{T}^n$-bundle with base $B$.
%\end{construction}

\begin{lemma}\label{chi well-defined}
The map
\[\chi:\Ext(G,B)\rightarrow H^2(G,B),\,\,\,[A]\mapsto\chi(A)
\]is well-defined.
\end{lemma}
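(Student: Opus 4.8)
\textbf{Plan for the proof of Lemma \ref{chi well-defined}.}
The statement is that the assignment $[A]\mapsto\chi(A)$ descends to a well-defined map on the set $\Ext(G,B)$ of equivalence classes of $(G,B)$-crossed product algebras. Recall that $\chi(A)$ was already constructed in Construction \ref{TNCT^nB ass class} for a \emph{representative} $A$, so the only thing left to verify is that the class $\chi(A)\in H^2(G,B)$ does not depend on the chosen representative of $[A]$. But this is precisely the content of Proposition \ref{equ ATNCT^nB same class}, which asserts that equivalent $(G,B)$-crossed product algebras $A$ and $A'$ have $\chi(A)=\chi(A')$.

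Concretely, the plan is as follows. First I would recall that, by Construction \ref{TNCT^nB ass class}, every $(G,B)$-crossed product algebra $A$ carries a characteristic class $\chi(A)=[(S,\omega)]\in H^2(G,B)$, obtained by choosing a normalized section $\sigma\colon G\to A^{\times}_h$ of the extension (\ref{formula 6.2}); one should also note in passing that this class is already independent of the choice of section $\sigma$, since any two normalized sections differ by an element of $C^1(G,B^{\times})$ and the action of $C^1(G,B^{\times})$ on $Z^2(G,B)$ is divided out in passing to $H^2(G,B)$ (Lemma \ref{action on factor system}). Then, given $[A]=[A']$, i.e.\ an algebra isomorphism $\varphi\colon A\to A'$ with $\varphi(A_g)=A'_g$ for all $g\in G$ and $\varphi|_B=\id_B$, I would invoke Proposition \ref{equ ATNCT^nB same class} to conclude $\chi(A)=\chi(A')$. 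Hence $\chi$ is constant on equivalence classes, which is exactly the claim.

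Since this is a short bookkeeping step resting entirely on results already established, there is essentially no obstacle: the real work was done in Construction \ref{TNCT^nB ass class} and Proposition \ref{equ ATNCT^nB same class}. The only point that deserves a word of care — and the closest thing to a ``main obstacle'' — is making sure that the two independences are kept separate and both are covered: independence of the section $\sigma$ within a fixed algebra $A$ (handled by the $C^1(G,B^{\times})$-action in Lemma \ref{action on factor system}), and independence of the representative $A$ within the equivalence class $[A]$ (handled by Proposition \ref{equ ATNCT^nB same class}); together these give well-definedness of $\chi$ on $\Ext(G,B)$.
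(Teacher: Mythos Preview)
Your proposal is correct and takes essentially the same approach as the paper, which simply invokes Proposition \ref{equ  ATNCT^nB same class} to conclude. Your additional remark about independence of the section $\sigma$ (via the $C^1(G,B^{\times})$-action of Lemma \ref{action on factor system}) is a helpful clarification that the paper leaves implicit in Construction \ref{TNCT^nB ass class}.
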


\begin{proof}
\,\,\,The statement immediately follows from Proposition \ref{equ  ATNCT^nB same class}.
\end{proof}

\vspace*{0,3cm}

In the remaining part we show that the map $\chi$ is a bijection:

\begin{construction}\label{realization of TNCT^B from factor systems I}
Let $G$ be a group and $B$ be a unital algebra. Further, let 
\[A:=\bigoplus_{g\in G}Bv_{g}
\]be a vector space with basis $(v_g)_{g\in G}$. For a factor system $(S,\omega)\in Z^2(G,B)$, we
define a multiplication map 
\[m_{(S,\omega)}:A\times A\rightarrow A
\]given on homogeneous elements by
\begin{align}
m_{(S,\omega)}(bv_g,b'v_{g'}):=b(S(g)(b'))\omega(g,g')v_{gg'},\label{multiplication}
\end{align}
and write $A_{(S,\omega)}$ for the vector space $A$ endowed with the multiplication (\ref{multiplication}). A short calculation shows that $A_{(S,\omega)}$ is a $G$-graded unital algebra with $A_{1_G}=B$ and unit $v_{1_G}$. Moreover, each grading space $A_g$ contains invertible elements w.r.t. to this multiplication: Indeed, if $b\in B^{\times}$, then the inverse of $bv_g$ is given by
\[S(g)^{-1}(b^{-1}\omega(g,g^{-1})^{-1})v_{g^{-1}}.
\]Thus, $A_{(S,\omega)}$ is a $(G,B)$-crossed product algebra which has characteristic class \mbox{$\chi(A_{(S,\omega)})=[(S,\omega)]$}.
\end{construction}

\begin{proposition}\label{realization of TNCT^B from factor systems II}
If $G$ is a group and $B$ a unital algebra, then each element $[(S,\omega)]\in H^2(G,B)$ can be realized by a $(G,B)$-crossed product algebra with $\chi(A)=[(S,\omega)]$.
\end{proposition}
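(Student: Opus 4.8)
The plan is to use Construction~\ref{realization of TNCT^B from factor systems I} as the explicit realization and then verify that it has the required characteristic class. More precisely, given a class $[(S,\omega)] \in H^2(G,B)$, I would first choose a representative factor system $(S,\omega) \in Z^2(G,B)$ of that class, i.e.\ a pair satisfying $\delta_S = C_B \circ \omega$ and $d_S\omega = 1$. Then I would apply Construction~\ref{realization of TNCT^B from factor systems I} to this representative to obtain the algebra $A := A_{(S,\omega)}$, which by the construction is a $(G,B)$-crossed product algebra.

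The remaining step is to check that $\chi(A_{(S,\omega)}) = [(S,\omega)]$. This amounts to running Construction~\ref{TNCT^nB ass class} on $A_{(S,\omega)}$ with a convenient choice of normalized section. The natural choice is $\sigma: G \to A^\times_h$, $\sigma(g) := v_g$; this is normalized since $v_{1_G}$ is the unit of $A_{(S,\omega)}$, and each $v_g$ is invertible by the inverse formula recorded in Construction~\ref{realization of TNCT^B from factor systems I}. With this section, the associated outer action is $S'(g).b = v_g b v_g^{-1}$ and the associated $2$-cochain is $\omega'(g,g') = v_g v_{g'} v_{gg'}^{-1}$; a direct computation from the multiplication rule (\ref{multiplication}) shows $S' = S$ and $\omega' = \omega$, so $\chi(A_{(S,\omega)}) = [(S,\omega)]$ as desired. (In fact this last sentence is already asserted in Construction~\ref{realization of TNCT^B from factor systems I}, so one may simply cite it.)

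I do not expect any serious obstacle here: the entire content is packaged in Construction~\ref{realization of TNCT^B from factor systems I}, and the proposition is essentially a restatement of its conclusion phrased from the point of view of the cohomology space rather than of a fixed cocycle. The only mild subtlety worth a sentence is well-definedness with respect to the choice of representative: if $(S,\omega)$ and $(S'',\omega'')$ represent the same class, then $A_{(S,\omega)}$ and $A_{(S'',\omega'')}$ need not be literally equal, but the statement only requires the \emph{existence} of some realizing crossed product algebra, so this is not an issue for the proof as stated. Hence the proof will be short: pick a representative, invoke the construction, and record (or cite) the computation of the characteristic class.

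\begin{proof}
\,\,\,Let $[(S,\omega)]\in H^2(G,B)$ and choose a representative factor system $(S,\omega)\in Z^2(G,B)$. By Construction~\ref{realization of TNCT^B from factor systems I}, the algebra $A:=A_{(S,\omega)}$ is a $(G,B)$-crossed product algebra with characteristic class $\chi(A_{(S,\omega)})=[(S,\omega)]$, which proves the claim.
\end{proof}
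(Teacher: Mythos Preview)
Your proof is correct and follows exactly the same approach as the paper: pick a representative factor system and invoke Construction~\ref{realization of TNCT^B from factor systems I}, which already records that $A_{(S,\omega)}$ is a $(G,B)$-crossed product algebra with $\chi(A_{(S,\omega)})=[(S,\omega)]$. The paper's proof is essentially your final one-sentence version verbatim.
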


\begin{proof}
\,\,\,This statement is a consequence of Construction \ref{realization of TNCT^B from factor systems I}: In fact, if $[(S,\omega)]$ represents a class in $H^2(G,B)$, then $A_{(S,\omega)}$ satisfies the requirements of the proposition.
\end{proof}

\begin{proposition}\label{realization of TNCT^B from factor systems III}
Let $A$ be a $(G,B)$-crossed product algebra. Then $A$ is equivalent to a $(G,B)$-crossed product algebra of the form $A_{(S,\omega)}$ for some factor system $(S,\omega)\in Z^2(G,B)$.
\end{proposition}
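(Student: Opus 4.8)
The plan is to construct an explicit equivalence between the given $(G,B)$-crossed product algebra $A$ and an algebra of the form $A_{(S,\omega)}$ built from a suitable factor system. The starting point is Construction \ref{TNCT^nB ass class}, which already does most of the work: choosing a normalized section $\sigma:G\rightarrow A^{\times}_h$ of the extension (\ref{formula 6.2}), one obtains maps $S:G\rightarrow\Aut(B)$, $S(g).b:=\sigma(g)b\sigma(g)^{-1}$, and $\omega:G\times G\rightarrow B^{\times}$, $\omega(g,g'):=\sigma(g)\sigma(g')\sigma(gg')^{-1}$, which together form a factor system $(S,\omega)\in Z^2(G,B)$ (the cocycle-type identities $\delta_S=C_B\circ\omega$ and $d_S\omega=1$ follow from associativity of multiplication in $A$ and from $\sigma$ being normalized; this is exactly what Construction \ref{TNCT^nB ass class} records). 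So the factor system to use is the one attached to $\sigma$.

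First I would define the candidate isomorphism $\varphi:A_{(S,\omega)}\rightarrow A$ on the distinguished basis $(v_g)_{g\in G}$ of $A_{(S,\omega)}$ by $\varphi(bv_g):=b\,\sigma(g)$ for $b\in B$, extended $B$-linearly; since $A=\bigoplus_{g\in G}A_g$ and each $A_g=B\sigma(g)$ (because $\sigma(g)\in A^{\times}_g$ and left multiplication by the invertible $\sigma(g)$ carries $B=A_{1_G}$ bijectively onto $A_g$), this $\varphi$ is a well-defined linear bijection. Next I would check that $\varphi$ is an algebra homomorphism: on homogeneous elements,
\[
\varphi\big(m_{(S,\omega)}(bv_g,b'v_{g'})\big)=\varphi\big(b\,S(g)(b')\,\omega(g,g')\,v_{gg'}\big)=b\,S(g)(b')\,\omega(g,g')\,\sigma(gg'),
\]
and using $S(g)(b')=\sigma(g)b'\sigma(g)^{-1}$ and $\omega(g,g')=\sigma(g)\sigma(g')\sigma(gg')^{-1}$ this collapses to $b\,\sigma(g)\,b'\,\sigma(g')=\varphi(bv_g)\varphi(b'v_{g'})$, as required. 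It is immediate from $\sigma(1_G)=1_A$ that $\varphi(v_{1_G})=1_A$, that $\varphi(A_{(S,\omega)})_g=A_g$ for every $g$, and that $\varphi$ restricts to the identity on $B=A_{1_G}$; hence $\varphi$ is an equivalence of $(G,B)$-crossed product algebras in the sense of Definition \ref{Equivlance}.

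I do not expect a serious obstacle here: the statement is essentially a bookkeeping consequence of Construction \ref{TNCT^nB ass class}, and the only points requiring a word of care are (i) that every grading space $A_g$ is a free left $B$-module on the single generator $\sigma(g)$ — which needs $\sigma(g)$ to be an invertible homogeneous element, guaranteed by the crossed-product hypothesis — and (ii) that the multiplication formula (\ref{multiplication}) matches the multiplication in $A$ transported through $\varphi$, which is the short computation above. A minor subtlety is that one should fix a normalized section so that $v_{1_G}\mapsto 1_A$ and so that $\varphi|_B=\id_B$; any section can be renormalized at $1_G$ without affecting anything. Thus the proof amounts to: choose a normalized section $\sigma$; read off $(S,\omega)\in Z^2(G,B)$ as in Construction \ref{TNCT^nB ass class}; define $\varphi(bv_g):=b\sigma(g)$; verify it is a grading-preserving, $B$-fixing algebra isomorphism.
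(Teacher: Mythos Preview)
Your proof is correct and follows essentially the same approach as the paper: both pick a normalized section $\sigma:G\to A^{\times}_h$, read off the factor system $(S,\omega)=(C_B\circ\sigma,\delta_\sigma)$, and exhibit the equivalence by sending $bv_g\leftrightarrow b\sigma(g)$ (the paper writes the inverse map $a_g\mapsto a_g\sigma(g)^{-1}v_g$, which is the same isomorphism in the other direction). Your write-up is in fact more detailed than the paper's, explicitly verifying multiplicativity and the fact that $A_g=B\sigma(g)$.
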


\begin{proof}
\,\,\,Let $A$ be a $(G,B)$-crossed product algebra. We consider the corresponding short exact sequence of groups
\[1\longrightarrow A^{\times}_{1_G}\longrightarrow A^{\times}_h\longrightarrow G\longrightarrow 1
\]and choose a section $\sigma:G\rightarrow A^{\times}_h$. Now, a short calculation shows that the map
\[\varphi:(A=\bigoplus_{g\in G}A_{g},m_A)\rightarrow\big(\bigoplus_{g\in G}Bv_{g},m_{(C_B\circ\sigma,\delta_{\sigma})}\big),
\]given on homogeneous elements by
\[\varphi(a_g):=a_{g}\sigma(g)^{-1}v_{g},
\]defines an equivalence of $(G,B)$-crossed product algebras. 
%is a $\mathbb{T}^n$-equivariant algebra isomorphism. Hence, Corollary \ref{ext of dense cont G-mod iso} implies that we can extend $\varphi$ to a $\mathbb{T}^n$-equivariant algebra isomorphism
%\[\overline{\varphi}:A\rightarrow(C^{\infty}(\mathbb{T}^n,B),\overline{m}_{(C_B\circ\sigma,\delta_{\sigma})}),
%\]which proves the claim of the proposition.
\end{proof}

\begin{proposition}\label{realization of TNCT^B from factor systems IV}
Let $G$ be a group and $B$ be a unital algebra. Further, let $(S,\omega)$ and $(S',\omega')$ be two factor systems in $ Z^2(G,B)$ with $[(S,\omega)]=[(S',\omega')]$. Then the corresponding $(G,B)$-crossed product algebras $A_{(S,\omega)}$ and $A_{(S',\omega')}$ are equivalent.
\end{proposition}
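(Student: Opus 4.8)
The plan is to unwind the definition of the equivalence relation $[(S,\omega)] = [(S',\omega')]$ in $H^2(G,B)$ and produce an explicit isomorphism between $A_{(S,\omega)}$ and $A_{(S',\omega')}$ built from the connecting cochain. By Definition \ref{factor system}, the hypothesis means there is an $h \in C^1(G,B^{\times})$ with $h.(S,\omega) = (S',\omega')$, i.e. $S' = (C_B \circ h)\cdot S$ and $\omega' = h\ast_S\omega$, where $(h\ast_S\omega)(g,g') = h(g)S(g)(h(g'))\omega(g,g')h(gg')^{-1}$. So the whole proof rests on writing down the map that this $h$ induces on the underlying graded vector spaces.

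First I would define, on homogeneous elements, the linear map
\[
\varphi : A_{(S,\omega)} \longrightarrow A_{(S',\omega')}, \qquad \varphi(bv_g) := b\,h(g)^{-1} v_g,
\]
where I write $v_g$ for the basis elements of the former and (abusively) also for those of the latter. Since $h(g) \in B^{\times}$, this is clearly a bijective linear map, it preserves the $G$-grading by construction, and on the identity component $g = 1_G$ it is the identity (using that $h$ is normalized, $h(1_G) = 1_B$, so $\varphi(bv_{1_G}) = bv_{1_G}$). Thus $\varphi$ has all the formal properties required of an equivalence of $(G,B)$-crossed product algebras in Definition \ref{Equivlance}; the only thing left is multiplicativity.

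The core computation is to check $\varphi(bv_g \cdot b'v_{g'}) = \varphi(bv_g)\cdot\varphi(b'v_{g'})$, where the left product is taken in $A_{(S,\omega)}$ and the right in $A_{(S',\omega')}$. Expanding the left side via \eqref{multiplication} gives $\varphi\big(b\,S(g)(b')\,\omega(g,g')\,v_{gg'}\big) = b\,S(g)(b')\,\omega(g,g')\,h(gg')^{-1} v_{gg'}$. Expanding the right side via the $(S',\omega')$-multiplication gives $b\,h(g)^{-1}\,S'(g)\big(b'h(g')^{-1}\big)\,\omega'(g,g')\,v_{gg'}$. Now I substitute $S'(g) = C_B(h(g))\circ S(g)$, so $S'(g)(b'h(g')^{-1}) = h(g)\,S(g)(b')\,S(g)(h(g'))^{-1}\,h(g)^{-1}$, and substitute $\omega'(g,g') = h(g)\,S(g)(h(g'))\,\omega(g,g')\,h(gg')^{-1}$. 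After cancelling the $h(g)^{-1}h(g)$ and $S(g)(h(g'))^{-1}S(g)(h(g'))$ pairs, both sides reduce to $b\,S(g)(b')\,\omega(g,g')\,h(gg')^{-1} v_{gg'}$, establishing the identity. This is a routine but slightly fiddly cancellation; it is the only real content, and the main obstacle is simply keeping the order of the noncommutative factors straight — since $B$ need not be commutative, one must resist the temptation to reorder terms. I expect no genuine difficulty beyond bookkeeping, and one could alternatively cite Lemma \ref{action on factor system} together with the construction in the proof of Proposition \ref{realization of TNCT^B from factor systems III} to shorten this, but the direct verification above is cleanest.
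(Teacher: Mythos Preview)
Your proof is correct and follows essentially the same approach as the paper: both construct the equivalence explicitly from the cochain $h$ witnessing $h.(S,\omega)=(S',\omega')$ and verify multiplicativity by direct computation. The only cosmetic difference is the direction of the map --- the paper defines $\varphi:A_{(S',\omega')}\to A_{(S,\omega)}$ by $\varphi(bv_g)=bh(g)v_g$, which is precisely the inverse of your $\varphi$.
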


\begin{proof}
\,\,\,First recall that the condition $[(S',\omega')]=[(S,\omega)]$ is equivalent to the existence of an element $h\in C^1(G,B^{\times})$ with 
\[h.(S,\omega)=(S',\omega'),
\]Now, a short observation shows that the map 
\[\varphi:\big(\bigoplus_{g\in G}Bv_{g},m_{(S',\omega')}\big)\rightarrow\big(\bigoplus_{g\in G}Bv_{g},m_{(S,\omega)}\big),
\]given on homogeneous elements by
\[\varphi(bv_{g})=bh(g)v_{g},
\]is an automorphism of vector spaces leaving the grading spaces invariant. We further have
\begin{align}
m_{(S,\omega)}(\varphi(bv_{g}),\varphi(b'v_{g'}))&=m_{(S,\omega)}(bh(g)v_{g},b'h(g')v_{g'})\notag\\
&=bh({g})S({g})(b'h(g'))\omega({g},g')v_{gg'}\notag\\
&=b[C_B(h({g}))(S({g})(b'))]h({g})S({g})(h(g'))\omega({g},g')v_{gg'}\notag\\
&=b(h.S)({g})(b')(h\ast_S\omega)({g},g')h({gg'})v_{gg'}\notag\\
&=\varphi(b(h.S)({g})(b')(h\ast_S\omega)({g},{g'})v_{gg'})\notag\\
&=\varphi(m_{(S',\omega')}(bv_{g},b'v_{g'})).\notag
\end{align}
Hence, the map $\varphi$ actually defines an equivalence of $(G,B)$-crossed product algebras. 
%Now, Corollary \ref{ext of dense cont G-mod iso} implies that we can extend $\varphi$ to a $\mathbb{T}^n$-equivariant algebra isomorphism
%\[\overline{\varphi}:((C^{\infty}(\mathbb{T}^n,B),\overline{m}_{(S',\omega')})\rightarrow(C^{\infty}(\mathbb{T}^n,B),\overline{m}_{(S,\omega)}),
%\]which proves the claim of the proposition.
\end{proof}

\vspace*{0,3cm}

We are now ready to state and proof the main theorem of this section:

\begin{theorem}\label{classification on TNCT^B}
Let $G$ be a group and $B$ be a unital algebra. Then the map
\[\chi:\Ext(G,B)\rightarrow H^2(G,B),\,\,\,[A]\mapsto\chi(A)
\]is a well-defined bijection.
\end{theorem}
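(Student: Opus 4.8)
\textbf{Proof proposal for Theorem~\ref{classification on TNCT^B}.}

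The plan is to assemble the bijection from the auxiliary results established in this section, since each of the three defining properties of a bijective map has already been isolated. First I would recall that well-definedness of $\chi$ is exactly the content of Lemma~\ref{chi well-defined} (which in turn rests on Proposition~\ref{equ ATNCT^nB same class}), so nothing further is needed for that point. Then surjectivity follows directly from Proposition~\ref{realization of TNCT^B from factor systems II}: given a class $[(S,\omega)]\in H^2(G,B)$, the crossed product algebra $A_{(S,\omega)}$ of Construction~\ref{realization of TNCT^B from factor systems I} is a $(G,B)$-crossed product algebra with $\chi(A_{(S,\omega)})=[(S,\omega)]$, so $[A_{(S,\omega)}]$ is a $\chi$-preimage of $[(S,\omega)]$.

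For injectivity, suppose $A$ and $A'$ are $(G,B)$-crossed product algebras with $\chi(A)=\chi(A')$. By Proposition~\ref{realization of TNCT^B from factor systems III} we may replace $A$ and $A'$ (up to equivalence, hence without changing their classes in $\Ext(G,B)$) by algebras of the standard form $A_{(S,\omega)}$ and $A_{(S',\omega')}$ for factor systems $(S,\omega),(S',\omega')\in Z^2(G,B)$. Since passing to these standard forms is via an equivalence of crossed product algebras, Proposition~\ref{equ ATNCT^nB same class} gives $[(S,\omega)]=\chi(A_{(S,\omega)})=\chi(A)=\chi(A')=\chi(A_{(S',\omega')})=[(S',\omega')]$ in $H^2(G,B)$. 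Now Proposition~\ref{realization of TNCT^B from factor systems IV} applies and shows that $A_{(S,\omega)}$ and $A_{(S',\omega')}$ are equivalent, whence $[A]=[A_{(S,\omega)}]=[A_{(S',\omega')}]=[A']$ in $\Ext(G,B)$. This proves injectivity and completes the proof.

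The only genuine subtlety — and the step I expect to be the main obstacle — is the bookkeeping in the injectivity argument: one must be careful that the characteristic class $\chi$ is genuinely invariant under the equivalence furnished by Proposition~\ref{realization of TNCT^B from factor systems III}, so that the hypothesis $\chi(A)=\chi(A')$ transfers faithfully to an equality of cohomology classes of factor systems. This is precisely where Proposition~\ref{equ ATNCT^nB same class} is indispensable; with it in hand, the remaining chain of identifications is formal. Everything else is an immediate citation of the propositions already proved above, so no new computation is required.
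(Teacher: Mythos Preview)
Your proof is correct and follows essentially the same route as the paper's own argument: well-definedness via Lemma~\ref{chi well-defined}, surjectivity via Proposition~\ref{realization of TNCT^B from factor systems II}, and injectivity by reducing to standard forms $A_{(S,\omega)}$, $A_{(S',\omega')}$ through Proposition~\ref{realization of TNCT^B from factor systems III} and then applying Proposition~\ref{realization of TNCT^B from factor systems IV}. The only difference is that you make the use of Proposition~\ref{equ  ATNCT^nB same class} explicit in the injectivity step (to transfer $\chi(A)=\chi(A')$ to $[(S,\omega)]=[(S',\omega')]$), whereas the paper folds this into a single sentence.
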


\begin{proof}
\,\,\,We first note that Lemma \ref{chi well-defined} implies that the map $\chi$ is well-defined. The surjectivity of $\chi$ follows from Proposition \ref{realization of TNCT^B from factor systems II}. Hence, it remains to show that the map $\chi$ is injective: For this let $A$ and $A'$ be two $(G,B)$-crossed product algebras with \mbox{$\chi(A)=\chi(A')$}. By Proposition \ref{realization of TNCT^B from factor systems III} we may assume that $A=A_{(S,\omega)}$ and $A'=A_{(S',\omega')}$ for two factor systems $(S,\omega)$ and $(S',\omega')$ in $ Z^2(G,B)$ with $[(S',\omega')]=[(S,\omega)]$. Thus, the claim follows from Proposition \ref{realization of TNCT^B from factor systems IV}.
\end{proof}

\section*{$G$-Kernels} 

We just saw that the set of all equivalence classes of $(G,B)$-crossed product algebras is classified by the cohomology space $H^2(G,B)$. Moreover, Proposition \ref{equ  ATNCT^nB same class} in particular implies that equivalent $(G,B)$-crossed product algebras correspond to the same $G$-kernel (cf. Definition \ref{cochains} (c)). This leads to the following definition:

\begin{definition}(Equivalence classes of $G$-kernels).
Let $G$ be a group and $B$ be a unital algebra. We write $\Ext(G,B)_{[S]}$ for the set of equivalence classes of $(G,B)$-crossed product algebras corresponding to the $G$-kernel $[S]$. 
\end{definition}

Note that the set $\Ext(G,B)_{[S]}$ may be empty. The aim of this section is to show how to classify this set and give conditions for its non-emptiness. The following proposition basically states that if $\Ext(G,B)_{[S]}$ is non-empty, then it is classified by the second group cohomolgy space $H^2(G,Z(B)^{\times})_{[S]}$ (cf. \cite[Chapter IV, Section 4]{Ma95} for the necessary definitions on classical group cohomology.)

\begin{proposition}\label{class of Z-kernels I}
Let $G$ be a group and $B$ be a unital algebra. Further, let $S$ be an outer action of $G$ on $B$ with $\Ext(G,B)_{[S]}\neq\emptyset$. Then the following assertions hold:
\begin{itemize}
\item[\emph{(a)}]
Each extension class in $\Ext(G,B)_{[S]}$ can be represented by $(G,B)$-crossed product algebra of the form $A_{(S,\omega)}$.
\item[\emph{(b)}]
Any other $(G,B)$-crossed product algebra $A_{(S,\omega')}$ representing an element of $\Ext(G,B)_{[S]}$ satisfies
\[\omega'\cdot\omega^{-1}\in Z^2(G,Z(B)^{\times})_{[S]}.
\]
\item[\emph{(c)}]
Two $(G,B)$-crossed product algebras $A_{(S,\omega)}$ and $A_{(S,\omega')}$ are equivalent if and only if
\[\omega'\cdot\omega^{-1}\in B^2(G,Z(B)^{\times})_{[S]}.
\]
\end{itemize}
\end{proposition}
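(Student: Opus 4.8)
Throughout I would use two standing facts: the kernel of the conjugation action $C_B\colon B^\times\to\Aut(B)$ is exactly $Z(B)^\times$, and, since $\delta_S=C_B\circ\omega$ is pointwise inner for every factor system $(S,\omega)\in Z^2(G,B)$, the outer action $S$ restricts to a genuine $G$-action $\overline S$ on $Z(B)^\times$ that depends only on the $G$-kernel $[S]$; this $\overline S$ is the $G$-module structure underlying the notation $Z^2(G,Z(B)^\times)_{[S]}$ and $B^2(G,Z(B)^\times)_{[S]}$. Checking that $\overline S$ is well defined and independent of the representative of $[S]$ is the first small thing I would record.

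For (a), let $[A]\in\Ext(G,B)_{[S]}$. By Proposition \ref{realization of TNCT^B from factor systems III} we may write $A\sim A_{(S',\omega')}$ for some $(S',\omega')\in Z^2(G,B)$, and since $A$ has $G$-kernel $[S]$ we get $[S']=[S]$, hence there is $h\in C^1(G,B^\times)$ with $S=(C_B\circ h^{-1})\cdot S'$. Then Lemma \ref{action on factor system}(a) shows that $h^{-1}.(S',\omega')=(S,\omega)$ with $\omega:=h^{-1}\ast_{S'}\omega'$ again lies in $Z^2(G,B)$, and Proposition \ref{realization of TNCT^B from factor systems IV} gives $A_{(S,\omega)}\sim A_{(S',\omega')}\sim A$; hence $[A]$ is represented by $A_{(S,\omega)}$.

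For (b), fix such an $\omega$ and let $(S,\omega')\in Z^2(G,B)$ be arbitrary; put $\eta:=\omega'\cdot\omega^{-1}$. From $\delta_S=C_B\circ\omega=C_B\circ\omega'$ we get $C_B\circ\eta\equiv\id_B$, so $\eta$ is $Z(B)^\times$-valued, and it is normalized because $\omega,\omega'$ are. The decisive calculation is that, writing $\omega'=\eta\cdot\omega$ and using that $\eta$ is central (so that $S(g)(\eta(g',g''))=\overline S(g)(\eta(g',g''))\in Z(B)^\times$ factors out of all products), one obtains
\[
(d_S\omega')(g,g',g'')=\bigl(\overline S(g)(\eta(g',g''))\,\eta(g,g'g'')\,\eta(gg',g'')^{-1}\,\eta(g,g')^{-1}\bigr)\cdot(d_S\omega)(g,g',g'').
\]
Since $d_S\omega=d_S\omega'=1$ by hypothesis, the parenthesized factor is trivial, and this is precisely the multiplicative $2$-cocycle identity for $\eta$ relative to $\overline S$; thus $\eta\in Z^2(G,Z(B)^\times)_{[S]}$.

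For (c), note first that any $\mu\in C^1(G,Z(B)^\times)\subseteq C^1(G,B^\times)$ satisfies $C_B\circ\mu\equiv\id_B$, hence $\mu.S=S$, and a short computation (again exploiting centrality of $\mu$) identifies $\mu\ast_S\omega$ with $(d_{\overline S}\mu)\cdot\omega$, where $d_{\overline S}$ is the ordinary coboundary. Thus if $\omega'\cdot\omega^{-1}=d_{\overline S}\mu\in B^2(G,Z(B)^\times)_{[S]}$ then $\mu.(S,\omega)=(S,\omega')$ and $A_{(S,\omega)}\sim A_{(S,\omega')}$ by Proposition \ref{realization of TNCT^B from factor systems IV}. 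Conversely, if $A_{(S,\omega)}\sim A_{(S,\omega')}$ then $[(S,\omega)]=[(S,\omega')]$ in $H^2(G,B)$ by Proposition \ref{equ  ATNCT^nB same class}, i.e.\ $h.(S,\omega)=(S,\omega')$ for some $h\in C^1(G,B^\times)$; equality of the first components forces $C_B(h(g))=\id_B$ for all $g$, i.e.\ $h\in C^1(G,Z(B)^\times)$, and equality of the second components then reads $\omega'\cdot\omega^{-1}=d_{\overline S}h\in B^2(G,Z(B)^\times)_{[S]}$. The only real obstacle is the bookkeeping in these last two steps: keeping the $\ast_S$-action, the twisted differential $d_S$, and the un-twisted $2$-cocycle/coboundary operators for the $G$-module $\overline S$ on $Z(B)^\times$ mutually consistent — in particular reconciling the order and inverse conventions, which is legitimate here only because every correction cochain that occurs is central.
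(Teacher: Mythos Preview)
Your proof is correct and follows essentially the same route as the paper: part (a) reduces via Proposition~\ref{realization of TNCT^B from factor systems III} to some $A_{(S',\omega')}$ and then acts by $h^{-1}$ to replace $S'$ by $S$; part (b) uses $C_B\circ\omega=\delta_S=C_B\circ\omega'$ to get centrality of $\omega'\omega^{-1}$ and then the factorization $d_S\omega'=d_S(\omega'\omega^{-1})\cdot d_S\omega$; and part (c) unwinds the $C^1(G,B^\times)$-action exactly as the paper does, with the same observation that $h.S=S$ forces $h$ to be $Z(B)^\times$-valued. The only cosmetic differences are that you spell out the restricted module structure $\overline S$ on $Z(B)^\times$ explicitly (the paper leaves this implicit in its reference to Mac Lane), and in (a) you should strictly speaking invoke Lemma~\ref{action on factor system}(c) as well as (a) to conclude that $h^{-1}.(S',\omega')$ lies in $Z^2(G,B)$ rather than merely in the larger set of (a).
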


\begin{proof}
\,\,\,(a) From Proposition \ref{realization of TNCT^B from factor systems II} we know that each $(G,B)$-crossed product algebra $A$ is equivalent to one of the form $A_{(S',\omega')}$. If $[S]=[S']$ and the element $h\in C^1(G,B^{\times})$ satisfies $h.S=S'$, then $h^{-1}.(S',\omega')=(S,h^{-1}\ast_{S'}\omega')$, so that $\omega'':=h^{-1}\ast_{S'}\omega'$ implies that $A_{(S',\omega')}$ and $A_{(S,\omega'')}$ are equivalent. This means that each extension class in $\Ext(G,B)_{[S]}$ can be represented by a $(G,B)$-crossed product algebra of the form $A_{(S,\omega'')}$.

(b) If $(S,\omega)$ and $(S,\omega')$ are two factor systems for the pair $(G,B)$, then $C_B\circ\omega=\delta_S=C_B\circ\omega'$ implies $\beta:=\omega'\cdot\omega^{-1}\in C^2(G,Z(B)^{\times})$. Further $d_S\omega'=d_S\omega=1_B$, so that 
\[1_B=d_S\omega'=d_S\omega\cdot d_S\beta=d_S\beta
\]implies $\beta\in Z^2(G,Z(B)^{\times})_{[S]}$ and hence the assertion. %This means that $\omega'\in\omega\cdot Z^2(G,Z(B)^{\times})_{[S]}$.

(c) ($``\Rightarrow"$) In view of Proposition \ref{equ  ATNCT^nB same class} and Proposition \ref{realization of TNCT^B from factor systems IV}, the equivalence of the $(G,B)$-crossed product algebras $A_{(S,\omega)}$ and $A_{(S,\omega')}$is equivalent to the existence of an element $h\in C^1(G,B^{\times})$ with
\[S=h.S=(C_B\circ h)\cdot S\,\,\,\text{and}\,\,\,\omega'=h\ast_S\omega.
\]Then $C_B\circ h=\id_B$ implies that $h\in C^1(G,Z(B)^{\times})$ which in turn immediately leads to $\omega'=h\ast_S\omega=(d_Sh)\cdot\omega$, i.e., 
\[\omega'\omega^{-1}\in B^2(G,Z(B)^{\times})_{[S]}.
\]

($``\Leftarrow"$) If conversely $\omega'\omega^{-1}=d_Sh$ for some $h\in C^1(G,Z(B)^{\times})$, then we easily conclude that $h.S=S$ and $\omega'=h\ast_S\omega$.
\end{proof}

\begin{theorem}\label{class of Z-kernels II}
Let $G$ be a group and $B$ be a unital algebra. Further, let $[S]$ be a $G$-kernel with $\Ext(G,B)_{[S]}\neq\emptyset$. Then the map
\[H^2(G,Z(B)^{\times})_{[S]}\times\Ext(G,B)_{[S]}\rightarrow\Ext(G,B)_{[S]}
\]given by
\[([\beta],[A_{(S,\omega)}])\mapsto[A_{(S,\omega\cdot\beta)}]
\]is a well-defined action which is both transitive and free.
\end{theorem}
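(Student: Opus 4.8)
The plan is to verify the three properties in turn---well-definedness, transitivity, freeness---using the machinery already assembled in Proposition \ref{class of Z-kernels I}.

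\textbf{Well-definedness.} First I would check that the formula makes sense. Given $[\beta]\in H^2(G,Z(B)^{\times})_{[S]}$ and a class $[A_{(S,\omega)}]\in\Ext(G,B)_{[S]}$, I need to see (i) that $A_{(S,\omega\cdot\beta)}$ is again a $(G,B)$-crossed product algebra representing an element of $\Ext(G,B)_{[S]}$, and (ii) that the output class is independent of the chosen representatives $\beta$ and $\omega$. For (i): since $\beta\in Z^2(G,Z(B)^{\times})_{[S]}$ we have $C_B\circ\beta=\id_B$ and $d_S\beta=1_B$, hence $C_B\circ(\omega\cdot\beta)=C_B\circ\omega=\delta_S$ and $d_S(\omega\cdot\beta)=d_S\omega\cdot d_S\beta=1_B$, so $(S,\omega\cdot\beta)\in Z^2(G,B)$ is a genuine factor system with the same $G$-kernel $[S]$; Construction \ref{realization of TNCT^B from factor systems I} then produces $A_{(S,\omega\cdot\beta)}$. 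For (ii): if $\omega'$ is another representative of $[A_{(S,\omega)}]$ with the same kernel, Proposition \ref{class of Z-kernels I}(a) lets me normalize to the same $S$, and then Proposition \ref{class of Z-kernels I}(c) gives $\omega'\omega^{-1}\in B^2(G,Z(B)^{\times})_{[S]}$, so $\omega'\beta'$ and $\omega\beta$ differ by a coboundary times $\beta'\beta^{-1}$; replacing $\beta$ by a cohomologous $\beta'$ changes $\beta'\beta^{-1}$ by a coboundary as well. In both cases $\omega\beta$ and $\omega'\beta'$ differ by an element of $B^2(G,Z(B)^{\times})_{[S]}$, so again by Proposition \ref{class of Z-kernels I}(c) the algebras $A_{(S,\omega\beta)}$ and $A_{(S,\omega'\beta')}$ are equivalent. (It is also routine that $[\mathbf{1}]$ acts trivially and the action respects the group law of $H^2$, since multiplication of cocycles is associative.)

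\textbf{Transitivity and freeness.} For transitivity, let $[A_1],[A_2]\in\Ext(G,B)_{[S]}$. By Proposition \ref{class of Z-kernels I}(a) I may write $A_1=A_{(S,\omega_1)}$ and $A_2=A_{(S,\omega_2)}$ with the \emph{same} outer action $S$. Then Proposition \ref{class of Z-kernels I}(b) yields $\beta:=\omega_2\cdot\omega_1^{-1}\in Z^2(G,Z(B)^{\times})_{[S]}$, and by construction $[\beta].[A_{(S,\omega_1)}]=[A_{(S,\omega_1\cdot\beta)}]=[A_{(S,\omega_2)}]=[A_2]$, which is transitivity. For freeness, suppose $[\beta].[A_{(S,\omega)}]=[A_{(S,\omega)}]$, i.e.\ $A_{(S,\omega\cdot\beta)}$ and $A_{(S,\omega)}$ are equivalent. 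By Proposition \ref{class of Z-kernels I}(c) this means $(\omega\cdot\beta)\cdot\omega^{-1}=\beta\in B^2(G,Z(B)^{\times})_{[S]}$, hence $[\beta]=0$ in $H^2(G,Z(B)^{\times})_{[S]}$. Thus the stabilizer of every element is trivial, which is freeness.

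\textbf{Main obstacle.} There is no deep difficulty here: the theorem is essentially a bookkeeping consequence of Proposition \ref{class of Z-kernels I}, whose parts (a), (b), (c) do all the real work (they are the translation between equivalence of crossed product algebras and the cocycle/coboundary calculus for $Z(B)^{\times}$). The one point that needs genuine care is well-definedness, specifically making sure that \emph{both} the freedom in choosing $\beta$ within its cohomology class \emph{and} the freedom in choosing $\omega$ within its equivalence class (including re-normalizing the outer action via Proposition \ref{class of Z-kernels I}(a)) are absorbed into $B^2(G,Z(B)^{\times})_{[S]}$; once that is done, everything else is an immediate rephrasing of parts (b) and (c).
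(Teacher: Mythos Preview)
Your proof is correct and follows exactly the approach of the paper, which simply states that the theorem follows directly from Proposition \ref{class of Z-kernels I}; you have merely spelled out the details of how parts (a), (b), and (c) of that proposition yield well-definedness, transitivity, and freeness respectively.
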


\begin{proof}
\,\,\,This follows directly form Proposition \ref{class of Z-kernels I}.
\end{proof}

\begin{remark}\label{B abelian}(Commutative fixed point algebras). Let $G$ be a group and suppose that $B$ is a commutative unital algebra. Then the adjoint representation of $B$ is trivial and a factor system $(S,\omega)$ for $(G,B)$ consists of a module structure given by a homomorphism $S:G\rightarrow \Aut(B)$ and an element $\omega\in C^2(G,B^{\times})$ satisfying \mbox{$d_S\omega=1_B$} (which is equivalent to $\omega\in Z^2(G,B^{\times})_{[S]}$). In this case we simply write 
$A_{\omega}$ for the corresponding $(G,B)$-crossed product algebra. 

Further $S\sim S'$ if and only if $S=S'$. Hence a $G$-kernel $[S]$ is the same as a $G$-module structure $S$ on $B$ and $\Ext(G,B)_S:=\Ext(G,B)_{[S]}$ is the set of all $(G,B)$-crossed product algebras corresponding to the $G$-module structure on $B$ given by $S$.

According to Theorem \ref{class of Z-kernels II}, these equivalence classes correspond to cohomology classes of cocycles, so that the map
\[H^2(G,B^{\times})_S\rightarrow\Ext(G,B)_S,\,\,\,[\omega]\mapsto[A_{\omega}]
\]is a well-defined bijection.
\end{remark}

We now give a condition that ensures the non-emptiness of the set $\Ext(G,B)_{[S]}$ for a given $G$-kernel $S$. We first need the following definition:

\begin{definition}\label{char. class d_Sw}
Let $G$ be a group and $B$ be a unital algebra. Further, let $S$ be an outer action of $G$ on $B$ and choose $\omega\in C^2(G,B^{\times})$ with $\delta_S=C_B\circ\omega$. The cohomology class
\[\nu(S):=[d_S\omega]\in H^3(G,Z(B)^{\times})_S
\]does not depend on the choice of $\omega$ and is constant on the equivalence class of $S$, so that we may also write $\nu([S]):=\nu(S)$ (cf. \cite[Corollary 7.3.25 for $G=\mathbb{Z}^n$]{Wa11b}). We call $\nu(S)$ the \emph{characteristic class} of $S$.
\end{definition}

The next theorem provides a group theoretic criterion for the non-emptiness of the set $\Ext(G,B)_{[S]}$:

\begin{theorem}\label{Ext(Z^n,B)_S non-empty}
Let $G$ be a group and $B$ be a unital algebra. If $[S]$ is a $G$-kernel, then
\[\nu([S])={\bf 1}\,\,\,\Leftrightarrow\,\,\,\Ext(G,B)_{[S]}\neq\emptyset.
\]
\end{theorem}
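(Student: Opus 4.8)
The plan is to prove the equivalence by unwinding the definition of the characteristic class $\nu([S]) = [d_S\omega] \in H^3(G,Z(B)^\times)_S$ and relating the vanishing of this cohomology class to the existence of a factor system with the prescribed $G$-kernel $[S]$. The key point is that by Lemma~\ref{action on factor system}(b), whenever $S$ is an outer action and $\omega \in C^2(G,B^\times)$ satisfies $\delta_S = C_B\circ\omega$, the $3$-cochain $d_S\omega$ automatically takes values in $Z(B)^\times$; moreover a standard computation (the ``associativity pentagon'' for the twisted multiplication, or equivalently the cocycle identity for $d_S$) shows $d_S\omega \in Z^3(G,Z(B)^\times)_S$. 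So $\nu([S])$ is genuinely a class in $H^3$, and the content of the theorem is precisely that this obstruction class vanishes exactly when one can correct $\omega$ to a genuine factor system.

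For the direction $\nu([S]) = \mathbf{1} \Rightarrow \Ext(G,B)_{[S]} \neq \emptyset$: choose any outer action $S$ representing the $G$-kernel $[S]$ and any $\omega \in C^2(G,B^\times)$ with $\delta_S = C_B\circ\omega$. By hypothesis $[d_S\omega] = \mathbf{1}$ in $H^3(G,Z(B)^\times)_S$, so there exists $\beta \in C^2(G,Z(B)^\times)$ with $d_S\beta = d_S\omega$ (using the additive/multiplicative convention for $d_S$ on $Z(B)^\times$-valued cochains, which agrees with the restriction of the twisted coboundary). Set $\omega' := \omega\cdot\beta^{-1}$. Since $\beta$ is central, $C_B\circ\omega' = C_B\circ\omega = \delta_S$, and $d_S\omega' = d_S\omega \cdot (d_S\beta)^{-1} = 1_B$; here I would invoke the fact, implicit in the proof of Proposition~\ref{class of Z-kernels I}(b), that $d_S(\omega\cdot\beta) = d_S\omega \cdot d_S\beta$ when $\beta$ is $Z(B)^\times$-valued. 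Hence $(S,\omega') \in Z^2(G,B)$ is a factor system, and by Construction~\ref{realization of TNCT^B from factor systems I} the algebra $A_{(S,\omega')}$ is a $(G,B)$-crossed product algebra with $G$-kernel $[S]$, so $\Ext(G,B)_{[S]} \neq \emptyset$.

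For the converse $\Ext(G,B)_{[S]} \neq \emptyset \Rightarrow \nu([S]) = \mathbf{1}$: suppose $A$ is a $(G,B)$-crossed product algebra with characteristic $G$-kernel $[S]$. By Proposition~\ref{realization of TNCT^B from factor systems III} we may take $A = A_{(S',\omega')}$ for a factor system $(S',\omega') \in Z^2(G,B)$ with $[S'] = [S]$; in particular $d_{S'}\omega' = 1_B$, so $\nu([S']) = [d_{S'}\omega'] = [\,1_B\,] = \mathbf{1}$. Since $\nu$ depends only on the equivalence class of the outer action (as asserted in Definition~\ref{char. class d_Sw}, citing the $G=\mathbb{Z}^n$ case in \cite{Wa11b}), we get $\nu([S]) = \nu([S']) = \mathbf{1}$. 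The main obstacle I anticipate is bookkeeping: making sure the additive-versus-multiplicative conventions for $d_S$ on $B^\times$-valued versus $Z(B)^\times$-valued cochains are consistent, and confirming that ``$\nu$ is constant on the $G$-kernel'' (Definition~\ref{char. class d_Sw}) is legitimately available here rather than being something that needs re-proving for general $G$ — but the excerpt explicitly grants this, so the argument reduces to the two short computations above.
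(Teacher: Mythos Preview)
Your proof is correct and follows essentially the same approach as the paper: in both directions you correct the cochain $\omega$ by a central $2$-cochain to produce a genuine factor system (forward direction), and invoke Proposition~\ref{realization of TNCT^B from factor systems III} together with $d_S\omega = 1$ for a factor system (converse). Your handling of the converse is in fact slightly more careful than the paper's, which tacitly assumes the representative can be taken with the given $S$ rather than merely some $S'$ with $[S']=[S]$; you make this step explicit by invoking the invariance of $\nu$ on the $G$-kernel.
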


\begin{proof}
\,\,\,($``\Leftarrow"$) If there exists a$(G,B)$-crossed product algebra $A$ corresponding to $[S]$, then Proposition \ref{realization of TNCT^B from factor systems III} implies that we may w.l.o.g. assume that it is of the form $A_{(S,\omega)}$. In particular, this means that $d_S\omega=1_B$. Hence, we obtain $\nu([S])=[d_S\omega]=\textbf{1}$.

($``\Rightarrow"$) Suppose, conversely, that $\nu([S])=[d_S\omega]=\textbf{1}$. Then there exists an element $\omega\in C^2(G,B^{\times})$ with $\delta_S=C_B\circ\omega$ and some $h\in C^2(G,Z(B)^{\times})$ with $d_S\omega=d_Sh^{-1}$. Therefore $\omega':=\omega\cdot h\in C^2(G,B^{\times})$ satisfies 
\[d_S\omega'=d_S\omega\cdot d_Sh=1\,\,\,\text{and}\,\,\,\delta_S=C_B\circ\omega'.
\]Hence, $(S,\omega')$ is a factor system for $(G,B)$ and Proposition \ref{realization of TNCT^B from factor systems II} implies the existence of a $(G,B)$-crossed product algebra $A_{(S,\omega')}$ corresponding to the $G$-kernel $[S]$.
%Again, a proof of this statement can be found in \cite[Corollary 7.3.27]{Wa11b}.
\end{proof}

\section*{A Landstad duality theorem for crossed product algebras}

In the following let $G$ be a compact abelian group. In the last part of this appendix we present a Landstad duality theorem for $C^*$-dynamical systems $(A,G,\alpha)$ with the property that each isotypic component contains an invertible element. We did not find such a result in the literature.

\begin{notation}
Given a $C^*$-dynamical system $(A,G,\alpha)$, we write 
\[A_{\varphi}:=\{a\in A:\,(\forall g\in G)\,\alpha(g,a)=\varphi(g)\cdot a\}
\]for the isotypic component corresponding to an element $\varphi$ in the dual group $\widehat{G}$ of $G$. In particular, we write $B:=A^G=A_{\bf 0}$ for the corresponding fixed point algebra. 
\end{notation}

\begin{lemma}\label{landstad I}
Let $G$ be a compact abelian group and $(A,G,\alpha)$ be a $C^*$-dynamical system such that each isotypic component contains an invertible element. Then each isotypic component also contains a unitary element.
\end{lemma}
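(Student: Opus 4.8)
The plan is to start from an arbitrary invertible element $a\in A_\varphi$ and use the polar decomposition together with functional calculus inside the $C^*$-algebra $A$ to produce a unitary in the same isotypic component. First I would write $a = u\lvert a\rvert$ with $\lvert a\rvert = (a^*a)^{1/2}$; since $a$ is invertible, $\lvert a\rvert$ is a positive invertible element of $A$, and $u = a\lvert a\rvert^{-1}$ is a genuine unitary of $A$. The key point is to check that both factors behave well under the action $\alpha$. Since $a\in A_\varphi$ and each $\alpha(g)$ is a $*$-automorphism, we get $\alpha(g).a^* = \overline{\varphi(g)}\,a^*$, hence $\alpha(g).(a^*a) = \overline{\varphi(g)}\varphi(g)\,a^*a = a^*a$ because $\varphi(g)\in\mathbb{T}$. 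Thus $a^*a\in B = A^G$, and by continuity of the functional calculus and $G$-equivariance, $\lvert a\rvert = (a^*a)^{1/2}\in B$ as well (it is the norm limit of polynomials in $a^*a$, each of which lies in $B$); the same argument applied to the resolvent gives $\lvert a\rvert^{-1}\in B$.

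Next I would verify that $u = a\lvert a\rvert^{-1}$ lies in $A_\varphi$: for each $g\in G$,
\[
\alpha(g).u = (\alpha(g).a)(\alpha(g).\lvert a\rvert^{-1}) = (\varphi(g)\,a)(\lvert a\rvert^{-1}) = \varphi(g)\cdot u,
\]
using that $\lvert a\rvert^{-1}\in B$ is fixed by $\alpha$. So $u$ is a unitary element of $A$ lying in the isotypic component $A_\varphi$, which is exactly what is claimed. Finally, since every $\varphi\in\widehat G$ arises here by hypothesis (for each $\varphi$ we are told $A_\varphi$ contains an invertible element), this shows each isotypic component contains a unitary.

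The only genuinely delicate point is the step asserting that the positive square root $\lvert a\rvert$ and its inverse remain in the fixed point algebra $B$; this is where the completeness of $A$ as a $C^*$-algebra is essential (so that continuous functional calculus is available) and where one uses that $B$ is a closed $\alpha$-invariant $*$-subalgebra, hence itself a $C^*$-algebra containing $a^*a$ and therefore containing $f(a^*a)$ for every continuous $f$ on the spectrum, in particular $f(t) = t^{1/2}$ and $f(t) = t^{-1/2}$ on the spectrum of the invertible positive element $a^*a$. Everything else is a routine bookkeeping of how $\alpha$ interacts with the algebraic operations, and I expect no further obstacle; the proof should be quite short.
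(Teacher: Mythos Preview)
Your argument is correct and follows essentially the same route as the paper: take an invertible element in $A_\varphi$, form its polar decomposition, observe that the positive part lies in the fixed point algebra $B$ (since $a^*a$ is $G$-invariant and $B$ is a $C^*$-subalgebra closed under functional calculus), and conclude that the unitary factor lies in $A_\varphi$. The paper's proof is terser and writes the decomposition as $a_\varphi = \lvert a_\varphi\rvert u_\varphi$ rather than $a = u\lvert a\rvert$, but this is an immaterial variant; your additional justification of why $\lvert a\rvert$ and $\lvert a\rvert^{-1}$ stay in $B$ is a welcome elaboration of what the paper leaves implicit.
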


\begin{proof}
\,\,\,Let $\varphi\in\widehat{G}$ and $a_{\varphi}\in A_{\varphi}$ be an invertible element. Then we conclude from polar decomposition for $C^*$-algebras that $a_{\varphi}=\vert a_{\varphi}\vert u_{\varphi}$ for some positive element $\vert a_{\varphi}\vert$ and a unitary element $u_{\varphi}$. Further, the construction implies that $\vert a_{\varphi}\vert\in B$ and thus that $u_{\varphi}\in A_{\varphi}$ as desired. Hence, each isotypic component contains a unitary element.
\end{proof}

\begin{construction}\label{landstad II}
Let $G$ be a compact abelian group and $(A,G,\alpha)$ be a $C^*$-dynamical system such that each isotypic component contains an invertible element. According to Lemma \ref{landstad I}, we may choose in each isotypic component $A_{\varphi}$ a unitary element $u_{\varphi}$, which in turn leads to a factor system $(S,\omega)$ for the pair $(\widehat{G},B)$ (cf. \ref{factor system}) defined by

maps
\[S:\widehat{G}\rightarrow\Aut(B),\,\,\,S(\varphi).b:=u_{\varphi}bu_{\varphi}^*
\]and
\[\omega:\widehat{G}\times\widehat{G}\rightarrow\U(B),\,\,\,\omega(\varphi,\varphi'):=u_{\varphi}u_{\varphi'}u_{\varphi\cdot\varphi'}^*,
\]where $\U(B)$ denotes the unitary group of $B$. The corresponding algebra $A_{(S,\omega)}$ from Construction \ref{realization of TNCT^B from factor systems I} carries a natural involution given by
\[(bv_{\varphi})^{*}:=\omega(-\varphi,\varphi)^{-1}S(-\varphi).b^*v_{-\varphi}
\]and a short observation shows that multiplication and involution are continuous for the $\ell^1$-norm 
\[\Vert (b_{\varphi}v_{\varphi})_{\varphi\in\widehat{G}}\Vert_1:=\sum_{\varphi\in\widehat{G}}\Vert b_{\varphi}\Vert.
\]We write $\ell^1(A_{(S,\omega)})$ for the corresponding involutive Banach algebra and further $C^*(A_{(S,\omega)})$ for the enveloping $C^*$-algebra of $\ell^1(A_{(S,\omega)})$. 
\end{construction}

\begin{remark}
In view of the terminology of Example \ref{crossed product as example again} (cf. Example \ref{c-stern crossed product again}) we could also write $\ell^1(B\times_{(S,\omega)}\widehat{G})$ for $\ell^1(A_{(S,\omega)})$ and $C^*(B\times_{(S,\omega)}\widehat{G})$ for $C^*(A_{(S,\omega)})$.
\end{remark}

\begin{proposition}
Suppose we are in the situation of Construction \ref{landstad II}. Then $A$ is isomorphic \emph{(}as $C^*$-algebra\emph{)} to $C^*(A_{(S,\omega)})$.
\end{proposition}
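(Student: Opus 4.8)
The plan is to construct an explicit $C^*$-algebra isomorphism $\Phi: C^*(A_{(S,\omega)}) \to A$ and verify it has the required properties. First I would define a map on the dense subalgebra: working with the (algebraic) crossed product $A_{(S,\omega)} = \bigoplus_{\varphi \in \widehat{G}} B v_\varphi$, send $b v_\varphi \mapsto b u_\varphi$, where $u_\varphi \in A_\varphi$ is the chosen unitary from Construction \ref{landstad II}. A short calculation using the definition of the multiplication (\ref{multiplication}) and the factor system $(S,\omega)$ — with $S(\varphi).b = u_\varphi b u_\varphi^*$ and $\omega(\varphi,\varphi') = u_\varphi u_{\varphi'} u_{\varphi\varphi'}^*$ — shows this is a homomorphism: indeed $(b v_\varphi)(b' v_{\varphi'}) = b\, S(\varphi)(b')\, \omega(\varphi,\varphi')\, v_{\varphi\varphi'}$ maps to $b u_\varphi b' u_\varphi^* \cdot u_\varphi u_{\varphi'} u_{\varphi\varphi'}^* \cdot u_{\varphi\varphi'} = b u_\varphi b' u_{\varphi'}$, as desired. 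Likewise one checks compatibility with the involution $(bv_\varphi)^* = \omega(-\varphi,\varphi)^{-1} S(-\varphi).b^*\, v_{-\varphi}$, using $u_{-\varphi} = u_\varphi^*$ up to the obstruction captured by $\omega$, so that $*$ is preserved.

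Next I would address continuity and the passage to completions. On $\ell^1(A_{(S,\omega)})$ the map is contractive for the $\ell^1$-norm into $A$ with its $C^*$-norm, since $\Vert \sum b_\varphi u_\varphi \Vert_A \leq \sum \Vert b_\varphi u_\varphi \Vert_A = \sum \Vert b_\varphi \Vert$ using that each $u_\varphi$ is unitary and $\vert b_\varphi \vert \in B \subseteq A$. Hence it is a $*$-homomorphism of the Banach $*$-algebra $\ell^1(A_{(S,\omega)})$ into the $C^*$-algebra $A$, and by the universal property of the enveloping $C^*$-algebra it extends to a $*$-homomorphism $\Phi : C^*(A_{(S,\omega)}) \to A$. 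Its image is a $C^*$-subalgebra of $A$ containing every $u_\varphi$ and all of $B$; since the isotypic components $A_\varphi$ are spanned (as a $B$-module, densely) by elements of the form $b u_\varphi$ with $b \in B$, and these lie in the image, the image is dense, hence all of $A$. So $\Phi$ is surjective.

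The main obstacle I expect is injectivity, because a $*$-homomorphism of $C^*$-algebras need not be isometric unless it is injective, and surjectivity alone does not force that. The clean route is to use the dual $G$-action: both $C^*(A_{(S,\omega)})$ and $A$ carry natural $G$-actions (on the crossed product side, $G$ acts on $v_\varphi$ by the scalar $\varphi(g)$, i.e. the action $\widehat{\alpha}$ from Example \ref{c-stern crossed product again}), and $\Phi$ is $G$-equivariant by construction since $\Phi(b v_\varphi) = b u_\varphi \in A_\varphi$. An equivariant surjection between $C^*$-dynamical systems which restricts to an isomorphism on fixed-point algebras (here both fixed-point algebras are $B$, and $\Phi|_B = \id_B$) and is injective on each isotypic component is an isomorphism; the injectivity on isotypic components follows because $\Phi$ maps $B v_\varphi$ isometrically onto $B u_\varphi = A_\varphi$ (as $u_\varphi$ is unitary, $\Vert b u_\varphi \Vert_A = \Vert b \Vert_B$), and a general element of $C^*(A_{(S,\omega)})$ in the kernel, after averaging over $G$ against characters, has all isotypic pieces in the kernel, hence is zero. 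Assembling these observations gives that $\Phi$ is a $C^*$-isomorphism, completing the proof.
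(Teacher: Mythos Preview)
Your argument is correct and proceeds by a genuinely different route than the paper. The paper observes that $A$ and $C^*(A_{(S,\omega)})$ are topologically $\widehat{G}$-graded $C^*$-algebras arising from isomorphic Fell bundles over the discrete abelian group $\widehat{G}$; since $\widehat{G}$ is amenable, Exel's result \cite[Proposition 4.2]{Ex96} identifies the full cross-sectional $C^*$-algebra with $A$ in one stroke. You instead build the map $\Phi$ explicitly, get surjectivity from density of the isotypic decomposition, and---this is the real point of departure---prove injectivity by exploiting the \emph{dual} compact $G$-action $\widehat{\alpha}$ on $C^*(A_{(S,\omega)})$ rather than amenability of $\widehat{G}$: because $G$ is compact, the Fourier projections $P_{\varphi}(x)=\int_G\overline{\varphi(g)}\,\widehat{\alpha}(g)(x)\,dg$ exist and are jointly faithful on any $C^*$-algebra with a continuous $G$-action, so an element of $\ker\Phi$ has vanishing isotypic components and hence vanishes. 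Your approach is more elementary and self-contained (no Fell-bundle machinery needed), while the paper's approach places the statement conceptually within the general full--versus--reduced coincidence for amenable Fell bundles. One small point worth making explicit in your write-up is that the $\varphi$-isotypic component of $C^*(A_{(S,\omega)})$ really is $Bv_{\varphi}$ (not something larger), which follows because $P_{\varphi}$ is norm-continuous and $B$ is already complete; you use this implicitly when invoking the isometry $\Vert bv_{\varphi}\Vert=\Vert b\Vert$ on isotypic pieces.
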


\begin{proof}
\,\,\,The assertion immediately follows by extending the (algebraic) isomorphism of Proposition \ref{realization of TNCT^B from factor systems III} to an $C^*$-algebraic isomorphism of the corresponding completions. Alternatively, the assertion follows from \cite[Proposition 4.2]{Ex96}.

%Since $\widehat{G}$ is amenable and both algebras are topologically graded (cf. \cite[Definition 3.4]{Ex96}), the assertion follows from \cite[Proposition 4.2]{Ex96}. In fact, a short observation shows that $A$ and $C^*(A_{(S,\omega)})$ generate isomorphic Fell bundles (also called $C^*$-algebraic bundles, cf. \cite{DoFe88}).
\end{proof}

\section*{Acknowledgements} 
We thank Siegfried Echterhoff, Karl-Hermann Neeb and Ryszard Nest for fruitful discussions on this topic. Moreover, we would like to thank the anonymous referee for useful comments.

\end{document}